
\documentclass[12pt]{article}




\usepackage{amssymb}

\usepackage{amsmath}

\usepackage{amsthm}

\usepackage{mathrsfs}

\usepackage{url}





\newcommand{\N}{\mathbb{N}}
\newcommand{\Z}{\mathbb{Z}}
\newcommand{\Q}{\mathbb{Q}}
\newcommand{\R}{\mathbb{R}}
\newcommand{\C}{\mathbb{C}}

\renewcommand{\b}{\mathfrak{b}}
\newcommand{\bbar}{\overline{\mathfrak{b}}}
\newcommand{\g}{\mathfrak{g}}
\newcommand{\h}{\mathfrak{h}}
\newcommand{\n}{\mathfrak{n}}
\newcommand{\nbar}{\overline{\mathfrak{n}}}

\newcommand{\Nbar}{\overline{N}}
\newcommand{\Bbar}{\overline{B}}

\newcommand{\tensor}{\otimes}

\newcommand{\comment}[1]{}

\DeclareMathOperator{\Hom}{Hom}
\DeclareMathOperator{\Ind}{Ind}
\DeclareMathOperator{\Lie}{Lie}

\newtheorem{theorem}{Theorem}
\newtheorem{proposition}[theorem]{Proposition}
\newtheorem{corollary}[theorem]{Corollary}
\newtheorem{lemma}[theorem]{Lemma}
\newtheorem{definition}[theorem]{Definition}

\newtheorem*{remark}{Remark}

\newtheorem*{maintheorem1}{Theorem~\ref{laindEScor}}
\newtheorem*{maintheorem2}{Theorem~\ref{LoefflerES}}

\newcommand{\verma}[2]{M_{#1}(#2)}

\newcommand{\eps}{\varepsilon}
\newcommand{\env}{\mathcal{U}}

\newcommand{\cato}{\mathcal{O}}
\newcommand{\catobar}{\overline{\mathcal{O}}}

\newcommand{\an}{\text{an}}
\newcommand{\cs}{\text{c}}
\newcommand{\cl}{\text{cl}}
\newcommand{\la}{\text{la}}
\newcommand{\lp}{\text{lp}}
\newcommand{\pol}{\text{pol}}

\newcommand{\sm}{\text{sm}}

\DeclareMathOperator{\ad}{ad}
\DeclareMathOperator{\anInd}{an-Ind}
\DeclareMathOperator{\im}{im}
\DeclareMathOperator{\smInd}{sm-Ind}
\DeclareMathOperator{\supp}{Supp}
\DeclareMathOperator{\Mod}{Mod}

\begin{document}




\title{An analogue of the BGG resolution for locally analytic principal series}


\author{Owen T. R. Jones}

\maketitle

\section*{Abstract}
Let $\mathbf{G}$ be a connected reductive quasisplit algebraic group over a field $L$ which is a finite extension of the $p$-adic numbers. We construct an exact sequence modelled on (the dual of) the BGG resolution involving locally analytic principal series representations for $\mathbf{G}(L)$. This leads to an exact sequence involving spaces of overconvergent $p$-adic automorphic forms for certain groups compact modulo centre at infinity.



\section{Introduction} \label{sintro}

Locally analytic representation theory is the study of a certain class of representations of $L$-analytic groups over $K$, where $L$ is a finite extension of $\Q_p$ and $K$ is a spherically complete extension of $L$. It was systematically developed by Schneider and Teitelbaum in papers such as \cite{schnteit2}, \cite{schnteit}, \cite{schnteit4} and \cite{schnteit5}. It plays an important role in the $p$-adic local Langlands correspondence for $GL_2 (\Q_p)$.

It also has applications to overconvergent $p$-adic automorphic forms. For connected reductive groups which are compact modulo centre at infinity, spaces of overconvergent $p$-adic automorphic forms are defined by Loeffler in \cite{Loeffler} in terms of functions from a certain set to a locally analytic principal series representation for an Iwahori subgroup.

Let $G$ be the group of $L$-points of a connected reductive linear quasisplit algebraic group defined over $L$, $B$ and $\Bbar$ opposite Borels in $G$, $G_1$ a subgroup of $G$ admitting an Iwahori factorisation, such as an Iwahori subgroup, and $\Bbar_1 = \Bbar \cap G_1$. In this paper we study maps between locally analytic principal series $\Ind_{\Bbar_1}^{G_1} (\mu)$ for $G_1$, where $\Ind$ denotes locally analytic induction over $K$, which we assume is complete with respect to a  discrete valuation. Our approach is to exploit an isomorphism between $\Ind_{\Bbar}^{G} (\mu)(N)$, the subspace of functions in $\Ind_{\Bbar}^{G} (\mu)$ with support in $\Bbar N$, where $N$ is the unipotent radical of $B$, and $\mathcal{C}^{\la}_{\cs} (N, K_{\mu})$, the space of locally analytic functions $N \to K_{\mu}$ with compact support. Using the dense subspace $\mathcal{C}^{\lp}_{\cs} (N, K_{\mu}) \subseteq \mathcal{C}^{\la}_{\cs} (N, K_{\mu})$, which we call the space of locally polynomial functions, we prove the following theorem.

\begin{maintheorem1}
We have an exact sequence of $M$-representations
\begin{multline*}
0 \to V \tensor_{K} \smInd_{\Bbar_1}^{G_1} (\mathbf{1}) \to \Ind_{\Bbar_1}^{G_1} (\lambda) \to \bigoplus_{w \in W^{(1)}} \Ind_{\Bbar_1}^{G_1} (w \cdot \lambda) \to \\
\cdots \to \bigoplus_{w \in W^{(i)}} \Ind_{\Bbar_1}^{G_1} (w \cdot \lambda) \to \cdots \to \Ind_{\Bbar_1}^{G_1} (w_0 \cdot \lambda) \to 0.
\end{multline*}
coming from the BGG resolution for $V^{*}$.
\end{maintheorem1}

Here $V$ is the irreducible finite-dimensional algebraic representation of $G$ with highest weight $\lambda$, $\smInd$ is smooth induction, $\mathbf{1}$ is the trivial character, $W^{(i)}$ denotes the elements of the Weyl group of length $i$, $w_0$ is the longest element of the Weyl group and $w \cdot \lambda = w (\lambda + \rho) - \rho$ where $\rho$ is half the sum of the positive roots.

By taking $G_1$ to be an Iwahori subgroup, we can use Theorem~\ref{laindEScor} to construct the analogous exact sequence for locally analytic principal series $\Ind_{\Bbar}^{G} (\mu)$ for $G$, which has been established by quite different methods in \cite{OrlikStrauch2}. Another consequence of Theorem~\ref{laindEScor} is the following exact sequence between spaces $M(e, K_{\mu})$ of overconvergent $p$-adic automorphic forms of weight $\mu$ for a group compact modulo centre at infinity whose group of $L$-points is $G$.

\begin{maintheorem2}
If $\lambda \in X(\mathbf{T})$ is a dominant integral arithmetical character then we have an exact sequence
\begin{multline*}
0 \to M(e, K_{\lambda})^{\text{cl}} \to M(e, K_{\lambda}) \to \bigoplus_{w \in W^{(1)}} M(e, K_{w \cdot \lambda}) \to \\
\cdots \to \bigoplus_{w \in W^{(i)}} M(e, K_{w \cdot \lambda}) \to \cdots \to M(e, K_{w_0 \cdot \lambda}) \to 0
\end{multline*}
\end{maintheorem2}

Here $M(e, K_{\lambda})^{\text{cl}}$ denotes the so-called classical subspace. After the first inclusion, the maps in this exact sequence are constructed from maps of the form $\theta_{\alpha, w \cdot \lambda}^{\text{aut}}: M(e, K_{w \cdot \lambda}) \to M(e, K_{s_{\alpha} w \cdot \lambda})$, where $w \in W$ and $\alpha \in \Phi^{+}$ satisfy $l(s_{\alpha} w) = l(w) + 1$. These are the analogue of the maps $\theta^{k - 1}$ from \cite{Colemanclassover} between the spaces of overconvergent $p$-adic modular forms of weight $2 - k$  and $k$. In \cite{Colemanclassover}, Coleman used $\theta^{k - 1}$ to prove a sufficient condition for classicality in terms of small slope. Using Theorem~\ref{LoefflerES} we can establish a necessary and sufficient condition for belonging to the classical subspace.

\subsection{The structure of the paper}

In \S\ref{ssubspaces} we define certain subspaces of locally analytic principal series in which we will be interested. In \S\ref{sliealg} we establish results about representations of a split semisimple Lie algebra, including the exactness of a certain duality functor. In \S\ref{sdefmaps} we use maps between Verma modules to construct maps between particular subspaces of locally analytic principal series. We use the BGG resolution to construct a sequence of $\env(\g)$-modules involving these subspaces, for $G$ semisimple in \S\ref{sBGG} and for $G$ reductive in \S\ref{sredgps}. We then show that the first three terms of this sequence are exact in \S\ref{sfirstthreeterms}. In \S\ref{slocan} we build on this to prove Theorem~\ref{laindEScor}, and deduce the exactness of the original sequence. We prove the analogue of Theorem~\ref{laindEScor} involving locally analytic principal series for $G$ rather than a subgroup with an Iwahori factorisation in \S\ref{sG}. In \S\ref{san} we turn our attention to analytic principal series and prove that the first three terms of the exact sequence in Theorem~\ref{laindEScor} remain exact when we restrict to analytic principal series.

Finally, we give some applications of our results to overconvergent $p$-adic automorphic forms for groups compact modulo centre at infinity. In \S\ref{sappi} we briefly sketch the definition of overconvergent $p$-adic automorphic forms given by Chenevier in \cite{Chenevierfern} and prove a three-term exact sequence involving certain spaces of overconvergent automorphic forms. This material is contained in \cite{Chenevierfern}, citing an earlier version of this paper, but is included here for completeness. In \S\ref{sappii} we briefly outline the definition of overconvergent $p$-adic automorphic forms given by Loeffler in \cite{Loeffler} and prove Theorem~\ref{LoefflerES}.

\subsection{Notation} \label{ssnotation}

Fix a prime $p$. Let $L$ be a finite extension of $\Q_p$ and  let $K$ be an extension of $L$ which is complete with respect to a discrete valuation (by Lemma~1.6 in \cite{schnfunctionalanalysis} this means it is spherically complete).

Let $\mathbf{G}$ be a connected reductive linear algebraic group defined over $L$ which is quasi-split over $L$ and split over $K$. Choose a Borel subgroup $\mathbf{B}$ which is defined over $L$. Write $\mathbf{N}$ for its unipotent radical (which is defined over $L$). Choose a maximal $L$-split torus in $\mathbf{B}$ and let $\mathbf{T}$ be its centraliser in $\mathbf{G}$. Then $\mathbf{T}$ is a Levi factor in $\mathbf{B}$ and a maximal torus in $\mathbf{G}$ which is defined over $L$. It is not necessarily split over $L$, but by assumption it splits over $K$. Let $\mathbf{\Bbar}$ denote the opposite Borel to $\mathbf{B}$ containing $\mathbf{T}$ and $\mathbf{\Nbar}$ its unipotent radical.

We write $G$ for $\mathbf{G}(L)$. We use bold letters to denote algebraic subgroups of $\mathbf{G}$. For any algebraic subgroup $\mathbf{J}$ of $\mathbf{G}$ defined over $L$ we write $J$ to denote $\mathbf{J}(L)$ and the lower case gothic letter $\mathfrak{j}$ to represent the corresponding Lie subalgebra of $\g = \Lie(G)$. The sole exception is that we will denote the Lie algebra of $T$ by $\h$, which is the standard notation in Lie algebra representation theory. Given a Lie algebra $\mathfrak{a}$ we write $\env(\mathfrak{a})$ for the universal enveloping algebra of $\mathfrak{a}$. Representations of $\mathfrak{a}$ are equivalent to $\env(\mathfrak{a})$-modules, and we use the two terms interchangeably. We write $S : \env(\mathfrak{a}) \to \env(\mathfrak{a})$ for the principal anti-automorphism of $\env(\mathfrak{a})$, given on monomials by $X_1 \cdots X_r \mapsto (-1)^r X_r \cdots X_1$. This is the unique algebra anti-automorphism of $\env(\mathfrak{a})$ extending $\mathfrak{a} \to \mathfrak{a}$, $X \mapsto -X$.

Let $\Phi$ denote the set of all roots of $\mathbf{G}$, $\Phi^{+} \subseteq \Phi$ the set of positive roots determined by our choice of $\mathbf{B}$ and $\Delta \subseteq \Phi^{+}$ the corresponding set of simple roots. Set $\Phi^{-} = \{ -\alpha: \alpha \in \Phi^{+} \}$. For each $\alpha \in \Phi^{+}$ let $H_{\alpha} \in \h$ denote its coroot and fix a non-zero element $E_{\alpha} \in \g_{\alpha}$. This determines a unique $F_{\alpha} \in \g_{-\alpha}$ such that $[E_{\alpha}, F_{\alpha}] = H_{\alpha}$.

Let $X(\mathbf{T}) = \Hom (\mathbf{T}, \mathbb{G}_m)$, with the group law written additively. Let $\h^{*}$ be the space of Lie algebra homomorphisms from $\h$ to $K$. Any $\mu \in X(\mathbf{T})$ gives an element in $\h^{*}$ by evaluating at $K$-points, restricting to $T$ and then differentiating at the origin. We denote this element again by $\mu$. We are only interested in elements of $\h^{*}$ coming from $X(\mathbf{T})$.

Let $W$ denote the Weyl group of $\mathbf{G}$, $W^{(i)}$ the subset of elements of length $i$ under the Bruhat ordering given by our choice of positive roots and $w_0$ the longest element of $W$. Let $\rho$ be half the sum of the positive roots. There is a natural action of $W$ on $X(\mathbf{T})$, and we define the affine action of $w \in W$ on $\mu \in X(\mathbf{T})$ by $w \cdot \mu = w(\mu + \rho) - \rho$. We define the affine action of $W$ on $\h^{*}$ similarly.

Let $\lambda \in X(\mathbf{T})$ be a dominant weight and let $\sigma: \mathbf{G} \to \mathbf{GL}_{s}$ be the irreducible finite-dimensional representation of $\mathbf{G}$ with highest weight $\lambda$ . As $\mathbf{T}$ is split over $K$, $\sigma$ is defined over $K$. Let $V$ denote $K^{s}$ with the action of $G$ given by $\sigma: G \to \mathbf{GL}_{s} (K)$.

Suppose $X$ is a paracompact locally $L$-analytic manifold and $U$ a $K$-vector space. We write $\mathcal{C}^{\la} (X, U)$ for the space of locally $L$-analytic functions from $X$ to $K$, and $\mathcal{C}^{\sm} (X, U)$ for the subspace of all smooth (i.e. locally constant) functions. The subspaces of compactly supported functions are denoted $\mathcal{C}^{\la}_{\cs} (X, U)$ and $\mathcal{C}^{\sm}_{\cs} (X, U)$ respectively. If $\Omega$ is an open and closed subset of $X$ then we write $\mathbf{1}_{\Omega} \in \mathcal{C}^{\sm} (X, K)$ for the indicator function of $\Omega$. For any $f \in \mathcal{C}^{\la} (X, U)$ we write $f_{| \Omega}$ for $f \mathbf{1}_{\Omega}$. If $Y$ is an open subset of $X$, $\mathbb{Y}$ is a rigid analytic space defined over $L$ and $\varphi: Y \to \mathbb{Y}(L)$ is locally analytic isomorphism which is compatible with all the charts of $X$ then we write $\mathcal{C}^{\an}(Y, K)$ for the subspace of $\mathcal{C}^{\la} (Y, K)$ consisting of functions $f: Y \to K$ such that $f \circ \varphi^{-1}$ comes from a holomorphic function on $\mathbb{Y}(L)$. We say $f \in \mathcal{C}^{\la} (X, K)$ is analytic on $Y$ if $f|_{Y} \in \mathcal{C}^{\an}(Y, K)$.

\section{Subspaces of $\Ind_{\Bbar}^{G}(\nu)$} \label{ssubspaces}

Now we recall some definitions and propositions from \cite{emertonjacquet2}, Emerton's forthcoming paper on the relation of his Jacquet module functor to parabolic induction, which contains a longer exposition of all the material in this section. For simplicity we often give definitions only in the cases we need them, rather than the more general versions found in \cite{emertonjacquet2}. All representations will be vector spaces over $K$, even if this is not explicitly mentioned.

\begin{definition}
Let $U$ be a barrelled, Hausdorff, locally convex $K$-vector space with an action of a locally $L$-analytic group $J$ by continuous $K$-linear automorphisms. We say $U$ is a \textbf{locally analytic representation} of $J$ if for every $u \in U$ the orbit map $J \to U, j \mapsto j u$ is in $\mathcal{C}^{\la}(J, U)$.
\end{definition}

If $U$ is a locally analytic representation of $H$ then we can differentiate the action of $J$ to get an action of $\mathfrak{j}$, or equivalently of its enveloping algebra $\env (\mathfrak{j})$, as explained in Remark 2.5 in \cite{emertonsummary}.

If $\nu$ is a locally analytic character $T \to GL_{1}(K)$ then let $K_{\nu}$ denote the corresponding one-dimensional representation of $T$. As every $\overline{b} \in \Bbar$ can be written uniquely as $\overline{n} t$ with $\overline{n} \in \Nbar$ and $t \in T$, we define the locally analytic parabolic induction to be
\[
\Ind_{\Bbar}^{G} (\nu) = \{ f \in \mathcal{C}^{\la}(G, K): f(\overline{n} t g) = \nu(t) f(g) \text{ for all } \overline{n} \in \Nbar, t \in T, g \in G \}
\]
with the right regular action of $G$: $g' f (g) = f (g g')$. This is a locally analytic representation of $G$, as explained in Proposition~2.1.1 of \cite{emertonjacquet2}.

The support of any $f \in \Ind_{\Bbar}^{G} (\nu)$ is an open and closed subset of $G$ which is invariant under multiplication on the left by $\Bbar$. Its image in $\Bbar \backslash G$ is therefore open and compact. We refer to this as the support of $f$, $\supp f$. If $\Omega$ is any open subset of $\Bbar \backslash G$ we let $\Ind_{\Bbar}^{G} (\nu) (\Omega)$ denote the subspace of elements whose support is contained in $\Omega$.

Since $N \cap \Bbar = \{ e \}$, the natural map $N \to \Bbar \backslash G$ given by $n \mapsto \Bbar n$ is an open immersion. We use this map to regard $N$ as an open subset of $\Bbar \backslash G$. By Lemma~2.3.6 of \cite{emertonjacquet2}, this open immersion induces a topological isomorphism
\begin{equation} \label{suppN}
\mathcal{C}^{\la}_{\cs}(N, K_{\nu}) \overset{\sim}{\longrightarrow} \Ind_{\Bbar}^{G} (\nu)(N).
\end{equation}

We extend the right translation action of $N$ on $\mathcal{C}^{\la}_{\cs} (N, K_{\nu})$ to a locally analytic action of $B$ by letting $t \in T$ act on $f \in \mathcal{C}^{\la}_{\cs} (N, K_{\nu})$ as follows:
\[
t f(n) = \nu(t) f (t^{-1} n t).
\]
On the other hand the action of $B$ on $\Ind_{\Bbar}^{G} (\nu)$ preserves $\Ind_{\Bbar}^{G} (\nu)(N)$ as $\Bbar N B = \Bbar N$, so we have an action of $B$ on $\Ind_{\Bbar}^{G} (\nu)(N)$. These actions make \eqref{suppN} $B$-equivariant.

As $\Ind_{\Bbar}^{G} (\nu)$ is a locally analytic representation of $G$ it also has an action of $\g$. If $X \in \g$ and $f \in \Ind_{\Bbar}^{G} (\nu)$ is $0$ on some open neighbourhood of $g \in G$ then $X f$ is also $0$ on this neighbourhood. Hence the action of $\g$ preserves $\Ind_{\Bbar}^{G} (\nu)(N)$, whence we get an action of $\g$ on  $\Ind_{\Bbar}^{G} (\nu)(N)$. Restricting it to $\b$ gives the same action as differentiating the $B$-action, so $\Ind_{\Bbar}^{G} (\nu)(N)$ is a $(\g, B)$-representation. We use \eqref{suppN} to transfer this action of $\g$ to $\mathcal{C}^{\la}_{\cs}(N, K_{\nu})$.

We now identify various subspaces of $\mathcal{C}^{\la}_{\cs} (N, K_{\nu})$, which by \eqref{suppN} correspond to subspaces of $\Ind_{\Bbar}^{G} (\nu)(N)$.

Define $\mathcal{C}^{\pol}(N, K)$, the ring of algebraic $K$-valued functions on $N$, to be the set of all functions $N \to K$ which come from global sections of the structure sheaf of $\mathbf{N}$ over $K$. We give $\mathcal{C}^{\pol}(N, K)$ its finest locally convex topology, so the natural injection into $\mathcal{C}^{\la}(N, K)$ is continuous. We let $N$ act by the right regular representation. We extend this to an action of $B$ by $t f (n) = f(t^{-1} n t)$. This makes $\mathcal{C}^{\pol}(N, K)$ an algebraic representation, in the sense that we may write it as a union of an increasing series of finite dimensional $B$-invariant subspaces, on each of which $B$ acts through an algebraic representation of $\mathbf{B}$. Each of these representations is a fortiori a locally analytic representation of $B$, so we can differentiate them to get actions of $\b$. These all agree, so we get an action of $\b$ on $\mathcal{C}^{\pol}(N, K)$. In fact because we have given $\mathcal{C}^{\pol}(N, K)$ its finest locally convex topology the action of $B$ makes it a locally analytic representation, as explained after Lemma~2.5.3 in \cite{emertonjacquet2}, which gives us another way of constructing this action of $\b$.

We let $B$ act on $K_{\nu}$ via $B \to B / N \cong T$. We define the space of polynomial functions on $N$ with coefficients in $K_{\nu}$ to be $\mathcal{C}^{\pol} (N, K_{\nu}) = \mathcal{C}^{\pol} (N, K) \tensor_{K} K_{\nu}$, equipped with the inductive, or equivalently projective, tensor product topology (cf. \S17 of \cite{schnfunctionalanalysis}). This has an action of $B$ by taking the tensor product of the actions on the two factors. Since both of these actions are locally analytic the action on $\mathcal{C}^{\pol} (N, K_{\nu})$ is locally analytic, so we have an action of $\b$. We now explain how to extend this to an action of $\g$.

We write $\n^k$ for $\{ E_1 \cdots E_k : E_{i} \in \n$ for all $1 \leq i \leq k \} \subseteq \env(\n)$. For any $\env(\n)$-module $M$ we define $M^{\n^{\infty}}$ to be the subspace of all $x \in M$ such that $\n^k x = \{ 0 \}$ for some positive integer $k$. We let $A_{\nu}$ denote the one-dimensional $K$-representation of $\Bbar$ on which $T$ acts as $\nu$ and $\Bbar$ acts via $\Bbar \to \Bbar / \Nbar \cong T$. From the discussion following Lemma~2.5.3 in \cite{emertonjacquet2} the map
\begin{align*}
\mathcal{C}^{\pol}(N, K_{\nu}) \overset{\sim}{\longrightarrow} \Hom_K (\env (\n), A_{\nu})^{\n^{\infty}} &&
f \longmapsto (u \mapsto (uf) (e))
\end{align*}
is an isomorphism of $\env (\n)$-modules, where the action of $\n$ on $\Hom_K (\env (\n), A_{\nu})$ is given by $X \phi (u) = \phi (u X)$ for all $X \in \n$, $\phi \in \Hom_K (\env (\n), K_{\nu})$ and $u \in \env (\n)$. By the Poincar\'e-Birkhoff-Witt theorem there is an isomorphism of $\env(\n)$-modules $\Hom_K (\env (\n), A_{\nu})^{\n^{\infty}} \overset{\sim}{\longrightarrow} \Hom_{\env(\bbar)} (\env (\g), A_{\nu})^{\n^{\infty}}$ where $\env(\g)$ is a $\env(\bbar)$-module by the multiplication on the left and has an action of $\n$ by multiplication on the right. Combining these we get an isomorphism of $\env(\n)$-modules
\[
\mathcal{C}^{\pol} (N, A_{\nu}) \overset{\sim}{\longrightarrow} \Hom_{\env(\bbar)} (\env (\g), A_{\nu})^{\n^{\infty}}
\]
We give $\Hom_{\env(\bbar)} (\env (\g), A_{\nu})^{\n^{\infty}}$ an action of $\g$ by $X \phi (u) = \phi (u X)$ for all $X \in \g$, $\phi \in \Hom_{\env(\bbar)} (\env (\g), A_{\nu})^{\n^{\infty}}$ and $u \in \env(\g)$. This map is then $\b$-equivariant. We use it to extend the action of $\b$ on $\mathcal{C}^{\pol} (N, K_{\nu})$ to an action of $\g$. By Lemma~2.5.8 in \cite{emertonjacquet2} this action is continuous and $\mathcal{C}^{\pol}(N, K_{\nu})$ is a $(\g, B)$-representation.

We define
\[
\mathcal{C}^{\lp}_{\cs}(N, K_{\nu}) = \mathcal{C}^{\pol}(N, K_{\nu}) \tensor_{K} \mathcal{C}^{\sm}_{\cs}(N, K)
\]
with the inductive tensor product topology (this coincides with the projective tensor product topology in this case, by Proposition~1.1.31 of \cite{emertonlocanvect}), where ``lp" is short for ``locally polynomial". We let $\g$ and $B$ act on both factors, where $\mathcal{C}^{\sm}_{\cs} (N, K)$ has the trivial action of $\g$. By Lemma~2.5.22 in \cite{emertonjacquet2} this action of $B$ is locally analytic. These actions make $\mathcal{C}^{\lp}_{\cs} (N, K_{\nu})$ a continuous $(\g, B)$-representation, i.e. the maps $\g \times \mathcal{C}^{\lp}_{\cs} (N, K_{\nu}) \to \mathcal{C}^{\lp}_{\cs} (N, K_{\nu})$ and $B \times \mathcal{C}^{\lp}_{\cs} (N, K_{\nu}) \to \mathcal{C}^{\lp}_{\cs} (N, K_{\nu})$ are both continuous.

Multiplication of algebraic functions by smooth functions gives a map
\[
\mathcal{C}^{\lp}_{\cs} (N, K_{\nu}) \longrightarrow \mathcal{C}^{\la}_{\cs} (N, K_{\nu})
\]
which is a continuous, $(\g, B)$-equivariant injection by Lemma~2.5.24 in \cite{emertonjacquet2}. We can think of its image as those locally analytic functions from $N$ to $K_{\nu}$ which are locally given by polynomials.

Now suppose that $X$ is an open subset of $N$, $\mathbb{X}$ is a rigid analytic affinoid ball defined over $L$ and $\varphi: X \to \mathbb{X}(L)$ is a locally analytic isomorphism compatible with all charts of $N$. Then the image of the map
\begin{align*}
\mathcal{C}^{\pol} (N, K_{\nu}) \to \mathcal{C}^{\an} (X, K_{\nu}) && f \to f|_{X}
\end{align*}
is dense in $\mathcal{C}^{\an} (X, K_{\nu})$. It follows that the image of $\mathcal{C}^{\lp}_{\cs} (N, K_{\nu})$ in $\mathcal{C}^{\la}_{\cs} (N, K_{\nu})$ is also dense.

\section{Lie algebra representations} \label{sliealg}

For this section only we change the notation. We work with a semisimple Lie algebra $\g$ over $K$ (instead of $L$) with a split Cartan subalgebra $\h$ (i.e. for all $X \in \h$ the eigenvalues of $\ad X$ are in $K$) and Borel subalgebra $\b \supseteq \h$ with nilpotent radical $\n$. Write $\bbar$ for the opposite Borel subalgebra to $\b$.

Let $\env(\g)$-$\Mod$ denote the category of $\env(\g)$-modules with morphisms given by linear maps which commute with the $\env(\g)$-actions. Let $\mathcal{C}$ denote the full subcategory of $\env(\g)$-$\Mod$ given by those modules $M$ on which $\h$ acts diagonalisably and the weight spaces are finite dimensional. It is easily checked that this is an abelian category.

Given $M \in \mathcal{C}$, the action of $\env(\g)$ on the dual representation $M^{*}$ is given by $u \phi (m) = \phi (S(u) m)$ for $\phi \in M^{*}, u \in \env(\g)$ and $m \in M$. If $\lambda$ is a weight of $M$ then we have an injection $i: (M_{\lambda})^{*} \to M^{*}$ by extending $\phi \in (M_{\lambda})^{*}$ by $0$ on all the other weight spaces $M_{\mu}$, and  $i( (M_{\lambda})^{*}) \subseteq  (M^{*})_{-\lambda}$. Moreover, for any $\phi \in (M^{*})_{-\lambda}$, $\phi (M_{\mu}) = 0$ unless $\lambda = \mu$, so we have equality: $i((M_{\lambda})^{*}) = (M^{*})_{-\lambda}$.

Since $M = \bigoplus_{\lambda \in \h^{*}} M_{\lambda}$ we get that $M^{*} \cong \prod_{\lambda \in \h^{*}} (M_{\lambda})^{*} \cong \prod_{\lambda \in \h^{*}} (M^{*})_{\lambda}$. We define $M^{\vee}$ to be
\[
M^{\vee} = \bigoplus_{\lambda \in \h^{*}} (M^{*})_{\lambda} \subseteq M^{*}
\]
Note that the action of $\h$ preserves $(M^{*})_{\lambda}$ and if $\phi \in (M^{*})_{\lambda}$ and $X \in \g_{\alpha}$ then $X \phi \in (M^{*})_{\lambda - \alpha}$. It follows that $M^{\vee}$ is a $\env(\g)$-submodule of $M^{*}$.

Clearly $\h$ acts diagonalisably on $M^{\vee}$. Since $(M^{*})_{\lambda} \cong (M_{-\lambda})^{*}$ the weight spaces are finite dimensional. Hence $M^{\vee} \in \mathcal{C}$.

\begin{lemma} \label{exact}
The contravariant functor $F: \mathcal{C} \to \mathcal{C}$ given by $M \to M^{\vee}$ is an anti-equivalence of categories. In particular it is exact.
\end{lemma}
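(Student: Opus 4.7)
The plan is to show that $F$ is its own quasi-inverse, i.e.\ to construct a natural isomorphism $\eta : \mathrm{id}_{\mathcal{C}} \Rightarrow F \circ F$. Once this is done, $F$ is an anti-equivalence of categories, and exactness follows. One can either appeal to the general fact that an anti-equivalence of abelian categories is exact, or argue directly: a sequence in $\mathcal{C}$ is exact if and only if it is exact on each (finite-dimensional) weight space, and taking $K$-linear duals is exact on finite-dimensional vector spaces.

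To build $\eta_M$ for $M \in \mathcal{C}$, I would apply the identity $(N^{\vee})_{-\mu} = (N_\mu)^{*}$ noted immediately before the lemma twice, once with $N = M$ and once with $N = M^{\vee}$, obtaining
$$
\bigl((M^{\vee})^{\vee}\bigr)_{\lambda} \;=\; \bigl((M^{\vee})_{-\lambda}\bigr)^{*} \;=\; \bigl((M_\lambda)^{*}\bigr)^{*}.
$$
Since each $M_\lambda$ is finite-dimensional by the definition of $\mathcal{C}$, the canonical evaluation map $M_\lambda \to ((M_\lambda)^{*})^{*}$ is an isomorphism; summing over $\lambda \in \h^{*}$ yields a $K$-linear isomorphism $\eta_M : M \to (M^{\vee})^{\vee}$ given by the usual formula $\eta_M(m)(\phi) = \phi(m)$, which is manifestly natural in $M$. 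That $\eta_M$ actually lands in the submodule of weight vectors (rather than just in the full double dual) is exactly what the finite-dimensionality of weight spaces buys us.

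The only genuine check is that $\eta_M$ is $\env(\g)$-equivariant. Unwinding the definition $u \psi(n) = \psi(S(u) n)$ of the action on duals, for $u \in \env(\g)$, $m \in M$, and $\phi \in M^{\vee}$ one computes
$$
\bigl(u \cdot \eta_M(m)\bigr)(\phi) \;=\; \eta_M(m)\bigl(S(u)\phi\bigr) \;=\; \bigl(S(u)\phi\bigr)(m) \;=\; \phi\bigl(S^{2}(u) \cdot m\bigr) \;=\; \phi(u m) \;=\; \eta_M(u m)(\phi),
$$
where the fourth equality uses that $S$ is an involution on $\env(\g)$. This cancellation of the two occurrences of $S$ is the whole content of the proof, and is really the only (modest) obstacle to writing it down: had the action on duals been defined via $u$ itself rather than $S(u)$, the double dual would have carried a twist of the action by $S^{2}$, which happens to be trivial here but would be a genuine obstruction in other algebraic settings. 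It follows that $\eta : \mathrm{id}_{\mathcal{C}} \Rightarrow F \circ F$ is a natural isomorphism, so $F$ is an anti-equivalence and the lemma follows.
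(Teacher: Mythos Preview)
Your proposal is correct and follows essentially the same route as the paper: both identify $((M^{\vee})^{\vee})_{\lambda}$ with $(M_{\lambda})^{**}$ via the formula $(N^{\vee})_{-\mu} = (N_{\mu})^{*}$, invoke finite-dimensionality of weight spaces to conclude $M^{\vee\vee} \cong M$, and deduce that $F$ is an anti-equivalence. Your write-up is in fact more careful than the paper's, which asserts $F^{2}(M) \cong M$ and appeals to the ``fully faithful $+$ essentially surjective'' criterion without explicitly verifying naturality or the $\env(\g)$-equivariance of the canonical map; your computation using $S^{2} = \mathrm{id}$ fills precisely this gap.
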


\begin{proof}
First $F$ is a functor as any $\phi: M \to M'$ restricts to a map $M_{\lambda} \to M'_{\lambda}$ for any $\lambda \in \h^{*}$, and it is contravariant as $M \to M^{*}$ is. Now consider $(M^{\vee})^{\vee} \subseteq M^{**}$. It is a direct sum of its weight spaces and the $\lambda$ weight space is $((M^{\vee})_{- \lambda})^{*}$, which is $(M_{\lambda})^{**}$,  precisely the image of $M_{\lambda}$ under the canonical embedding $M \to M^{**}$. Thus $M^{\vee \vee}$ is isomorphic to $M$.

By a standard result in category theory $F$ is an anti-equivalence of categories if and only if $F$ is fully faithful and essentially surjective. This follows from the fact that $F^{2} (M) \cong M$ for any $M \in \mathcal{C}$.
\end{proof}

\begin{definition}
Let $M$ be a $\env(\g)$-module and let $\mathfrak{a}$ be a Lie subalgebra of $\g$. We say $\mathfrak{a}$ acts \textbf{locally finitely} on $x \in M$ if $x$ is contained in some finite dimensional $\env(\mathfrak{a})$-submodule of $M$. We define the \textbf{$\mathfrak{a}$-finite part of $M$} to be the subset of all elements of $M$ on which $\mathfrak{a}$ acts locally finitely. This is a $\env(\mathfrak{a})$-submodule of $M$.
\end{definition}

\begin{definition}
We define the \textbf{category~$\catobar$} to be the full subcategory of $\env(\g)$-$\Mod$ consisting of finitely generated modules on which $\nbar$ acts locally finitely and $\h$ acts diagonalisably.
\end{definition}

Note that $\catobar$ is closed under finite direct sums, submodules and quotients, and it contain all finite dimensional $\env(\g)$-modules. It is more usual to work with the category $\cato$, defined as in the above definition but with $\n$ instead of $\nbar$. For more background see \cite{humphreyscato}.

\begin{lemma} \label{subcat}
The category $\catobar$ is a subcategory of $\mathcal{C}$.
\end{lemma}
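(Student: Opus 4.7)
The plan is to show that every object $M$ of $\catobar$ lies in $\mathcal{C}$; since both are full subcategories of $\env(\g)$-$\Mod$, this suffices. As $\h$-diagonalisability is built into the definition of $\catobar$, the only thing to verify is that each weight space $M_\xi$ of $M$ is finite-dimensional.

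First I would reduce to weight vector generators. Given a finite generating set $x_1,\ldots,x_n$, I decompose each as $x_i = \sum_\mu x_{i,\mu}$ using $\h$-diagonalisability. The subspace $\env(\h) x_i$ is $\h$-stable and decomposes into its $\h$-weight spaces, and the decomposition of $x_i$ there must agree with its decomposition in $M$, so $x_{i,\mu} \in \env(\h) x_i$; in particular $\{x_{i,\mu}\}$ still generates $M$. A short commutation argument using $[\env(\nbar),\h] \subseteq \env(\nbar)$ shows that the locally $\nbar$-finite part of $M$ is $\h$-stable, so $\env(\h) x_i$, and hence each $x_{i,\mu}$, is locally $\nbar$-finite.

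Next, for any weight vector $y \in M$ of weight $\mu$ which is locally $\nbar$-finite, the submodule $\env(\nbar) y$ is finite-dimensional (by local $\nbar$-finiteness) and $\h$-stable, since commuting $\h$ past a product of root vectors in $\env(\nbar)$ shows that each $F_{\alpha_1}\cdots F_{\alpha_k} y$ is a weight vector of weight $\mu - \alpha_1 - \cdots - \alpha_k$. Hence $V := \sum_{i,\mu} \env(\nbar) x_{i,\mu}$ is a finite-dimensional $\bbar$-stable subspace of $M$. By PBW we have $\env(\g) = \env(\n) \env(\bbar)$, so $M = \env(\g) \cdot V = \env(\n) \cdot V$.

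Finally, fix a weight $\xi$. Since $V$ has only finitely many weights $\nu_1,\ldots,\nu_p$ (being finite-dimensional and $\h$-stable), and since for each $\gamma$ in the positive root lattice the weight-$\gamma$ piece $\env(\n)_\gamma$ is finite-dimensional by PBW (spanned by the finitely many ordered monomials in the $E_\alpha$ of weight $\gamma$), we obtain
\[
M_\xi = \sum_{j=1}^{p} \env(\n)_{\xi - \nu_j} \cdot V_{\nu_j},
\]
a finite sum of finite-dimensional pieces. The main obstacle is the first step: ensuring that replacing the generators by their weight components preserves local $\nbar$-finiteness, which rests on the slightly subtle $\h$-stability of the locally $\nbar$-finite part of $M$ combined with the agreement of weight decompositions between a finite-dimensional $\h$-stable subspace and its ambient module.
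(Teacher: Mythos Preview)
Your proof is correct and follows essentially the same route as the paper: replace the generators by weight vectors, pass to the finite-dimensional $\bbar$-stable subspace they span together with their $\env(\nbar)$-orbits, and then use the PBW decomposition $\env(\g)=\env(\n)\env(\bbar)$ to conclude that $M$ is spanned over $\env(\n)$ by finitely many weight vectors, so each weight space is finite-dimensional.

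One remark: the step you flag as the ``main obstacle'' --- showing that the weight components $x_{i,\mu}$ remain locally $\nbar$-finite --- is not an obstacle at all. By the definition of $\catobar$, $\nbar$ acts locally finitely on \emph{every} element of $M$, not merely on a generating set; so as soon as $x_{i,\mu}\in M$ you already know it is locally $\nbar$-finite, with no need to invoke $\h$-stability of the locally $\nbar$-finite part. The paper's proof simply uses this directly.
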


\begin{proof}
Suppose $M \in \catobar$. We know $\h$ acts diagonalisably on $M$ so we just have to show that the weight spaces are finite dimensional.

Let $X \subseteq M$ be a finite set which generates $M$ as a $\env(\g)$-module. As $\h$ acts diagonalisably on $M$ we have $M = \bigoplus_{\lambda \in \h^{*}} M_{\lambda}$, so we can write each $x \in X$ as a sum of finitely many elements of weight spaces. Replacing each $x \in X$ with the elements thus obtained, we may assume each $x \in X$ is a weight vector. As $\nbar$ acts locally finitely on $M$, $\env(\nbar) x$ is finite dimensional for each $x \in X$. It is also a $\env(\h)$-module, so we may pick a basis of weight vectors for it. Replacing each $x \in X$ by this basis we have a finite set $X$ of weight vectors in $M$ which generates $M$ as a $\env(\n)$-module.

Choose an ordering $\{ \alpha_1, \cdots, \alpha_r \}$ for $\Phi^{+}$. Then $\{ E_{\alpha_1}, \cdots, E_{\alpha_r} \}$ is a basis for $\n$, so by the Poincar\'e-Birkhoff-Witt theorem $\{ E_{\alpha_1}^{n_1} \cdots E_{\alpha_r}^{n_r} : n_i \geq 0 \ \forall i \}$ is a basis for $\env(\n)$. It follows that the set $\{ E_{\alpha_1}^{n_1} \cdots E_{\alpha_r}^{n_r} x : n_i \geq 0, x \in X \}$ spans $M$ as a vector space over $K$. If $x \in X$ has weight $\lambda_{x}$ then $E_{\alpha_1}^{n_1} \cdots E_{\alpha_r}^{n_r} x$ has weight $\lambda_{x} + \sum n_i \alpha_i$. For each weight $\mu \in \h^{*}$ and $x \in X$, $\mu - \lambda_{x}$ can only be written as a sum of positive roots in a finite number of ways, so the weight space $M_{\mu}$ is finite dimensional.
\end{proof}

Recall that $\n^k \subseteq \env(\n)$ denotes $\{ E_1 \cdots E_k : E_{i} \in \n$ for all $1 \leq i \leq k \}$ and $M^{\n^{\infty}}$ denotes all elements of $M$ which are annihilated by $\n^{k}$ for some $k$.

\begin{lemma}  \label{ninfty}
If $M \in \catobar$ then $M^{\vee} = (M^{*})^{\n^{\infty}}$.
\end{lemma}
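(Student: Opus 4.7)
The plan is to prove the two inclusions $M^\vee \subseteq (M^*)^{\n^\infty}$ and $(M^*)^{\n^\infty} \subseteq M^\vee$ separately. The key tool for both directions, extracted from the proof of Lemma~\ref{subcat}, is that $M = \env(\n) X$ for a finite set $X$ of weight vectors, with weights $\lambda_x$ ($x \in X$); in particular every weight of $M$ lies in $\bigcup_{x \in X}(\lambda_x + \Z_{\geq 0}\Phi^+)$.

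For the forward inclusion, since $M^\vee = \bigoplus_\lambda (M^*)_\lambda$ and $(M^*)^{\n^\infty}$ is closed under finite sums, it suffices to show that each weight vector $\phi \in (M^*)_{-\lambda}$ is annihilated by $\n^k$ for some $k$. Such a $\phi$ vanishes on $M_\mu$ for $\mu \neq \lambda$, as noted in the text. Choosing each $X_i \in \n$ to be a root vector of weight $\beta_i \in \Phi^+$, the identity $(X_1 \cdots X_k \phi)(m) = (-1)^k \phi(X_k \cdots X_1 m)$ shows that $\n^k \phi \neq 0$ forces $\lambda - \mu = \sum \beta_i$ for some weight $\mu$ of $M$, so $\lambda - \mu$ is a sum of $k$ positive roots. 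Writing $\mu = \lambda_x + \gamma$ with $\gamma \in \Z_{\geq 0}\Phi^+$, we get $\lambda - \lambda_x = \gamma + \sum \beta_i$, whose height is at least $k$. Since $X$ is finite, taking $k$ larger than the maximum of $\text{ht}(\lambda - \lambda_x)$ over those $x \in X$ with $\lambda - \lambda_x \in \Z_{\geq 0}\Phi^+$ forces $\n^k \phi = 0$.

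For the reverse inclusion, let $\phi \in M^*$ satisfy $\n^k \phi = 0$, so $\phi$ vanishes on the $\h$-stable subspace $\n^k M$ and descends to a functional on $M / \n^k M$. I would show that this quotient has only finitely many weights. By PBW, $M$ is spanned by products $E_{\alpha_1}^{n_1} \cdots E_{\alpha_r}^{n_r} x$ with $x \in X$, and any such product with $\sum n_j \geq k$ regroups as $(Y_1 \cdots Y_k)(Y_{k+1} \cdots Y_N x)$ with $Y_i \in \n$, hence lies in $\n^k M$. So $M / \n^k M$ is spanned by the images of the PBW monomials with $\sum n_j < k$ applied to $X$, whose weights $\lambda_x + \sum n_j \alpha_j$ form a finite set. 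Consequently $\phi$ is supported on only finitely many weight spaces of $M$, and under the canonical decomposition $M^* \cong \prod_\mu (M^*)_{-\mu}$ only finitely many components of $\phi$ are nonzero, placing $\phi$ in $\bigoplus_\mu (M^*)_{-\mu} = M^\vee$.

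The principal obstacle is combinatorial bookkeeping rather than conceptual difficulty: tracking heights under shifts by positive roots in the forward direction, and verifying the regrouping step that places PBW monomials of total degree at least $k$ inside $\n^k M$ for the reverse direction. Once these points are handled, both inclusions are short.
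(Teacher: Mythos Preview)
Your proof is correct and essentially matches the paper's approach. Both directions exploit the finite generating set $X$ of weight vectors from Lemma~\ref{subcat} together with PBW combinatorics: your height argument for the forward inclusion is a rephrasing of the paper's observation that $(\n^k M)_\lambda = 0$ for large $k$, and your quotient $M/\n^k M$ coincides with the paper's $\sum_{x\in X}\env(\n)\n^k x$ (since $\env(\n)\n^k = \n^k\env(\n)$), whose finite codimension and hence finitely many missing weights is established by the same PBW counting.
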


\begin{proof}
For any $\env(\n)$-module $M$ let $\n^k M$ denote the smallest subspace of $M$ containing $E_1 \cdots E_k v$ for any $E_1, \cdots, E_k \in \n$ and $v \in M$. Let $\env(\n) \n^k$ denote the smallest subspace of $\env(\n)$ containing $u E_1 \cdots E_k$ for any $E_1, \cdots, E_k \in \n$ and $u \in \env(\n)$ (so $\n^k \env(\n) = \env(\n) \n^k$). If we pick an ordering $\{ \alpha_1, \cdots, \alpha_r \}$ of $\Phi^{+}$ then $\{ E_{\alpha_1}^{n_1} \cdots E_{\alpha_r}^{n_r} : n_j \geq 0 \}$ is a basis for $\env(\n)$. As $\env(\n) \n^k$ contains $\{ E_{\alpha_1}^{n_1} \cdots E_{\alpha_r}^{n_r} : \sum n_j \geq k \}$ it has finite codimension in $\env(\n)$.

Let $X$ be a finite set of weight vectors which generates $M$ as a $\env(\n)$-module, as constructed in the proof of Lemma~\ref{subcat}. If $x \in X$ has weight $\lambda_x$ then $E_{\alpha_{m_1}} \cdots E_{\alpha_{m_k}} x$ has weight $\lambda_x + \sum \alpha_{m_i}$ for any $\alpha_{m_1}, \cdots, \alpha_{m_k} \in \Phi$ (i.e. the order is not important), so for any weight $\lambda$ we can find $k \in \N$ such that $(\n^k M)_{\lambda} = 0$.

Let $\phi \in M^{\vee} = \bigoplus_{\lambda \in \h^{*}} (M^{*})_{\lambda}$. Say $\phi = \sum \phi_{\lambda}$ where $\phi_{\lambda} \in (M^{*})_{\lambda}$ for each $\lambda$ and the sum is over a finite set $\Lambda$. Then we can find $k \in \N$ such that $(\n^k M)_{- \lambda} = 0$ for all $\lambda \in \Lambda$, i.e. $\phi |_{\n^k M} = 0$. Then $\n^k \phi (M) = \phi (\n^k M) = 0$, so $\phi \in (M^{*})^{\n^{\infty}}$. Hence $M^{\vee} \subseteq (M^{*})^{\n^{\infty}}$.

Now suppose $\phi \in (M^{*})^{\n^{\infty}}$. Choose $k$ such that $\n^k \phi = 0$. Then $\env(\n) \n^k x \subseteq \ker \phi$ for all $x \in X$. The action of $\h$ preserves $\sum_{x \in X} \env(\n) \n^k x \subseteq M$, so it splits up into weight spaces. If we can show that the number of $\lambda$ such that $M_{\lambda} \nsubseteq \sum_{x \in X} \env(\n) \n^k x$ is finite then $\phi$ can only be non-zero on finitely many $M_{\lambda}$, and hence $\phi \in \bigoplus_{\lambda \in \h^{*}} (M^{*})_{\lambda}$. This would prove that $(M^{*})^{\n^{\infty}} \subseteq M^{\vee}$.

If we have $y \in M_{\lambda}$ then as $M = \sum_{x \in X} \env(\n) x$ we can write $y = \sum u_{x} x$ with each $u_{x} \in \env(\n)$. As each $x \in X$ is a weight vector, $\h$ acts diagonalisably on $\env(\n) x$, so $\env(\n) x = \bigoplus_{\mu \in \h^{*}} (\env(\n) x)_{\mu}$. We may thus replace each $u_{x} x$ with its component in $(\env(\n) x)_{\lambda}$ and still get $y = \sum u_{x} x$ with each $u_{x} x$ of weight $\lambda$. Thus we need to show that for each $x \in X$ there are only finitely many $\lambda$ such that $(\env(\n) x)_{\lambda} \nsubseteq \env(\n) \n^k x$. This follows from the fact that $\env(\n) \n^k$ has finite codimension in $\env(\n)$.
\end{proof}

\section{From maps between Verma modules to maps between the spaces $\mathcal{C}^{\la}_{\cs} (N, K_{\nu})$} \label{sdefmaps}

For $\nu \in \h^{*}$ let $A_{\nu}$ be the one-dimensional $K$-vector space with the action of $\env(\bbar)$ given by extending $\nu$ to $\bbar$ by letting $\nbar$ act trivially. For $\nu \in X(\mathbf{T})$ this is consistent with our earlier definition. We define
\[
\verma{\bbar}{\nu} = \env(\g) \tensor_{\env(\bbar)} A_{\nu}^{*}
\]
where $\env(\g)$ is a $\env(\bbar)$ module by multiplication on the right. This is a Verma module for the Borel subalgebra $\bbar$ (it is more standard to use $\b$). It is a lowest weight module, with lowest weight $- \nu$. It is in $\catobar$, and hence in $\mathcal{C}$ by Lemma~\ref{subcat}.

\comment{
Let $\h^{*}$ denote the set of Lie algebra homomorphisms from $\h$ to $K$. For $\mu \in \h^{*}$ we write $\verma{\b}{\mu}$ to denote the Verma module over $K$ with highest weight~$\mu$, i.e. $\verma{\b}{\mu} = \env(\g) \tensor_{\env(\b)} K_{\nu}$, where $\env(\g)$ is a $\env(\b)$ module by multiplication on the right and we let $\n$ act trivially on $K_{\nu}$. This is a highest weight module, with highest weight $\mu$. It is in $\cato$. We also define
\[
\verma{\bbar}{\mu} = \env(\g) \tensor_{\env(\bbar)} (K_{\mu})^{*}
\]
where $K_{\mu}$ is the one-dimensional $K$-vector space with the action of $\env(\bbar)$ given by extending $\mu$ to $\bbar$ by letting $\mathfrak{\nbar}$ act trivially. This is a lowest weight module, with lowest weight $- \mu$. It is in $\catobar$. For reasons that will become clear in \S\ref{sBGG}, we are more interested in $\verma{\bbar}{\mu}$ than in $\verma{\b}{\mu}$.
}

Suppose $\lambda$ and $\mu$ are in $X(\mathbf{T})$ and fix a morphism $\psi: \verma{\bbar}{\mu} \to \verma{\bbar}{\lambda}$. Applying the functor
$F: \mathcal{C} \to \mathcal{C}$, $M \to M^{\vee}$ from
Lemma~\ref{exact} we get a map $\psi^{\vee}: \verma{\bbar}{\lambda}^{\vee} \to \verma{\bbar}{\mu}^{\vee}$ of $\env(\g)$-modules.

By Proposition~5.5.4 of \cite{dixmier}, since $\verma{\bbar}{\nu} = \env(\g) \tensor_{\env(\bbar)} A_{\nu}^{*}$ we have that $(\verma{\bbar}{\nu})^{*} \cong \Hom_{\env(\bbar)} (\env(\g), A_{\nu}^{**})$, where we make $\env(\g)$ a $\env(\bbar)$-module by the left regular representation and the action of $\env(\g)$ on $\Hom_{\env(\bbar)} (\env(\g), A_{\nu}^{**})$ is by multiplication on the right on the source, i.e. $u \phi (u') = \phi (u' u)$ for all $u, u' \in \env(\g)$ and $\phi \in \Hom_{\env(\bbar)} (\env(\g), A_{\nu}^{**})$. As $A_{\nu}$ is finite dimensional we have a natural isomorphism $A_{\nu}^{**} \cong A_{\nu}$, so
\[
\verma{\bbar}{\nu}^{*} \cong \Hom_{\env(\bbar)} (\env(\g), A_{\nu}).
\]
Let $\nu \in X(\mathbf{T})$. In \S\ref{ssubspaces} we defined an action of $\g$ on $\mathcal{C}^{\pol}(N, K_{\nu})$ by declaring the isomorphism of $\env(\n)$-modules $\mathcal{C}^{\pol}(N, K_{\nu}) \cong \Hom_{\env(\bbar)} (\env(\g), A_{\nu})^{\n^{\infty}}$ to be an isomorphism of $\env(\g)$-modules. Combining this with Lemma~\ref{ninfty} we get an isomorphism of $\env(\g)$-modules
\[
\zeta_{\nu}: \verma{\bbar}{\nu}^{\vee} \xrightarrow{\sim} \mathcal{C}^{\pol}(N, K_{\nu})
\]
by composing $\verma{\bbar}{\nu}^{\vee} \cong (\verma{\bbar}{\nu}^{*})^{\n^{\infty}} \cong \Hom_{\env(\bbar)} (\env(\g), A_{\nu})^{\n^{\infty}} \cong \mathcal{C}^{\pol}(N, K_{\nu})$.

We define $\psi^{\pol} : \mathcal{C}^{\pol} (N, K_{\lambda}) \to \mathcal{C}^{\pol} (N, K_{\mu})$, by $\psi^{\pol} = \zeta_{\mu}^{-1} \circ \psi^{\vee} \circ \zeta_{w \cdot \lambda}$. This is a morphism of $\env(\g)$-modules. Recall that $\mathcal{C}^{\lp}_{\cs} (N, K_{\nu}) = \mathcal{C}^{\pol} (N, K_{\nu}) \tensor_{K} \mathcal{C}^{\sm}_{\cs} (N, K)$. We define $\psi^{\lp}: \mathcal{C}^{\lp}_{\cs} (N, K_{\lambda}) \longrightarrow \mathcal{C}^{\lp}_{\cs} (N, K_{\mu})$ on simple tensors by
\[
\psi^{\lp} (f_1 \tensor f_2) = \psi^{\pol} (f_1) \tensor f_2
\]
and extend $K$-linearly. This is $\g$-equivariant, as $\g$ acts trivially on $\mathcal{C}^{\sm}_{\cs} (N, K)$. We spend the remainder of this section proving that $\psi^{\lp}$ extends to a unique $\g$-equivariant function $\psi^{\la}: \mathcal{C}^{\la}_{\cs} (N, K_{\lambda}) \to \mathcal{C}^{\la}_{\cs} (N, K_{\mu})$.

By the Poincar\'e-Birkhoff-Witt theorem, there is a unique $u_{\psi} \in \env(\n)$ such that $\psi(1 \tensor 1) = u_{\psi} \tensor 1$. Since $\verma{\bbar}{\mu} = \env(\g) \tensor_{\env(\bbar)} K_{\mu}$ is generated as a $\env(\g)$-module by the single element $1 \tensor 1$, $u_{\psi}$ determines $\psi$ by the formula $\psi (u \tensor 1) = u (\psi (1 \tensor 1)) = u u_{\psi} \tensor 1$.

Since $\zeta_{\nu}^{-1}: \mathcal{C}^{\pol} (N, K_{\nu}) \cong \verma{\bbar}{\nu}^{\vee}$ sends $f$ to the map $u \tensor 1 \mapsto S(u) f (e)$, for any $f \in \mathcal{C}^{\pol} (N, K_{\lambda})$ we have
\begin{equation} \label{thetaproperty}
S(u) \psi^{\pol} (f) (e) = S(u u_{\psi}) f (e)
\end{equation}
for all $u \in \env(\g)$. Moreover, the fact that $\zeta_{\mu}$ is an isomorphism implies that any $f' \in \mathcal{C}^{\pol} (N, K_{\mu})$ is determined by knowing $S(u) f'(e)$ for all $u \in \env(\g)$, so \eqref{thetaproperty} uniquely determines $\psi^{\pol} (f)$.

Let us examine the $\env(\g)$-action on $\mathcal{C}^{\lp}(N, K_{\lambda})$. By Lemma~2.5.24 of \cite{emertonjacquet2}, the natural map $\mathcal{C}^{\lp}_{\cs} (N, K_{\lambda}) \to \mathcal{C}^{\la}_{\cs} (N, K_{\lambda})$ given by multiplication of polynomial functions by smooth functions is a $\env(\g)$-equivariant injection. The $\env(\g)$-action on $\mathcal{C}^{\la}_{\cs} (N, K_{\lambda})$ is given by the isomorphism $\mathcal{C}^{\la}_{\cs} (N, K_{\lambda}) \to \Ind_{\Bbar}^{G} (\lambda)(N)$. The $\env(\g)$-action on $\Ind_{\Bbar}^{G}(\lambda)(N)$ comes from differentiating the right regular action of $G$ on $\mathcal{C}^{\la}(G, K)$.

But there is also the left regular action of $G$ on $\mathcal{C}^{\la}(G, K)$, which we denote with a subscript L. Recall that to make this a left action rather than a right action we define it as
\[
h_{L} f(g) = f(h^{-1} g).
\]
We can also differentiate the left regular action of $G$ to get an action of $\g$, which we call the L action to distinguish it from our original action of $\g$. Since the left and right regular actions of $G$ on $\mathcal{C}^{\la} (G, K)$ commute, the L action of $\g$ commutes with the right regular action of $G$.

For any $g \in G$ and $f \in \mathcal{C}^{\la} (G, K)$ we have
\[
(g_{L} f) (e) = f(g^{-1}) = (g^{-1} f) (e)
\]
so for any $X \in \g$ we have $X_{L} f (e) = (-X) f (e)$, and hence $u_{L} f (e) = S(u) f (e)$ for any $u \in \env(\g)$. It follows that for $u \in \env(\g)$ and $f \in \mathcal{C}^{\la} (G, K)$ we have
\begin{equation} \label{eight}
\begin{split}
S(u) ( (u_{\psi})_L f) (e) & = (u_{\psi})_L (S(u) f) (e)
\\ & = S(u_{\psi}) (S(u) f) (e)
\\ & = S(u u_{\psi}) f (e)
\end{split}
\end{equation}
which closely resembles \eqref{thetaproperty}.

We now establish some properties of the L action of $\env(\g)$. Let $\Omega \subseteq G$ be closed and open, $f \in \mathcal{C}^{\la} (G, K)$ and $u \in \env(\g)$. Recall that $f_{| \Omega} = f \mathbf{1}_{\Omega}$.

\begin{lemma} \label{uLrestrict}
We have $u_L (f_{| \Omega}) = (u_L f)_{| \Omega}$.
\end{lemma}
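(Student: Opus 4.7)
The plan is to reduce to the case $u = X \in \g$ and then extend multiplicatively to all of $\env(\g)$. The key observation is that $\mathbf{1}_\Omega$ is a locally constant function on $G$, precisely because $\Omega$ is simultaneously open and closed.

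First I would handle a single $X \in \g$. The L action of $\g$ is given by differentiating left translation, so concretely $X_L h(g) = \tfrac{d}{dt}\big|_{t=0} h(\exp(-tX)g)$ for $h \in \mathcal{C}^{\la}(G,K)$. Since $\mathbf{1}_\Omega$ is locally constant, for every $g \in G$ the function $t \mapsto \mathbf{1}_\Omega(\exp(-tX)g)$ is constant in a neighbourhood of $t = 0$, and hence $X_L \mathbf{1}_\Omega = 0$. The L action is a derivation with respect to pointwise multiplication (which follows immediately from the product rule for the usual derivative applied to $f \cdot \mathbf{1}_\Omega$), so
\[
X_L(f_{|\Omega}) \;=\; X_L(f \cdot \mathbf{1}_\Omega) \;=\; (X_L f)\cdot \mathbf{1}_\Omega + f\cdot(X_L\mathbf{1}_\Omega) \;=\; (X_L f)\cdot\mathbf{1}_\Omega \;=\; (X_L f)_{|\Omega}.
\]

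Next I would extend this to arbitrary $u \in \env(\g)$ by induction on the degree in the canonical filtration of $\env(\g)$. Since $\env(\g)$ is generated as an associative algebra by $\g$, it suffices to show that if the identity holds for $u, u' \in \env(\g)$ then it holds for $uu'$, which is immediate from the fact that the L action is an algebra action: $(uu')_L = u_L \circ u'_L$. Explicitly, assuming the inductive hypothesis for $u'$ and invoking the base case (iteratively) for $u$,
\[
(uu')_L(f_{|\Omega}) = u_L\bigl(u'_L(f_{|\Omega})\bigr) = u_L\bigl((u'_L f)_{|\Omega}\bigr) = \bigl(u_L u'_L f\bigr)_{|\Omega} = \bigl((uu')_L f\bigr)_{|\Omega}.
\]

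There is no substantial obstacle here; the only point requiring care is verifying that the L action of $X \in \g$ really does satisfy the Leibniz rule for pointwise products of locally analytic functions, which is standard since $X_L$ is the directional derivative along a left-invariant vector field. The whole content of the lemma is the geometric fact that $\mathbf{1}_\Omega$ is invariant under sufficiently small left translations at every point, which is exactly what openness and closedness of $\Omega$ give.
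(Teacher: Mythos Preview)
Your proof is correct and follows essentially the same approach as the paper: apply the Leibniz rule for $X \in \g$ together with $X_L \mathbf{1}_\Omega = 0$ (since $\mathbf{1}_\Omega$ is smooth), then extend to all of $\env(\g)$ by multiplicativity. The paper's version is terser but the argument is the same.
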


\begin{proof}
For any $X \in \g$, $X_{L} (f \mathbf{1}_{\Omega}) = (X_{L} f) \mathbf{1}_{\Omega} + f (X_{L} \mathbf{1}_{\Omega})$, by the Leibniz rule. But $X_{L} \mathbf{1}_{\Omega} = 0$ as $\mathbf{1}_{\Omega}$ is smooth, so $X_{L} (f \mathbf{1}_{\Omega}) = (X_{L} f) \mathbf{1}_{\Omega}$. It follows that $u_{L} (f \mathbf{1}_{\Omega}) = (u_{L} f) \mathbf{1}_{\Omega}$ for all $u \in \env(\g)$, and hence that $u_L (f_{| \Omega}) = (u_L f)_{| \Omega}$.
\end{proof}

\begin{corollary} \label{preservessupport}
If $g = u_L f$ then $\supp g \subseteq \supp f$, and we can find $f' \in \mathcal{C}^{\la} (G, K)$ such that $u_L f' = g$ and $\supp g = \supp f'$.
\end{corollary}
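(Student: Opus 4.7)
The plan is to deduce both assertions directly from Lemma~\ref{uLrestrict} by cutting $f$ down to an appropriate open closed subset of $G$. Throughout, I would use that $G$ is a locally $L$-analytic manifold and therefore has a basis of open closed subsets, so that the indicator function of a small enough neighborhood is smooth and Lemma~\ref{uLrestrict} applies.

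For the inclusion $\supp g \subseteq \supp f$: pick $x \notin \supp f$ and choose an open closed neighborhood $\Omega$ of $x$ disjoint from $\supp f$. Then $f_{| \Omega} = f \mathbf{1}_{\Omega} = 0$, and Lemma~\ref{uLrestrict} yields
\[
(u_L f)_{| \Omega} = u_L (f_{| \Omega}) = 0,
\]
so $g = u_L f$ vanishes on $\Omega$ and hence $x \notin \supp g$.

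For the existence of $f'$, the natural candidate is $f' := f_{| \supp g}$, which makes sense because in every setting where the corollary is applied $\supp g$ is open and closed (cf.\ the discussion of supports of functions in $\Ind_{\Bbar}^{G} (\nu)$ in \S\ref{ssubspaces}). Applying Lemma~\ref{uLrestrict} with $\Omega = \supp g$ gives
\[
u_L f' = (u_L f)_{| \supp g} = g_{| \supp g} = g.
\]
The containment $\supp f' \subseteq \supp g$ is immediate from the definition of $f'$, while the reverse containment $\supp g = \supp (u_L f') \subseteq \supp f'$ is just the first assertion of the corollary applied to $f'$. The only mildly delicate point, and in my view not a real obstacle, is keeping track that $\supp g$ is open and closed so that $\mathbf{1}_{\supp g}$ is smooth; beyond that, the argument is a direct application of Lemma~\ref{uLrestrict}.
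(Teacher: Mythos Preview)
Your proof is correct and follows essentially the same approach as the paper: both arguments reduce everything to Lemma~\ref{uLrestrict} and take $f' = f_{|\supp g}$ for the second assertion. The only cosmetic difference is that for the inclusion $\supp g \subseteq \supp f$ the paper applies Lemma~\ref{uLrestrict} directly with $\Omega = \supp f$ (using $f = f_{|\supp f}$), whereas you work on the complementary side by restricting to a small $\Omega$ disjoint from $\supp f$; your version of the second part is actually slightly more careful than the paper's in spelling out why $\supp f' = \supp g$.
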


\begin{proof}
Since $f = f_{| \supp f}$ we have that $g = u_L f = u_L (f_{| \supp f}) = (u_L f)_{| \supp f} = g_{| \supp f}$, whence it follows that $\supp g \subseteq \supp f$.

Set $f' = f_{| \supp g}$, so by construction $\supp f' = \supp g$. Then $u_L f' = u_L (f_{| \supp g}) = (u_L f)_{| \supp g} = g_{| \supp g} = g$.
\end{proof}

Suppose further that $\Omega$ is locally analytically isomorphic to the $L$-points of a rigid analytic space in a way which is compatible with all charts of $G$.

\begin{lemma} \label{preservesanalytic}
If $f$ is analytic on $\Omega$ then $(u_L f)$ is also analytic on $\Omega$.
\end{lemma}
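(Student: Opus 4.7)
The plan is to prove the lemma by induction on the PBW filtration of $\env(\g)$, reducing to the case $u = X \in \g$. Since the L action is an algebra homomorphism $\env(\g) \to \End_K(\mathcal{C}^{\la}(G, K))$, and $\env(\g)$ is generated by $\g$ as a $K$-algebra (indeed as a vector space $\env(\g)$ is spanned by monomials in elements of $\g$, by PBW), it suffices to show that $X_L f$ is analytic on $\Omega$ for every $X \in \g$; the general case then follows by iteration and $K$-linearity.

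For the case $u = X \in \g$, I would first note that analyticity on $\Omega$ is a local property: because $\mathbb{Y}$ is an affinoid and holomorphic functions glue, a function $h \colon \Omega \to K$ is analytic on $\Omega$ as soon as every point of $\Omega$ has an open neighbourhood (compatible with the chart $\varphi$) on which $h$ is analytic in the sense of \S\ref{ssnotation}. Thus it suffices to fix $g_0 \in \Omega$ and show that $X_L f$ is analytic on some neighbourhood of $g_0$ contained in $\Omega$.

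The main input is that, because $\varphi$ is compatible with all charts of $G$, both the group multiplication and the exponential map can be expressed in these coordinates as rigid analytic morphisms on small enough affinoid subdomains. Therefore there exist a neighbourhood $V$ of $0$ in $L$ and an open $U \subseteq \Omega$ containing $g_0$ such that for $(t, g) \in V \times U$ the element $\exp(-tX) g$ lies in $\Omega$, and the map $(t, g) \mapsto \exp(-tX) g$ corresponds under $\varphi$ to a rigid analytic morphism into $\mathbb{Y}$. Pulling back $f$, which is the restriction of a holomorphic function on $\mathbb{Y}$, through this morphism shows that $F(t, g) = f(\exp(-tX) g)$ is holomorphic in $(t, g)$ on the corresponding product affinoid. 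Differentiating in $t$ at $t = 0$ preserves holomorphy in the remaining variable, so $(X_L f)(g) = \partial_t|_{t=0} F(t, g)$ is analytic in $g$ on $U$, which completes the argument.

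\textbf{Main obstacle.} The delicate technical step is justifying that the map $(t, g) \mapsto \exp(-tX) g$ is not merely locally analytic but actually extends, after transport via $\varphi$, to a rigid analytic morphism between appropriate affinoid subdomains, so that the two-variable function $F(t, g)$ is honestly holomorphic. This rests on the compatibility of $\varphi$ with the charts of $G$ together with the rigid analyticity of multiplication and the exponential map near the identity; everything else (linearity, the PBW reduction, the interchange of derivative and restriction) is routine.
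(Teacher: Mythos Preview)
Your argument is correct. The paper's proof is a single sentence: it simply observes that the L action of $\env(\g)$ on $\mathcal{C}^{\la}(G,K)$ is by differential operators, and differential operators preserve analytic functions. Your proposal unpacks exactly this claim --- the PBW reduction to $X\in\g$, the rigid-analyticity of $(t,g)\mapsto \exp(-tX)g$ on small affinoids, and differentiation in $t$ --- which is precisely how one justifies that $X_L$ acts as a first-order differential operator with analytic coefficients in the chart $\varphi$. So the two approaches are the same in spirit; the paper simply treats ``differential operators preserve analyticity'' as a known fact, whereas you spell out the mechanism. Your version has the virtue of making explicit the one nontrivial point (that left multiplication by $\exp(-tX)$ is rigid analytic after transport through $\varphi$, by compatibility with the charts of $G$), which the paper leaves implicit.
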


\begin{proof}
The L action of $\env(\g)$ on $\mathcal{C}^{\la} (G, K)$, and hence on $\mathcal{C}^{\an} (\Omega, K)$, is via differential operators, which preserve analytic functions.
\end{proof}

Now we are in a position to define $\psi^{\la}$.

\begin{theorem} \label{psiextends}
There is a unique continuous $\env(\g)$-module homomorphism $\psi^{\la}: \mathcal{C}^{\la}_{\cs}(N, K_{\lambda}) \to \mathcal{C}^{\la}_{\cs}(N, K_{\mu})$ extending $\psi^{\lp}$.
\end{theorem}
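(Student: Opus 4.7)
The natural definition, suggested by the formal identity between \eqref{thetaproperty} and \eqref{eight}, is to set $\psi^{\la}(f) := ((u_\psi)_L f)|_N$, where $f$ on the right is viewed as an element of $\Ind_{\Bbar}^G(\lambda)(N) \subseteq \mathcal{C}^{\la}(G, K)$ via \eqref{suppN}. Local analyticity of the restriction is immediate, and Corollary~\ref{preservessupport} shows $\supp (u_\psi)_L f \subseteq \supp f$, so $\psi^{\la}(f) \in \mathcal{C}^{\la}_{\cs}(N, K_\mu)$. Continuity of $\psi^{\la}$ follows from the continuity of each step in the composition: the isomorphism \eqref{suppN}, the inclusion into $\mathcal{C}^{\la}(G,K)$, the L-action of $u_\psi$, and restriction to $N$.

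The substantive verification is that $\psi^{\la}$ restricts to $\psi^{\lp}$ on $\mathcal{C}^{\lp}_{\cs}(N, K_\lambda)$. Since $u_\psi \in \env(\n)$, the L-action of $u_\psi$ at a point $n \in N$ depends only on the values of the function along $N$ in a neighborhood of $n$ (because $\exp(-sX) n \in N$ for $X \in \n$). Hence on a simple tensor $f_1 f_2 \in \mathcal{C}^{\pol}(N, K_\lambda) \otimes \mathcal{C}^{\sm}_{\cs}(N, K)$, the iterated Leibniz rule for the L-derivations of $\env(\n)$, together with $X_L f_2 = 0$ for every $X \in \n$ (as $f_2$ is locally constant), gives $\psi^{\la}(f_1 f_2) = ((u_\psi)_L f_1)|_N \cdot f_2$. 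It remains to check $((u_\psi)_L f_1)|_N = \psi^{\pol}(f_1)$. For any $n \in N$, writing $n \cdot (-)$ for right translation and using that the L and R actions of $\g$ commute,
\[
((u_\psi)_L f_1)(n) = ((u_\psi)_L (n \cdot f_1))(e) = S(u_\psi)(n \cdot f_1)(e)
\]
by \eqref{eight}; on the other hand, the $\env(\g)$-equivariance of $\psi^{\pol}$ (which integrates to $N$-equivariance by unipotence) together with \eqref{thetaproperty} at $u = 1$ gives $\psi^{\pol}(f_1)(n) = \psi^{\pol}(n \cdot f_1)(e) = S(u_\psi)(n \cdot f_1)(e)$, so the two expressions agree.

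With the extension property in hand, uniqueness and $\env(\g)$-equivariance are formal consequences of density. The subspace $\mathcal{C}^{\lp}_{\cs}(N, K_\lambda)$ is dense in $\mathcal{C}^{\la}_{\cs}(N, K_\lambda)$ by the last paragraph of \S\ref{ssubspaces}, so Hausdorffness of the target makes any continuous extension of $\psi^{\lp}$ unique, and the continuity of the $\env(\g)$-actions on both sides transfers the $\env(\g)$-equivariance of $\psi^{\lp}$ to $\psi^{\la}$. The main obstacle is the identification $((u_\psi)_L f_1)|_N = \psi^{\pol}(f_1)$, which ultimately rests on recognising \eqref{thetaproperty} and \eqref{eight} as two computations of the same linear functional $u \mapsto S(u u_\psi) f_1(e)$ on $\env(\g)$; the rest of the argument is routine.
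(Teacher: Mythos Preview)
Your proof is correct and takes a genuinely different, more direct route than the paper. The key move is your observation that $u_\psi \in \env(\n)$, so the L-action of $u_\psi$ evaluated at points of $N$ depends only on the restriction to $N$; this lets you compute $((u_\psi)_L F)|_N$ entirely as the left $\env(\n)$-action on $f_1 f_2 \in \mathcal{C}^{\la}(N,K)$, kill $f_2$ by Leibniz, and then match $(u_\psi)_L f_1$ with $\psi^{\pol}(f_1)$ pointwise using $N$-equivariance of $\psi^{\pol}$ and \eqref{thetaproperty} at $u=1$. The paper instead works on $G$: it shows the difference $F = (u_\psi)_L \Phi_\lambda(f_1\otimes f_2) - \Phi_\mu(\psi^{\lp}(f_1\otimes f_2))$ is killed by all point distributions at $e$, invokes analyticity on a chart $X \ni e$ to conclude $F=0$, and then bootstraps to arbitrary $f_2$. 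Your argument avoids the analyticity step and the special role of $e$. Two small remarks: your citation of \eqref{eight} is really the analogous identity $u_L h(e) = S(u)h(e)$ on the group $N$ (since $f_1$ lives on $N$, not $G$), and your continuity argument should note that the map respects support so that continuity on each compact piece yields continuity into the inductive limit $\mathcal{C}^{\la}_{\cs}(N,K_\mu)$. One thing the paper's approach buys, which yours does not directly, is the fact that $(u_\psi)_L$ actually maps $\Ind_{\Bbar}^G(\lambda)(N)$ into $\Ind_{\Bbar}^G(\mu)(N)$ (not merely that its restriction to $N$ lands where it should); this stronger statement is used verbatim in the proofs of Lemma~\ref{Mequivariant} and Lemma~\ref{psiGequiv}, though of course the map itself agrees with yours by uniqueness.
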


\begin{proof}
Uniqueness is immediate by the density of $\mathcal{C}^{\lp}_{\cs} (N, K_{\lambda})$ in $\mathcal{C}^{\la}_{\cs}(N, K_{\lambda})$.

Let $f_1 \in \mathcal{C}^{\pol} (N, K_{\lambda})$ and $f_2 \in \mathcal{C}^{\sm}_{\cs} (N, K)$. We identify $\mathcal{C}^{\lp}_{\cs} (N, K_{\nu})$ with its image in $\mathcal{C}^{\la}_{\cs} (N, K_{\nu})$ and we define
\[
\Phi_{\nu}: \mathcal{C}^{\la}_{\cs} (N, K_{\nu}) \longrightarrow \mathcal{C}^{\la} (G, K) (\Bbar N)
\]
to be the continuous, $\env(\g)$-equivariant inclusion obtained by composing the isomorphism $\mathcal{C}^{\la}_{\cs} (N, K_{\nu}) \cong \Ind_{B}^{G} (\nu) (N)$ with the inclusion $\Ind_{B}^{G} (\nu) (N) \subseteq \mathcal{C}^{\la} (G, K) (\Bbar N)$. This means that $\Phi_{\nu} (f_1 \tensor f_2)$ is defined on $\Bbar N$ by $\overline{b} n \mapsto \nu (\overline{b}) f_1(n) f_2(n)$ and is $0$ outside of $\Bbar N$. In particular, $\Phi_{\nu} (f_1 \tensor f_2)(e) = f_1(e) f_2(e)$.

Let us examine $F = (u_{\psi})_L \Phi_{\lambda}(f_1 \tensor f_2) - \Phi_{\mu}(\psi^{\lp}(f_1 \tensor f_2))$. By \eqref{thetaproperty}, \eqref{eight} and the $\env(\g)$-equivariance of $\Phi_{\mu}$, for all $u \in \env(\g)$ we have
\begin{align*}
S(u) F (e) & = S(u) \left((u_{\psi})_L (\Phi_{\lambda}(f_1 \tensor f_2)) \right) (e) - S(u) ( \Phi_{\mu}(\psi^{\pol}(f_1) \tensor f_2) ) (e) \\
& = S ( u u_{\psi}) (\Phi_{\lambda}(f_1 \tensor f_2)) (e) - \Phi_{\mu}(S(u) \psi^{\pol}(f_1) \tensor f_2) (e)  \\
& = \Phi_{\lambda}(S ( u u_{\psi}) f_1 \tensor f_2) (e) - S(u) \psi^{\pol}(f_1)(e) f_2(e)  \\
& = S ( u u_{\psi}) f_1(e) f_2(e) -  S(u u_{\psi}) f_1 (e) f_2(e) \\
& = 0.
\end{align*}
We've just shown that $u F (e) = 0$ for all $u \in \env(\g)$, and hence that the image of $F$ under all point distributions at $e$ is $0$. It follows that $F$ must be identically 0 in some neighbourhood of $e$.

Let $X$ be a chart of $N$ containing $e$ and set $f_2 = \mathbf{1}_{X}$. Then $F \in \mathcal{C}^{\la} (G, K)$ has $\supp F \subseteq \Bbar X$ by Lemma~\ref{preservessupport}, and it is analytic on $\Bbar X$ by Lemma~\ref{preservesanalytic}. Hence it is $0$ on $\Bbar X$, and we have shown that $F = 0$.

Let $Y \subseteq N$ be any compact, open subset, and choose a chart $X \subseteq N$ containing $e$ such that $Y \subseteq X$. By the above argument we know that $\Phi_{\mu}(\psi^{\lp}(f_1 \tensor \mathbf{1}_{X})) = (u_{\psi})_L \Phi_{\lambda}(f_1 \tensor \mathbf{1}_{X})$. Then, using Lemma~\ref{uLrestrict} and the fact that $\Phi_{\mu} (g_{|Y}) = \Phi_{\mu}(g)_{|\Bbar Y}$, we have that
\begin{align*}
\Phi_{\mu}(\psi^{\lp}(f_1 \tensor \mathbf{1}_{Y})) & = \Phi_{\mu}((\psi^{\pol}(f_1) \tensor \mathbf{1}_{X})_{|Y}) \\
& = (\Phi_{\mu}(\psi^{\pol}(f_1) \tensor \mathbf{1}_{X}))_{|\Bbar Y} \\
& = ((u_{\psi})_L \Phi_{\lambda}(f_1 \tensor \mathbf{1}_{X}))_{|\Bbar Y} \\
& = (u_{\psi})_L (\Phi_{\lambda}(f_1 \tensor \mathbf{1}_{X})_{|\Bbar Y}) \\
& = (u_{\psi})_L \Phi_{\lambda}((f_1 \tensor \mathbf{1}_{X})_{|Y}) \\
& = (u_{\psi})_L \Phi_{\lambda}(f_1 \tensor \mathbf{1}_{Y})
\end{align*}
By linearity it follows that $\Phi_{\mu}(\psi^{\lp}(g)) = (u_{\psi})_L \Phi_{\lambda}(g)$ for all $g \in \mathcal{C}^{\lp}_{\cs} (N, K_{\lambda})$. In particular this implies that
\begin{align*}
(u_{\psi})_L \Phi_{\lambda}(\mathcal{C}^{\lp}_{\cs} (N, K_{\lambda})) & = \Phi_{\mu}(\psi^{\lp}(\mathcal{C}^{\lp}_{\cs} (N, K_{\lambda}))) \\
& \subseteq \Phi_{\mu}(\mathcal{C}^{\lp}_{\cs} (N, K_{\mu})) \\
& \subseteq \Ind_{B}^{G} (\mu) (N)
\end{align*}
All the maps involved are continuous, $\mathcal{C}^{\lp}_{\cs} (N, K_{\lambda})$ is dense in $\mathcal{C}^{\la}_{\cs} (N, K_{\lambda})$ and $\Ind_{B}^{G} (\mu) (N)$ is a closed subspace of $\mathcal{C}^{\la} (G, K) (\Bbar N)$ so it follows that $(u_{\psi})_L \Phi_{\lambda}(\mathcal{C}^{\la}_{\cs} (N, K_{\lambda})) \subseteq \Ind_{B}^{G} (\mu) (N)$.

Since $\Phi_{\mu} : \mathcal{C}^{\la}_{\cs} (N, K_{\mu}) \to \Ind_{B}^{G} (\mu) (N)$ is an isomorphism we can define
\[
\psi^{\la} = \Phi_{\mu}^{-1} \circ (u_{\psi})_L \circ \Phi_{\lambda}: \mathcal{C}^{\la}_{\cs}(N, K_{\lambda}) \to \mathcal{C}^{\la}_{\cs}(N, K_{\mu}).
\]
This is continuous and $\g$-equivariant as all the maps involved in its definition are. As $\Phi_{\mu}(\psi^{\lp}(f)) = (u_{\psi})_L \Phi_{\lambda}(f)$ for all $f \in \mathcal{C}^{\lp}_{\cs} (N, K_{\lambda})$, this extends $\psi^{\lp}$.
\end{proof}

\begin{lemma} \label{psiproperties}
Let $f \in \mathcal{C}^{\la}_{\cs}(N, K_{\lambda})$.
\begin{enumerate}
\item \label{partone} If $\Omega \subseteq N$ is open and closed then $\psi^{\la} (f_{| \Omega}) = \psi^{\la} (f)_{| \Omega}$.
\item \label{parttwo} $\supp \psi^{\la} (f) \subseteq \supp f$.
\item \label{partthree}$f' = f_{| \supp \psi^{\la} (f)}$ satisfies $\psi^{\la} (f') = \psi^{\la} (f)$ and $\supp f' = \supp \psi^{\la} (f)$.
\item \label{partfour} If $f$ is analytic on $X \subseteq N$ then so is $\psi^{\la} (f)$.
\item \label{partfive} For all $n \in N$, $\psi^{\la} (n f) = n \psi^{\la} (f)$.
\end{enumerate}
\end{lemma}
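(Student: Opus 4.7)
The plan is to exploit the explicit formula $\psi^{\la} = \Phi_{\mu}^{-1} \circ (u_{\psi})_L \circ \Phi_{\lambda}$ established in the proof of Theorem~\ref{psiextends}, and check that each assertion is a formal consequence of properties of $(u_\psi)_L$ (which are already recorded) together with structural properties of the maps $\Phi_\nu$. The key compatibility is that $\Phi_\nu$ translates the restriction operation $f \mapsto f_{|\Omega}$ for $\Omega \subseteq N$ open and closed into the restriction $h \mapsto h_{|\Bbar \Omega}$ for $\Bbar \Omega \subseteq G$, which follows directly from the formula $\Phi_\nu(f)(\overline{b} n) = \nu(\overline{b}) f(n)$ on $\Bbar N$ together with the fact that $\Bbar \Omega = \{\overline{b} n : \overline{b} \in \Bbar,\ n \in \Omega\}$ is open and closed in $G$.

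For (\ref{partone}), I would combine this restriction compatibility of $\Phi_\lambda$ and $\Phi_\mu$ with Lemma~\ref{uLrestrict} to get
\[
\psi^{\la}(f_{|\Omega}) = \Phi_\mu^{-1}\bigl((u_\psi)_L \Phi_\lambda(f)_{|\Bbar\Omega}\bigr) = \Phi_\mu^{-1}\bigl(((u_\psi)_L \Phi_\lambda(f))_{|\Bbar\Omega}\bigr) = \psi^{\la}(f)_{|\Omega}.
\]
Part (\ref{parttwo}) is then an immediate corollary: take $\Omega = \supp f$, so $f_{|\Omega} = f$ forces $\psi^{\la}(f) = \psi^{\la}(f)_{|\supp f}$. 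Part (\ref{partthree}) follows by setting $f' = f_{|\supp \psi^{\la}(f)}$: part (\ref{partone}) gives $\psi^{\la}(f') = \psi^{\la}(f)_{|\supp \psi^{\la}(f)} = \psi^{\la}(f)$, and then part (\ref{parttwo}) applied to $f'$ forces $\supp \psi^{\la}(f) = \supp \psi^{\la}(f') \subseteq \supp f' \subseteq \supp \psi^{\la}(f)$.

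For (\ref{partfour}), observe that if $f$ is analytic on $X \subseteq N$ then $\Phi_\lambda(f)$ is analytic on $\Bbar X$ (since on this set it is the product of the algebraic function $\lambda(\overline{b})$ in the $\Bbar$-coordinate and $f(n)$ in the $N$-coordinate). Lemma~\ref{preservesanalytic} then guarantees $(u_\psi)_L \Phi_\lambda(f)$ is analytic on $\Bbar X$, and applying $\Phi_\mu^{-1}$ transports this back to analyticity of $\psi^{\la}(f)$ on $X$. For (\ref{partfive}), the $N$-action on $\mathcal{C}^{\la}_{\cs}(N, K_\nu)$ corresponds under $\Phi_\nu$ to the right regular action on $\mathcal{C}^{\la}(G, K)$, because \eqref{suppN} is $B$-equivariant. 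The right regular $G$-action commutes with the left regular $\env(\g)$-action on $\mathcal{C}^{\la}(G,K)$, hence with $(u_\psi)_L$, and applying $\Phi_\mu^{-1}$ gives $\psi^{\la}(nf) = n\psi^{\la}(f)$.

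There is no serious obstacle; the only point that requires mild care is verifying the restriction compatibility of $\Phi_\nu$ and the analyticity claim for $\Phi_\lambda(f)$ on $\Bbar X$, both of which are immediate from the definition of $\Phi_\nu$ and the product decomposition of $\Bbar N$.
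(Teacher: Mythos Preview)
Your proposal is correct and follows essentially the same route as the paper's proof, which simply records that parts~\ref{partone}--\ref{partfour} follow immediately from Lemmas~\ref{uLrestrict} and~\ref{preservesanalytic} and Corollary~\ref{preservessupport} (transported through $\Phi_\nu$), and that part~\ref{partfive} follows from the commutation of the $L$-action with the right regular action together with $\Phi_\nu(nf) = n\Phi_\nu(f)$. Your write-up just fills in the details the paper leaves implicit; in particular, the restriction compatibility $\Phi_\nu(f_{|\Omega}) = \Phi_\nu(f)_{|\Bbar\Omega}$ you use is exactly the identity invoked in the proof of Theorem~\ref{psiextends}.
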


\begin{proof}
Parts \ref{partone}-\ref{partfour} follow immediately from Lemmas~\ref{uLrestrict}, \ref{preservessupport} and \ref{preservesanalytic}. Since the L action of $\g$ and the right regular action of $G$ on $\mathcal{C}^{\la} (G, K)$ commute, part \ref{partfive} follows from the fact that $\Phi_{\nu} (n f) = n \Phi_{\nu} (f)$.
\end{proof}

\begin{remark}
In fact all $\env(\g)$-equivariant maps $\mathcal{C}^{\la}_{\cs}(N, K_{\lambda}) \to \mathcal{C}^{\la}_{\cs}(N, K_{\mu})$ are of the form $\psi^{\la}$ for some $\psi: \verma{\bbar}{\mu} \to \verma{\bbar}{\lambda}$, although we will not use this fact.
\end{remark}

\comment{
\begin{theorem}
Any map $\phi: \mathcal{C}^{\la}_{\cs}(N, K_{\lambda}) \to \mathcal{C}^{\la}_{\cs}(N, K_{\mu})$ is of the form $\psi^{\la}$ for some $\psi: \verma{\bbar}{\mu} \to \verma{\bbar}{\lambda}$.
\end{theorem}

\begin{proof}
First
\end{proof}
}

\section{A BGG-type resolution} \label{sBGG}

In this section we assume that $\mathbf{G}$ is semisimple. When using results from \S\ref{sliealg} we work with the semisimple Lie algebra $\g_{K} = \Lie(\mathbf{G}(K))$, Cartan subalgebra $\h_{K} = \Lie(\mathbf{T}(K))$ (which is split because $\mathbf{T}$ is maximal and split over $K$) and Borel subalgebra $\b_{K} = \Lie(\mathbf{B}(K))$. Since representations of $\g$ over $K$ are exactly the same thing as representations of $\g \tensor_{L} K = \g_{K}$ over $K$ we will implicitly equate the two.

\comment{
\begin{definition}
The \textbf{Weyl group} $W$ of $\mathbf{G}$ and $\mathbf{T}$ is the quotient of the normaliser $\mathbf{N_G (T)}$ of $\mathbf{T}$ in $\mathbf{G}$ by the centraliser $\mathbf{C_G (T)}$ of $\mathbf{T}$ in $\mathbf{G}$.
\end{definition}

It is a well-known result that $W$ is finite and is generated by the reflections $\{ s_{\alpha}: \alpha \in \Delta \}$. We define the length function on $W$ by setting $l(w)$ to be the minimal length of a word for $w$ in the $\{ s_{\alpha}: \alpha \in \Delta \}$. We write $W^{(l)}$ for $\{ w \in W: l(w) = l \}$, so $W^{(0)} = \{ e \}$ and $W^{(1)}$ is $\{ s_{\alpha}: \alpha \in \Delta \}$. The maximal length is $r = | \Phi^{+}|$ and there is a unique element of maximal length, which we denote $w_{0}$.

\begin{definition}
Let $X(\mathbf{T}) = \Hom (\mathbf{T}, \mathbb{G}_m)$ be the group of morphisms of algebraic groups from $\mathbf{T}$ to $\mathbb{G}_m$, with the group operation written additively.
\end{definition}

We make no restriction on the field of definition, but as $\mathbf{T}$ is split over $K$ all $\mu \in X(\mathbf{T})$ are defined over $K$.

There is an action of $W$ on $\mathbf{T}$ coming from the conjugation action of $\mathbf{N_G (T)}$ on $\mathbf{T}$, and from this we get an action of $W$ on $X(\mathbf{T})$ given by
\[
(w \mu) (t) = \mu (w^{-1} t)
\]
for any $w \in W$, $\mu \in X(\mathbf{T})$ and $t \in \mathbf{T}(A)$ where $A$ is any $L$-algebra. Let $\rho$ be the half sum of the positive roots, $\rho = \frac{1}{2} \sum_{\alpha \in \Phi^{+}} \alpha$. Define the affine action of $W$ on $X(\mathbf{T})$ by $w \cdot \mu = w (\mu + \rho) - \rho$. We write $W \cdot \lambda$ for $\{w \cdot \lambda: w \in W \}$. Note that $w_1 \cdot (w_2 \cdot \mu) = (w_1 w_2) \cdot \mu$, so this is indeed an action.
}

As $V^{*}$ is a finite dimensional irreducible algebraic representation of $G$, and hence of $\env(\g)$, over $K$, with lowest weight $-\lambda$, the \textbf{Bernstein-Gelfand-Gelfand resolution} of $V^{*}$ with respect to $\bbar$ is the exact sequence of $\env(\g)$-modules
\begin{multline} \label{bgg}
0 \to \verma{\bbar}{w_0 \cdot \lambda} \to \cdots \to \bigoplus_{w \in W^{(i)}} \verma{\bbar}{w \cdot \lambda} \to \\
\cdots \to \bigoplus_{w \in W^{(1)}} \verma{\bbar}{w \cdot \lambda} \to \verma{\bbar}{\lambda} \to V^{*} \to 0.
\end{multline}

\comment{
Recall from \S\ref{ssnotation} that $V$ is a finite dimensional irreducible algebraic representation of $G$, and hence of $\env(\g)$, over $K$, with highest weight $\lambda$. The \textbf{Bern{\v{s}}te{\u\i}n-Gel'fand-Gel'fand resolution} of $V$ is the exact sequence of $\env(\g)$-modules
\begin{multline*}
0 \to \verma{\b}{w_0 \cdot \lambda} \to \cdots \to \bigoplus_{w \in W^{(i)}} \verma{\b}{w \cdot \lambda} \to \\
\cdots \to \bigoplus_{w \in W^{(1)}} \verma{\b}{w \cdot \lambda} \to \verma{\b}{\lambda} \to V \to 0.
\end{multline*}
}

The BGG resolution was first constructed in \cite{bgg75} for a semisimple Lie algebra $\g$ over $\C$. A more recent treatment is given in \cite{humphreyscato}. As indicated at the beginning of \S0.1 of \cite{humphreyscato},  $\C$ is normally taken to be the field for convenience, but all that is required is that the field $K$ has characteristic 0 and $\h$ is a split Cartan subalgebra over $K$. It can be checked that the proof of the BGG resolution given in \cite{humphreyscato} holds in this case.

\comment{
We could equally well make this construction with a different choice of Borel subalgebra, which gives us a different resolution. Let us use $\bbar$ instead of $\b$. Consider the BGG resolution of $V^{*}$, this time in terms of the $\verma{\bbar}{\mu}$ instead of the $\verma{\b}{\mu}$. The lowest weight of $V^{*}$ is $-\lambda$, so we get:

\begin{multline} \label{bgg}
0 \to \verma{\bbar}{w_0 \cdot \lambda} \to \cdots \to \bigoplus_{w \in W^{(i)}} \verma{\bbar}{w \cdot \lambda} \to \\
\cdots \to \bigoplus_{w \in W^{(1)}} \verma{\bbar}{w \cdot \lambda} \to \verma{\bbar}{\lambda} \to V^{*} \to 0.
\end{multline}
}
With the exception of $\verma{\bbar}{\lambda} \to V^{*}$, the maps in \eqref{bgg} are of the form
\begin{align*}
\bigoplus_{w' \in W^{(i)}} \verma{\bbar}{w' \cdot \lambda} & \longrightarrow \bigoplus_{w \in W^{(i - 1)}} \verma{\bbar}{w \cdot \lambda} \\
(f_{w'})_{w' \in W^{(i)}} & \longmapsto \left( \sum_{\substack{\alpha \in \Phi^{+} \\ l(s_{\alpha} w) = i}} \theta_{\alpha, w \cdot \lambda} (f_{s_{\alpha} w}) \right)_{w \in W^{(i - 1)}}
\end{align*}
where $\theta_{\alpha, w \cdot \lambda}$ denotes a non-zero map $\verma{\bbar}{s_{\alpha} w \cdot \lambda} \to \verma{\bbar}{w \cdot \lambda}$.

\comment{
This is an exact sequence in $\catobar$, so we can apply the contravariant exact functor $F: \catobar \to \cato, M \mapsto M^{\vee}$ to get the following exact sequence in $\cato$:
}
This is an exact sequence in $\mathcal{C}$, so we can apply the contravariant exact functor $F: \mathcal{C} \to \mathcal{C}, M \mapsto M^{\vee}$ from Lemma~\ref{exact} to get the following exact sequence in $\mathcal{C}$:
\begin{multline} \label{dualbgg}
0 \to V \to \verma{\bbar}{\lambda}^{\vee} \to \bigoplus_{w \in W^{(1)}} \verma{\bbar}{w \cdot \lambda}^{\vee} \to \\
\cdots \to \bigoplus_{w \in W^{(i)}} \verma{\bbar}{w \cdot \lambda}^{\vee} \to \cdots \to \verma{\bbar}{w_0 \cdot \lambda}^{\vee} \to 0
\end{multline}
Using the isomorphisms $\zeta_{w \cdot \lambda}: \verma{\bbar}{w \cdot \lambda}^{\vee} \xrightarrow{\sim} \mathcal{C}^{\pol}(N, K_{w \cdot \lambda})$ we rewrite \eqref{dualbgg} as
\begin{multline} \label{polES}
0 \to V \to \mathcal{C}^{\pol}(N, K_{\lambda}) \to \bigoplus_{w \in W^{(1)}} \mathcal{C}^{\pol}(N, K_{w \cdot \lambda}) \to \\
\cdots \to \bigoplus_{w \in W^{(i)}} \mathcal{C}^{\pol}(N, K_{w \cdot \lambda}) \to \cdots \to \mathcal{C}^{\pol}(N, K_{w_0 \cdot \lambda}) \to 0
\end{multline}

This is an exact sequence in $\env(\g)$-$\Mod$. Recall that $\mathcal{C}^{\lp}_{\cs}(N, K_{\nu})$ is defined to be $\mathcal{C}^{\pol}(N, K_{\nu}) \tensor_{K} \mathcal{C}^{\sm}_{\cs}(N, K)$. We tensor \eqref{polES} over $K$ with $\mathcal{C}^{\sm}_{\cs}(N, K)$, which is considered as an object in $\env(\g)$-$\Mod$ by letting $\g$ act trivially on it. This preserves exactness, as any module over a field is flat and exactness is a property only of the underlying sequence of vector spaces. Thus we get the exact sequence of $\env(\g)$-modules:
\begin{multline} \label{lpES}
0 \to V \tensor_{K} \mathcal{C}^{\sm}_{\cs}(N, K) \to \mathcal{C}^{\lp}_{\cs}(N, K_{\lambda}) \to \bigoplus_{w \in W^{(1)}} \mathcal{C}^{\lp}_{\cs}(N, K_{w \cdot \lambda}) \to \\
\cdots \to \bigoplus_{w \in W^{(i)}} \mathcal{C}^{\lp}_{\cs}(N, K_{w \cdot \lambda}) \to \cdots \to \mathcal{C}^{\lp}_{\cs}(N, K_{w_0 \cdot \lambda}) \to 0
\end{multline}
With the exception of $V \tensor_{K} \mathcal{C}^{\sm}_{\cs}(N, K) \to \mathcal{C}^{\lp}_{\cs}(N, K_{\lambda})$, the maps in \eqref{lpES} are of the form
\begin{align*}
\bigoplus_{w \in W^{(i - 1)}} \mathcal{C}^{\lp}_{\cs}(N, K_{w \cdot \lambda}) & \longrightarrow \bigoplus_{w' \in W^{(i)}} \mathcal{C}^{\lp}_{\cs}(N, K_{w' \cdot \lambda}) \\
(f_{w})_{w \in W^{(i - 1)}} & \longmapsto \left( \sum_{\substack{\alpha \in \Phi^{+} \\ l(s_{\alpha} w') = i - 1}} \theta_{\alpha, s_{\alpha} w' \cdot \lambda}^{\lp} (f_{s_{\alpha} w'}) \right)_{w' \in W^{(i)}}
\end{align*}

Using the same formulae with $\theta_{\alpha, s_{\alpha} w' \cdot \lambda}^{\lp}$ replaced with $\theta_{\alpha, s_{\alpha} w' \cdot \lambda}^{\la}$ we get the following sequence of $\env(\g)$-modules:
\begin{multline} \label{laES}
0 \to V \tensor_{K} \mathcal{C}^{\sm}_{\cs} (N, K) \xrightarrow{d_{-1}} \mathcal{C}^{\la}_{\cs} (N, K_{\lambda}) \xrightarrow{d_{0}} \bigoplus_{w \in W^{(1)}} \mathcal{C}^{\la}_{\cs} (N, K_{w \cdot \lambda})  \xrightarrow{d_{1}} \\
\cdots \xrightarrow{d_{i - 1}} \bigoplus_{w \in W^{(i)}} \mathcal{C}^{\la}_{\cs} (N, K_{w \cdot \lambda}) \xrightarrow{d_{i}} \cdots \xrightarrow{d_{r - 1}} \mathcal{C}^{\la}_{\cs} (N, K_{w_0 \cdot \lambda}) \to 0
\end{multline}
We will prove that this sequence is exact in Corollary~\ref{laEScor}.

\section{Reductive groups} \label{sredgps}

Now let $\mathbf{G}$ be reductive and let $\mathbf{G}'$ denote the derived subgroup of $\mathbf{G}$, which is defined over $L$ by the first Corollary in \S2.3 of \cite{BorelLAG}. Note that $\mathbf{G}'$ is semisimple and $\mathbf{T}' = \mathbf{T} \cap \mathbf{G}'$ is a maximal torus in $\mathbf{G}'$ and split over $K$. We therefore have the sequence \eqref{laES} for $\mathbf{G}'$, and since $\mathbf{N} = \mathbf{N} \cap \mathbf{G}'$ this almost gives us sequence \eqref{laES} for $\mathbf{G}$. The problem is that we only know that the maps are equivariant for the action $\g' = \Lie(G')$.

\begin{theorem} \label{laESred}
The maps in \eqref{laES} are $\g$-equivariant.
\end{theorem}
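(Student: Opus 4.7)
The plan is to reduce $\g$-equivariance to $\g'$-equivariance (which is already known) plus $\mathfrak{z}$-equivariance, where $\mathfrak{z} = \mathfrak{z}(\g)$ denotes the centre of $\g$. Since $\mathbf{G}$ is reductive over a field of characteristic zero, we have the direct sum decomposition $\g = \g' \oplus \mathfrak{z}$, so it suffices to check that every map appearing in \eqref{laES} commutes with the action of $\mathfrak{z}$. The key observation is that $\mathfrak{z}$ acts by one and the same scalar character on every term of \eqref{laES}, namely by $\lambda|_{\mathfrak{z}}$. Once this is in hand, any $K$-linear map between two such terms is automatically $\mathfrak{z}$-equivariant, and we are done.

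To verify the scalar action, first I would use the $(\g, B)$-equivariant isomorphism $\mathcal{C}^{\la}_{\cs}(N, K_\nu) \cong \Ind_{\Bbar}^{G}(\nu)(N)$ of \eqref{suppN}, so that the $\g$-action is obtained by differentiating the right regular $G$-action. For any $z$ in the centre $Z(G)$ and $f \in \Ind_{\Bbar}^{G}(\nu)$, one has $(zf)(g) = f(gz) = f(zg) = \nu(z) f(g)$, using that $z \in T \subseteq \Bbar$ and is central; hence $Z(G)$ acts on $\Ind_{\Bbar}^{G}(\nu)(N)$ by the scalar $\nu(z)$, and differentiating shows that $\mathfrak{z}$ acts by $\nu|_{\mathfrak{z}}$. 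Specialising $\nu = w \cdot \lambda$ and using that $W$ fixes $\mathfrak{z}$ pointwise (since $W$ acts on $\h$ via the normaliser of $\mathbf{T}$, which centralises the centre), we have $w\mu|_{\mathfrak{z}} = \mu|_{\mathfrak{z}}$ for every $\mu \in \h^*$, and therefore $(w \cdot \lambda)|_{\mathfrak{z}} = (w(\lambda + \rho) - \rho)|_{\mathfrak{z}} = \lambda|_{\mathfrak{z}}$. The first term $V \otimes_{K} \mathcal{C}^{\sm}_{\cs}(N, K)$ also has $\mathfrak{z}$ acting by the scalar $\lambda|_{\mathfrak{z}}$: the centre acts on $V$ by its highest weight character and trivially on the smooth factor by construction of the $\g$-action on $\mathcal{C}^{\lp}_{\cs}(N, K_\nu)$ in \S\ref{ssubspaces}.

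The only genuine obstacle is bookkeeping: one must confirm that the $\g$-actions on $\mathcal{C}^{\pol}(N, K_\nu)$, $\mathcal{C}^{\lp}_{\cs}(N, K_\nu)$ and $\mathcal{C}^{\la}_{\cs}(N, K_\nu)$, transferred through the isomorphisms and tensor product constructions in \S\ref{ssubspaces} and \S\ref{sdefmaps}, all restrict to the action of $\mathfrak{z}$ by multiplication by the scalar $\nu|_{\mathfrak{z}}$. This is automatic from the induced-representation picture, since each construction is built compatibly with the $G$-equivariant identification of \eqref{suppN}, and the $\mathfrak{z}$-action is then read off as a scalar on one-dimensional highest/lowest weight data. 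No further Lie-theoretic input is needed beyond the standard fact that $W$ acts trivially on $\mathfrak{z}$, so no real difficulty arises.
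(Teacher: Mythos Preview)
Your proposal is correct and follows essentially the same route as the paper: reduce to $\mathfrak{z}$-equivariance via $\g = \g' + \mathfrak{z}$, show that $\mathfrak{z}$ acts on every term by the scalar $\lambda|_{\mathfrak{z}}$, and invoke the fact that $W$ fixes $\mathfrak{z}$ pointwise to identify $(w\cdot\lambda)|_{\mathfrak{z}}$ with $\lambda|_{\mathfrak{z}}$. The only cosmetic differences are that the paper works at the group level (computing the action of $Z$ on $\mathcal{C}^{\la}_{\cs}(N,K_{w\cdot\lambda})$ via the explicit formula $(zf)(n) = (w\cdot\lambda)(z)\,f(z^{-1}nz)$ and then differentiating) rather than directly in $\Ind_{\Bbar}^{G}(\nu)$, and it spells out why $Z$ acts on $V$ through $\lambda$ by noting that $V$ is spanned by the $G$-orbit of a highest-weight vector.
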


\begin{proof}
Let $\mathbf{Z}$ denote the center of $\mathbf{G}$. It is defined over $L$ by 12.1.7(b) of \cite{SpringerLAG}. Write $Z$ for $\mathbf{Z}(L)$ and $\mathfrak{z}$ for the corresponding Lie subalgebra of $\g$. Since $\mathbf{G} = \mathbf{G}' \mathbf{Z}$ we have $G = G' Z$, whence it follows that $\g = \g' + \mathfrak{z}$. Since $\mathbf{Z}$ centralises $\mathbf{T}$ the Weyl groups for $\mathbf{G}'$ and $\mathbf{G}$ are canonically isomorphic. Applying the results of the last section to $\mathbf{G}'$ we get the required sequence of maps of $\env(\g')$-modules. It only remains to show that the maps are morphisms of $\env(\g)$-modules, and to do this we need only check that the maps are equivariant for the action of $\mathfrak{z}$.

The action of $T$ on a highest weight vector $v$ of $V$ is via $\lambda$. Since $V$ is an irreducible representation of $G$, the set $\{ g v: g \in G \}$ spans $V$ (over $K$). Since $Z$ is contained in the centre of $G$ the action of $Z$ on $g v$ is via $\lambda$, as $z g v = g z v = \lambda(z) g v$. Hence $Z$ acts on all of $V$ via $\lambda$. Since $\mathcal{C}^{\sm}_{\cs}(N, K)$ has the trivial action of $\g$, the action of $\mathfrak{z}$ on $V \tensor_{K} \mathcal{C}^{\sm}_{\cs}(N, K)$ is via $\lambda$.

Let us now consider the action of $Z$ on $\mathcal{C}^{\la}_{\cs}(N, K_{w \cdot \lambda})$. For $z \in Z$, $x \in N$ and $f \in \mathcal{C}^{\la}_{\cs}(N, K_{w \cdot \lambda})$ we have
\[
(z f) (g) = (w \cdot \lambda)(z) f (z^{-1} g z) = (w \cdot \lambda)(z) f(g)
\]
So $Z$ acts through $w \cdot \lambda : T \to K^{\times}$, and hence $\mathfrak{z}$ acts through $w \cdot \lambda : \h \to K$.

The action of $W$ on $X(\mathbf{T})$ comes from the conjugation action of $\mathbf{N_{G} (T)}$ on $\mathbf{T}$, which is trivial on $\mathbf{Z} \subseteq \mathbf{T}$. Hence $\lambda|_{Z} = (w \cdot \lambda)|_{Z}$ for all $w \in W$, and thus all the maps in the sequence are $\mathfrak{z}$-equivariant.
\end{proof}

\section{Exactness of the First Three Terms} \label{sfirstthreeterms}

Fix $\mu \in X(\mathbf{T})$. We will now construct a basis for $\mathcal{C}^{\pol} (N, K_{\mu})$ which diagonalises the action of $\h$.

\begin{theorem} \label{polbasis}
We can find $T_1, \cdots, T_r \in \mathcal{C}^{\pol} (N, K_{\mu})$ such that $\mathcal{C}^{\pol} (N, K_{\mu}) \cong K[T_1, \cdots, T_r]$ and $T_1^{m_1} \cdots T_r^{m_r}$ is a weight vector of weight $\mu - \sum m_i \alpha_i$.
\end{theorem}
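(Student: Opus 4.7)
My plan is to use the standard structure theorem for split unipotent groups, applied over $K$ where $\mathbf{G}$ (and hence $\mathbf{N}$ and $\mathbf{T}$) splits.

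First, I would fix the ordering $\alpha_1, \ldots, \alpha_r$ of $\Phi^+$ chosen in the proof of Lemma~\ref{subcat}. Since $\mathbf{G}$ splits over $K$, we have the root subgroup $\mathbf{U}_{\alpha_i} \subseteq \mathbf{N}_K$ for each $i$, together with the one-parameter subgroup parametrisation $\mathbf{G}_a \xrightarrow{\sim} \mathbf{U}_{\alpha_i}$, $x \mapsto \exp(x E_{\alpha_i})$. The standard structure theorem for the unipotent radical of a split Borel then says that the product morphism
\[
\prod_{i=1}^{r} \mathbf{U}_{\alpha_i} \longrightarrow \mathbf{N}_K, \qquad (u_1, \ldots, u_r) \mapsto u_1 \cdots u_r,
\]
is an isomorphism of $K$-varieties. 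Pulling back the $i$-th coordinate gives an element $T_i \in \mathcal{C}^{\pol}(N, K)$, and the isomorphism above identifies $\mathcal{C}^{\pol}(N, K)$ with the polynomial ring $K[T_1, \ldots, T_r]$. Fixing a nonzero basis vector $v_\mu \in K_\mu$ and tensoring gives the desired $K$-algebra isomorphism $\mathcal{C}^{\pol}(N, K_\mu) \cong K[T_1, \ldots, T_r]$, where I view $T_i \otimes v_\mu$ as an element of $\mathcal{C}^{\pol}(N, K_\mu)$.

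Next I would verify the weight formula. Recall that $T$ acts on $\mathcal{C}^{\pol}(N, K)$ by $(t f)(n) = f(t^{-1} n t)$ and on $K_\mu$ by $\mu$, so $T$ acts on $\mathcal{C}^{\pol}(N, K_\mu)$ by the tensor product. For $n = \exp(x E_{\alpha_i}) \in \mathbf{U}_{\alpha_i}(K)$ the relation $\mathrm{Ad}(t) E_{\alpha_i} = \alpha_i(t) E_{\alpha_i}$ gives $t^{-1} n t = \exp\bigl(\alpha_i(t)^{-1} x\, E_{\alpha_i}\bigr)$, so
\[
(t \cdot T_i)(n) = T_i(t^{-1} n t) = \alpha_i(t)^{-1} T_i(n).
\]
Because the $T_i$ are defined via the product decomposition above, the same root-subgroup computation (combined with the fact that on a product of commuting factors the coordinates are all simultaneously eigenfunctions under the diagonal $T$-action by conjugation) shows $T_i$ is globally a weight vector of weight $-\alpha_i$ in $\mathcal{C}^{\pol}(N, K)$. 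Hence $T_i \otimes v_\mu$ has weight $\mu - \alpha_i$ in $\mathcal{C}^{\pol}(N, K_\mu)$, and since $T$ acts on $\mathcal{C}^{\pol}(N, K_\mu)$ by algebra automorphisms, the monomial $T_1^{m_1} \cdots T_r^{m_r}$ has weight $\mu - \sum m_i \alpha_i$, as required.

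There is no serious obstacle here — the statement is essentially a packaging of the split-unipotent structure theorem together with the standard conjugation formula for root subgroups. The only mild care needed is that, although the individual root subgroups $\mathbf{U}_{\alpha_i}$ need not be defined over $L$ (since $\mathbf{T}$ may fail to split over $L$), everything in sight is taking place over $K$, where by hypothesis $\mathbf{G}$ and $\mathbf{T}$ do split, so all the root-subgroup tools are available.
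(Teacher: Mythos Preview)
Your argument is correct and takes a genuinely different route from the paper's. The paper works on the Verma-module side: it transports the dual PBW basis $\{\eps_{m_1,\ldots,m_r}\}$ of $\verma{\bbar}{\mu}^{\vee}$ through the isomorphism $\zeta_\mu$, sets $T_i = \zeta_\mu(\eps_{0,\ldots,1,\ldots,0})$, and then needs two auxiliary lemmas (Lemmas~\ref{menial1} and \ref{menial2}), proved by repeated applications of the Leibniz rule at the identity, to verify that $\zeta_\mu(\eps_{m_1,\ldots,m_r}) = T_1^{m_1}\cdots T_r^{m_r}/m_1!\cdots m_r!$. Your approach is more direct and geometric: you read the $T_i$ straight off the root-subgroup product decomposition of $\mathbf{N}_K$ and compute the weights from the $T$-conjugation action, bypassing the Verma duality and the Leibniz-rule lemmas entirely. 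The paper's route has the mild advantage of making the relationship between the polynomial basis and the PBW basis on the Verma side completely explicit, which fits its overall strategy of passing back and forth via $\zeta_\mu$; but for the theorem as stated your argument is cleaner.

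One small wording point: your parenthetical about ``a product of commuting factors'' is not quite right---the root subgroups $\mathbf{U}_{\alpha_i}$ do not commute in general. What actually makes the weight computation go through globally is that conjugation by $t \in T$ preserves each $\mathbf{U}_{\alpha_i}$ individually, so if $n = u_1\cdots u_r$ with $u_i \in \mathbf{U}_{\alpha_i}$ then $t^{-1}nt = (t^{-1}u_1 t)\cdots(t^{-1}u_r t)$ is again in product-decomposition form, and hence $T_i(t^{-1}nt) = \alpha_i(t)^{-1}T_i(n)$ holds on the nose. No commutativity is needed.
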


\begin{proof}
Recall we have an isomorphism of $\env(\g)$-modules $\zeta_{\mu}: \verma{\bbar}{\mu}^{\vee} \to \mathcal{C}^{\pol}(N, K_{\mu})$. Fix an ordering $\alpha_1, \cdots, \alpha_r$ of $\Phi^{+}$ and write $E_i$ for $E_{\alpha_i}$. By the Poincar\'{e}-Birkhoff-Witt theorem $\{ E_1^{n_1} \cdots E_r^{n_r} \tensor 1 : n_{i} \geq 0$ for all $i \}$ is a basis for $\verma{\bbar}{\mu}$. Let $\{ \eps_{m_1, \cdots, m_r} : m_{i} \geq 0$ for all $i \}$ be the dual basis for $\verma{\bbar}{\mu}^{\vee}$, defined by
\[
\eps_{m_1, \cdots, m_r} (E_1^{n_1} \cdots E_r^{n_r} \tensor 1) =
\begin{cases}
1 & \text{if $m_i = n_i$ for all $i$} \\
0 & \text{else.}
\end{cases}
\]
Since $E_1^{n_1} \cdots E_r^{n_r} \tensor 1$ has weight $-\mu + \sum n_i \alpha_i$, it follows that $\eps_{m_1, \cdots, m_r}$ has weight $\mu - \sum m_i \alpha_i$. (Recall that $X \in \g$ acts on $\phi \in \verma{\bbar}{\mu}^{\vee}$ via $X \phi (u \tensor 1) = \phi (-X u \tensor 1)$ for all $u \tensor 1 \in \verma{\bbar}{\mu}$.)

We define $T_i \in \mathcal{C}^{\pol} (N, K_{\mu})$ by $T_i = \zeta_{\mu} (\eps_{0, \cdots, 0, 1, 0, \cdots, 0})$, where the 1 is in the $i$th place. Using Lemmas~\ref{menial1} and \ref{menial2} which follow this proof we see that $\zeta_{\mu} (\eps_{m_1, \cdots, m_r}) = T_1^{m_1} \cdots T_r^{m_r} / m_1 ! \cdots m_r !$, so $\mathcal{C}^{\pol} (N, K_{\mu}) = K[T_1, \cdots, T_r]$, and $T_1^{m_1} \cdots T_r^{m_r}$ has weight $\mu - \sum m_i \alpha_i$ since $\zeta_{\mu}$ is $\env(\g)$-equivariant.
\end{proof}

\begin{remark}
If $\mu = w \cdot \lambda$ and $\alpha_{r} \in \Delta$ such that $l(s_{\alpha_{r}} w) = l(w) + 1$ then $\theta_{\alpha_{r}, w \cdot \lambda}^{\la}$ is a non-zero scalar multiple of $(\frac{\partial}{\partial T_r} )^{w \cdot \lambda(H_{\alpha}) + 1}$.
\end{remark}

Here are the two lemmas about $\zeta_{\mu}$ which were used in the proof.

\begin{lemma} \label{menial1}
$\zeta_{\mu} (\eps_{m_1, \cdots, m_r}) = \zeta_{\mu} (\eps_{m_1, 0, \cdots, 0}) \zeta_{\mu} (\eps_{0, m_2, 0, \cdots, 0}) \cdots \zeta_{\mu} (\eps_{0, \cdots, 0, m_r})$
\end{lemma}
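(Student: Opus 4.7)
The plan is to apply $\zeta_\mu^{-1}$ to both sides and check they agree in $\verma{\bbar}{\mu}^\vee$. First I fix an identification $K_\mu \cong K$ as $K$-vector spaces so that pointwise multiplication of $K_\mu$-valued polynomial functions is well-defined (only the $T$-action distinguishes $K_\mu$ from $K$, not the underlying vector space); this places the right-hand side unambiguously in $\mathcal{C}^{\pol}(N, K_\mu)$. Using the formula $\zeta_\mu^{-1}(f)(u \otimes 1) = S(u) f(e)$, it suffices to verify
\[
S(u)\bigl[\textstyle\prod_{j=1}^r f_j\bigr](e) = \eps_\mathbf{m}(u \otimes 1) = \delta_{\mathbf{n}, \mathbf{m}}
\]
for $u = E_1^{n_1} \cdots E_r^{n_r}$ ranging over the PBW basis of $\env(\n)$, where $f_j := \zeta_\mu(\eps_{0, \ldots, 0, m_j, 0, \ldots, 0})$.

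The key point is that the $\env(\n)$-action on $\mathcal{C}^{\pol}(N, K_\mu) = \mathcal{C}^{\pol}(N, K) \otimes_K K_\mu$ comes from differentiating right translation by $N$, is trivial on the $K_\mu$-factor, and hence acts by derivations on the polynomial factor. By the iterated Leibniz rule,
\[
v(g_1 \cdots g_r) = \sum_{(v)} (v_{(1)} g_1) \cdots (v_{(r)} g_r),
\]
where $\Delta^{r-1}(v) = \sum_{(v)} v_{(1)} \otimes \cdots \otimes v_{(r)}$ is the iterated coproduct of the cocommutative Hopf algebra $\env(\n)$, determined by $\Delta(X) = X \otimes 1 + 1 \otimes X$ on $\n$ and multiplicativity.

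I apply this with $v = S(u) = (-1)^{\sum n_i} E_r^{n_r} \cdots E_1^{n_1}$, expanding $\Delta^{r-1}$ using multiplicativity together with the standard formula
\[
\Delta^{r-1}(E_i^{n_i}) = \sum_{k^{(1)}_i + \cdots + k^{(r)}_i = n_i} \binom{n_i}{k^{(1)}_i, \ldots, k^{(r)}_i}\, E_i^{k^{(1)}_i} \otimes \cdots \otimes E_i^{k^{(r)}_i}.
\]
Applied to $\bigotimes_j f_j$ and evaluated at $e$, the $j$th tensor factor becomes $(E_r^{k_r^{(j)}} \cdots E_1^{k_1^{(j)}}) f_j(e)$; rewriting this as $(-1)^{\sum_i k_i^{(j)}} S(E_1^{k_1^{(j)}} \cdots E_r^{k_r^{(j)}}) f_j(e) = (-1)^{\sum_i k_i^{(j)}} \eps_{0, \ldots, m_j, \ldots, 0}(E_1^{k_1^{(j)}} \cdots E_r^{k_r^{(j)}} \otimes 1)$ via the defining property of $f_j$, this factor vanishes unless $\mathbf{k}^{(j)} = (0, \ldots, m_j, \ldots, 0)$, in which case it contributes $(-1)^{m_j}$.

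The joint constraints $\mathbf{k}^{(j)} = (0, \ldots, m_j, \ldots, 0)$ for every $j$ together with $\sum_j k_i^{(j)} = n_i$ force $\mathbf{n} = \mathbf{m}$; in the unique surviving term all multinomial coefficients equal $1$, and the total sign $(-1)^{\sum n_i} \prod_j (-1)^{m_j} = 1$, giving $S(u)\bigl[\prod_j f_j\bigr](e) = \delta_{\mathbf{n}, \mathbf{m}}$. Hence $\zeta_\mu^{-1}(\prod_j f_j) = \eps_\mathbf{m}$, and the lemma follows from injectivity of $\zeta_\mu^{-1}$. The main obstacle is the bookkeeping of signs and multinomial factors across the two applications of $S$; conceptually, what is happening is that $\zeta_\mu$ intertwines pointwise multiplication on $\mathcal{C}^{\pol}(N, K_\mu)$ with the convolution product on $\verma{\bbar}{\mu}^\vee$ dual to the cocommutative coproduct on $\env(\n)$, and in that convolution product $\prod_j \eps_{0, \ldots, m_j, \ldots, 0} = \eps_\mathbf{m}$ by a direct computation.
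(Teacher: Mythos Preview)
Your proof is correct and follows essentially the same approach as the paper's: both compute $S(E_1^{n_1}\cdots E_r^{n_r})\bigl[\prod_j f_j\bigr](e)$ via the iterated Leibniz rule and identify it as $\delta_{\mathbf{n},\mathbf{m}}$, which is exactly the defining property of $\zeta_\mu(\eps_{\mathbf{m}})$. The paper's version is much terser---it writes out the Leibniz rule once for a product of two functions and then says ``by repeated applications of this rule'' the result follows---whereas you carry out the full bookkeeping using the Hopf algebra coproduct on $\env(\n)$, tracking the multinomial coefficients and the signs coming from the two appearances of $S$. Your final conceptual remark, that $\zeta_\mu$ intertwines pointwise multiplication with the convolution product on $\verma{\bbar}{\mu}^\vee$ dual to the coproduct on $\env(\n)$, is the clean statement underlying both arguments, and makes transparent why the lemma holds.
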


\begin{proof}
The Leibniz rule says that for any $X, Y \in \env(\n)$ and $f, g \in \mathcal{C}^{\pol} (N, K_{\mu})$ we have $(XY fg)(e) = (XY f)(e) g(e) + (X f)(e) (Y g)(e) + (Y f)(e) (X g)(e) + f(e) (XY g)(e)$.
By repeated applications of this rule we may conclude that $S(E_r^{n_r} \cdots E_1^{n_1}) ( \zeta_{\mu} (\eps_{m_1, 0, \cdots, 0}) \zeta_{\mu} (\eps_{0, m_2, 0, \cdots, 0}) \cdots \zeta_{\mu} (\eps_{0, \cdots, 0, m_r}) )(e) = 0$ unless $m_i = n_i$ for all $i$, in which case it equals $1$. This is the defining characteristic of $\zeta_{\mu} (\eps_{m_1, \cdots, m_r})$.
\end{proof}

\begin{lemma} \label{menial2}
For all $m \geq 1$ we have $\zeta_{\mu} (\eps_{0, \cdots, m, \cdots, 0}) = \frac{1}{m!} \zeta_{\mu} (\eps_{0, \cdots, 1, \cdots, 0})^m$, where all the indices are $0$ except the $i$th.
\end{lemma}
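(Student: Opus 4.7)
The plan is to verify directly that $T^m/m!$ satisfies the defining property of $\zeta_{\mu}(\eps_{0,\ldots,m,\ldots,0})$ under the isomorphism $\zeta_{\mu}$. Under this isomorphism, $f \in \mathcal{C}^{\pol}(N, K_{\mu})$ corresponds to the functional $u \tensor 1 \mapsto S(u) f(e)$ on $\verma{\bbar}{\mu}$, so using $S(E_1^{n_1}\cdots E_r^{n_r}) = (-1)^{|\mathbf{n}|} E_r^{n_r}\cdots E_1^{n_1}$ it is enough to prove
\[
E_r^{n_r}\cdots E_1^{n_1}(T^m)(e) = (-1)^m\, m!\, \delta_{\mathbf{n}, m\mathbf{e}_i}
\]
for every multi-index $\mathbf{n} = (n_1, \ldots, n_r)$.

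The crucial input is that $\n$ acts on $\mathcal{C}^{\pol}(N, K_{\mu}) = \mathcal{C}^{\pol}(N, K) \tensor_{K} K_{\mu}$ by left-invariant vector fields on the first factor (with trivial action on $K_{\mu}$), and hence by derivations of the ring structure on $\mathcal{C}^{\pol}(N, K_{\mu})$; this is precisely the input that powers the iterated Leibniz rule used in the proof of Lemma~\ref{menial1}. Applying that Leibniz rule, I would expand $E_r^{n_r}\cdots E_1^{n_1}(T^m)$ as a sum indexed by functions $\phi \colon \{1, \ldots, |\mathbf{n}|\} \to \{1, \ldots, m\}$ that distribute the $|\mathbf{n}|$ derivations among the $m$ copies of $T$. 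For each $\phi$, the resulting term is a product $\prod_{k=1}^{m} (D_{k} T)$, where $D_{k}$ is the ordered subword of $E_r^{n_r}\cdots E_1^{n_1}$ consisting of those derivations assigned to the $k$th copy (with the original right-to-left ordering preserved).

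I would then argue that only very specific $\phi$'s contribute after evaluation at $e$. Since $T(e) = \eps_{\mathbf{e}_i}(1 \tensor 1) = 0$, any copy with empty $D_{k}$ kills the entire term. The defining property of $T$ gives $(E_r^{a_r}\cdots E_1^{a_1})T(e) = -\delta_{\mathbf{a}, \mathbf{e}_i}$, so a non-vanishing $(D_{k} T)(e)$ forces $D_{k} = E_i$, with contribution $-1$. Consequently each of the $m$ copies must receive exactly one derivation, and every derivation appearing in $E_r^{n_r}\cdots E_1^{n_1}$ must equal $E_i$; this forces $\mathbf{n} = m\mathbf{e}_i$. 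The surviving terms are then parameterized by the $m!$ bijections between the $m$ occurrences of $E_i$ and the $m$ slots, each contributing $(-1)^m$, producing the claimed total $(-1)^m\, m!$.

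I do not foresee a genuine obstacle. The only conceptual point is confirming the derivation property of the $\n$-action on $\mathcal{C}^{\pol}(N, K_{\mu})$, which is immediate from the construction in \S\ref{ssubspaces} and is already implicit in Lemma~\ref{menial1}; the rest of the argument is sign bookkeeping together with a transparent combinatorial count.
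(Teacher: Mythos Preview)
Your proof is correct and rests on the same key input as the paper's --- the derivation property of the $\n$-action on $\mathcal{C}^{\pol}(N,K_{\mu})$, which yields the Leibniz rule already invoked in Lemma~\ref{menial1}. The difference is only organizational: the paper proceeds by induction on $m$, writing $T^m = T \cdot T^{m-1}$ and applying the \emph{two-factor} Leibniz rule to pick out the single derivation landing on the lone $T$ (giving the factor $\binom{m}{1}=m$), then invoking the inductive hypothesis for $T^{m-1}$. You instead unfold the induction and apply the \emph{$m$-factor} Leibniz rule in one shot, counting the $m!$ bijections directly. Your version is slightly more self-contained; the paper's is a touch more economical since it recycles the two-factor computation already done in Lemma~\ref{menial1} rather than setting up a general multinomial expansion. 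Either way the content is the same.
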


\begin{proof}
We proceed by induction on $m$. For $m = 1$ the result is trivial. Suppose it is true for $m - 1$. By the same argument as in Lemma~\ref{menial1} we have that $S(E_r^{n_r} \cdots E_1^{n_1}) ( \zeta_{\mu} (\eps_{0, \cdots, 1, \cdots, 0}) \zeta_{\mu} (\eps_{0, \cdots, m - 1, \cdots, 0}) ) (e) = 0$ unless $n_i = m$ and $n_j = 0$ for all $j \neq i$, in which case it is
\[
\binom{m}{1} S(E_i) ( \zeta_{\mu} (\eps_{0, \cdots, 1, \cdots, 0} ) ) (e) \ S(E_i^{m - 1}) ( \zeta_{\mu} (\eps_{0, \cdots, m - 1, \cdots, 0} ) ) (e)
\]
which is $m$. Hence
\begin{align*}
\zeta_{\mu} (\eps_{0, \cdots, m, \cdots, 0}) & = \frac{1}{m} \zeta_{\mu} (\eps_{0, \cdots, 1, \cdots, 0} ) \ \zeta_{\mu} ( \eps_{0, \cdots, m - 1, \cdots, 0}) \\
& = \frac{1}{m} \zeta_{\mu} (\eps_{0, \cdots, 1, \cdots, 0} ) \ \frac{1}{(m - 1)!} \zeta_{\mu} (\eps_{0, \cdots, 1, \cdots, 0})^{m-1} \\
& = \frac{1}{m!} \zeta_{\mu} (\eps_{0, \cdots, 1, \cdots, 0})^m. \qedhere
\end{align*}
\end{proof}

Let $\mathbb{B}$ denote the rigid analytic closed unit ball defined over $L$. Let $X$ be a compact, open subset of $N$ such that there is a locally analytic isomorphism $X \cong \mathbb{B}(L)$ which is compatible with all charts of $N$. We write $\mathcal{C}^{\pol} (X, K_{\mu})$ for the subspace of $\mathcal{C}^{\la} (X, K_{\mu})$ given by restricting functions in $\mathcal{C}^{\pol} (N, K_{\mu})$ to $X$. Then $\mathcal{C}^{\pol} (X, K_{\mu}) \subset \mathcal{C}^{\an} (X, K_{\mu})$ and it inherits the subspace norm coming from the Gauss norm.

Since $\g$ acts on $\mathcal{C}^{\la}_{\cs} (N, K_{\mu})$ by differential operators, $\mathcal{C}^{\la} (N, K_{\mu})(X)$ is an $\env(\g)$-invariant subspace. Using the natural isomorphism we transfer this action of $\env(\g)$ to $\mathcal{C}^{\la} (X, K_{\mu})$, and to its $\env(\g)$-invariant subspaces $\mathcal{C}^{\an} (X, K_{\mu})$ and $\mathcal{C}^{\pol} (X, K_{\mu})$. This makes the map
\begin{align*}
\mathcal{C}^{\pol} (N, K_{\mu}) \longrightarrow \mathcal{C}^{\pol} (X, K_{\mu}) && f \longmapsto f|_{X}
\end{align*}
an isomorphism of $\env(\g)$-modules.

We now use the basis we have just constructed for $\mathcal{C}^{\pol} (N, K_{\mu})$ to study $\mathcal{C}^{\pol} (X, K_{\mu})$.

\begin{lemma} \label{sum}
Any $f \in \mathcal{C}^{\an} (X, K_{\mu})$ can be written uniquely as $\sum_{\nu \in Z_{\mu}} f_{\nu}$ where $Z_{\mu} \subseteq \h^{*}$ is the (countable) set of weights of $\mathcal{C}^{\pol} (X, K_{\mu})$ and $f_{\nu} \in \mathcal{C}^{\pol} (X, K_{\mu})_{\nu}$ for each $\nu \in Z_{\mu}$.
\end{lemma}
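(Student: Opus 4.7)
The plan is to transfer the question to the Tate algebra in the coordinates $T_1, \ldots, T_r$ provided by Theorem~\ref{polbasis} and then regroup a convergent power series expansion by weight.

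First, I would use Theorem~\ref{polbasis} together with the compatibility of the isomorphism $X \cong \mathbb{B}(L)$ with the charts of $N$ to identify $\mathcal{C}^{\an}(X, K_\mu)$ with the Tate algebra in (a suitable rescaling of) the $T_i$. After this identification, every $f \in \mathcal{C}^{\an}(X, K_\mu)$ has a unique expansion $f = \sum_{m \in \N^r} c_m T_1^{m_1} \cdots T_r^{m_r}$ with $|c_m| \to 0$ as $|m| \to \infty$, and $\mathcal{C}^{\pol}(X, K_\mu)_\nu$ is the $K$-span of those monomials $T^m$ with $\mu - \sum m_i \alpha_i = \nu$. Thus $Z_\mu = \{\mu - \sum m_i \alpha_i : m \in \N^r\}$ is indeed countable.

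Second, the crucial combinatorial input is that because $\Phi^+$ lies in an open half-space of the real span of $\h^*$, each $\nu \in Z_\mu$ has only finitely many preimages under $m \mapsto \mu - \sum m_i \alpha_i$. Concretely, applying a linear functional $\ell$ on $\h^*$ that is positive on every element of $\Phi^+$ to $\mu - \nu$ yields $\sum m_i \ell(\alpha_i)$, which bounds $\sum m_i$ and hence each $m_i$. Consequently $f_\nu := \sum_{m : \mu - \sum m_i \alpha_i = \nu} c_m T^m$ is a finite sum, so a genuine polynomial in $\mathcal{C}^{\pol}(X, K_\mu)_\nu$. The same bound combined with $|c_m| \to 0$ forces $\|f_\nu\| \to 0$ as $\nu$ leaves any finite subset of $Z_\mu$, so $\sum_{\nu \in Z_\mu} f_\nu$ converges to $f$ in the Gauss norm (non-archimedean summability being implied by norms going to zero). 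Uniqueness is automatic: any decomposition $f = \sum g_\nu$ with $g_\nu \in \mathcal{C}^{\pol}(X, K_\mu)_\nu$ re-expands to a representation of $f$ as a convergent series in the $T^m$, which must agree coefficient by coefficient with the unique Tate algebra expansion.

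The main technical obstacle I anticipate is the first step, namely verifying that the coordinates $T_i$ produced by the abstract dual-Verma construction really do serve as Tate algebra generators on $X$. This should follow from the fact that $\mathbf{N} \cong \A^r$ has the $T_i$ (twisted by the character $\mu$) as global algebraic coordinates, so the chart $X \cong \mathbb{B}(L)$ on $N$ is, up to a bounded rescaling, a polydisc in the $T_i$; everything else is bookkeeping on top of this identification.
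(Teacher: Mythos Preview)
Your proposal is correct and follows essentially the same approach as the paper: rescale the $T_i$ from Theorem~\ref{polbasis} to have sup-norm $1$ on $X$, identify $\mathcal{C}^{\an}(X, K_\mu)$ with the Tate algebra $K\langle T_1, \ldots, T_r\rangle$, and regroup the power series expansion by weight. You are in fact slightly more careful than the paper, which asserts without comment that each $f_\nu$ lies in $\mathcal{C}^{\pol}(X, K_\mu)_\nu$; your half-space argument via a functional $\ell$ positive on $\Phi^+$ is exactly the standard way to see that only finitely many $m$ contribute to a given weight.
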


\begin{proof}
Choose $T_1, \cdots, T_r \in \mathcal{C}^{\pol} (N, K_{\mu})$ as in Theorem~\ref{polbasis}, but rescale them so that $\sup \{T_i(x) : x \in X \} = 1$. Replacing each $T_i$ with its restriction to $X$ we get $\mathcal{C}^{\pol} (X, K_{\mu}) = K[T_1, \cdots, T_r]$, so $Z_{\mu} = \{ \mu - \sum n_i \alpha_i : n_i \geq 0$ for all $i \}$. Then $\mathcal{C}^{\an} (X, K_{\mu})$ is the affinoid algebra
\[
K \langle T_1, \cdots, T_r \rangle = \left\{ \sum a_n T_1^{n_1} \cdots T_r^{n_r}: |a_n| \to 0 \text{ as } n_1 + \cdots + n_r \to \infty \right\}
\]
with norm $\| \sum a_n T_1^{n_1} \cdots T_r^{n_r} \| = \sup |a_n|$. Given $f = \sum_{n} a_n T_1^{n_1} \cdots T_r^{n_r} \in K \langle T_1, \cdots, T_r \rangle$, for any $\nu \in Z_{\mu}$ we define $f_{\nu} = \displaystyle \sum_{n: \sum n_i = \mu - \nu} a_n T_1^{n_1} \cdots T_r^{n_r}$. This is in $\mathcal{C}^{\pol} (X, K_{\mu})_{\nu}$, and we clearly have $f = \sum_{\nu \in Z_{\mu}} f_{\nu}$.

Suppose $f = \sum_{\nu \in Z_{\mu}} f_{\nu} = \sum_{\nu \in Z_{\mu}} f'_{\nu}$, with $f_{\nu}$ and $f'_{\nu} \in \mathcal{C}^{\pol} (X, K_{\mu})_{\nu}$. Then $0 = \sum_{\nu \in Z_{\mu}} (f_{\nu} - f'_{\nu})$ and by considering coefficients of the $T_1^{n_1} \cdots T_r^{n_r}$ we see that $f_{\nu} = f'_{\nu}$ for all $\nu \in Z_{\mu}$. Thus the expression is unique.
\end{proof}

\begin{corollary} \label{cortosum}
For all $\mu, \eta \in X(\mathbf{T})$ we have $\mathcal{C}^{\an} (X, K_{\mu})_{\eta} = \mathcal{C}^{\pol} (X, K_{\mu})_{\eta}$
\end{corollary}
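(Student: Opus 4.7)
The containment $\mathcal{C}^{\pol} (X, K_{\mu})_{\eta} \subseteq \mathcal{C}^{\an} (X, K_{\mu})_{\eta}$ is immediate from the fact that $\mathcal{C}^{\pol} (X, K_{\mu}) \subseteq \mathcal{C}^{\an} (X, K_{\mu})$ as $\env(\g)$-modules, so the work is in the reverse inclusion. The plan is to take an arbitrary $f \in \mathcal{C}^{\an} (X, K_{\mu})_{\eta}$, apply the decomposition of Lemma~\ref{sum}, and use continuity of the $\h$-action to show that all homogeneous components of weight $\nu \neq \eta$ must vanish.

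More precisely, given $f \in \mathcal{C}^{\an} (X, K_{\mu})_{\eta}$, I would use Lemma~\ref{sum} to write $f = \sum_{\nu \in Z_{\mu}} f_{\nu}$ uniquely with $f_{\nu} \in \mathcal{C}^{\pol} (X, K_{\mu})_{\nu}$. Fix any $H \in \h$. The action of $\h$ on $\mathcal{C}^{\an} (X, K_{\mu})$ is by (continuous) differential operators, so applying $H$ term by term is legal:
\[
H \cdot f = \sum_{\nu \in Z_{\mu}} H \cdot f_{\nu} = \sum_{\nu \in Z_{\mu}} \nu(H)\, f_{\nu}.
\]
On the other hand, since $f$ has weight $\eta$,
\[
H \cdot f = \eta(H)\, f = \sum_{\nu \in Z_{\mu}} \eta(H)\, f_{\nu}.
\]
Subtracting and invoking the uniqueness clause of Lemma~\ref{sum}, we get $(\nu(H) - \eta(H))\, f_{\nu} = 0$ for every $\nu \in Z_{\mu}$ and every $H \in \h$. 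For each $\nu \neq \eta$ in $\h^{*}$ there exists some $H \in \h$ with $\nu(H) \neq \eta(H)$, and hence $f_{\nu} = 0$. Therefore $f = f_{\eta} \in \mathcal{C}^{\pol} (X, K_{\mu})_{\eta}$, as required.

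The only point requiring a moment's care is the exchange of $H$ with the (potentially infinite) sum $\sum_{\nu} f_{\nu}$; this is the step where one uses continuity of the action of $H$ on the affinoid algebra $\mathcal{C}^{\an} (X, K_{\mu}) = K \langle T_1, \ldots, T_r \rangle$ (as displayed in the proof of Lemma~\ref{sum}), noting that the regrouping $f = \sum_{\nu} f_{\nu}$ is just a rearrangement of the original convergent Gauss-norm expansion of $f$ in the $T_1^{n_1} \cdots T_r^{n_r}$. Once that is observed, the argument above is completely formal, and no further obstacle arises.
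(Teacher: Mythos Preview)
Your proof is correct and follows essentially the same route as the paper: decompose $f$ via Lemma~\ref{sum}, apply an arbitrary $H \in \h$, and use the uniqueness part of Lemma~\ref{sum} to force $f_{\nu} = 0$ for $\nu \neq \eta$. Your extra remark justifying the exchange of $H$ with the infinite sum is a point the paper passes over silently, so if anything your write-up is slightly more careful.
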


\begin{proof}
Clearly $\mathcal{C}^{\pol} (X, K_{\mu})_{\eta} \subseteq \mathcal{C}^{\an} (X, K_{\mu})_{\eta}$. Let $f$ be a non-zero element in $\mathcal{C}^{\an} (X, K_{\mu})_{\eta}$. By Lemma~\ref{sum} $f = \sum_{\nu \in Z_{\mu}} f_{\nu}$, so for all $Y \in \h$ we have $0 = \eta(Y) f - Y f = \sum_{\nu \in Z_{\mu}} (\eta(Y) - \nu(Y)) f_{\nu}$. Since the unique expression for $0$ is $\sum 0$ we must have $(\eta(Y) - \nu(Y)) f_{\nu} = 0$ for all $\nu \in Z_{\mu}$. Hence for all $\nu \neq \eta$ we have $f_{\nu} = 0$, and thus we must have $\eta \in Z_{\mu}$ and $f = f_{\eta} \in \mathcal{C}^{\pol} (X, K_{\mu})_{\eta}$.
\end{proof}

\begin{lemma} \label{anESlemma}
We get an exact sequence
\[
0 \to V \xrightarrow{\delta_{-1}} \mathcal{C}^{\an} (X, K_{\lambda}) \xrightarrow{\delta_{0}} \bigoplus_{w \in W^{(1)}} \mathcal{C}^{\an} (X, K_{w \cdot \lambda})
\]
by restricting the first three terms of \eqref{laES}.
\end{lemma}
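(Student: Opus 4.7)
The plan is to deduce the claim from the exactness of the polynomial sequence \eqref{polES} together with the weight-space decomposition of Lemma~\ref{sum} and the coincidence of polynomial and analytic weight spaces given by Corollary~\ref{cortosum}. The injectivity of $\delta_{-1}$ and the identity $\delta_0 \circ \delta_{-1} = 0$ already hold at the level of $\mathcal{C}^{\pol}$ by the exactness of \eqref{polES} and the $\env(\g)$-equivariant isomorphism $\mathcal{C}^{\pol}(N, K_\mu) \cong \mathcal{C}^{\pol}(X, K_\mu)$; they persist under the $\env(\g)$-equivariant inclusion $\mathcal{C}^{\pol}(X, K_\mu) \hookrightarrow \mathcal{C}^{\an}(X, K_\mu)$. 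So the real content is exactness at $\mathcal{C}^{\an}(X, K_\lambda)$.

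Given $f \in \mathcal{C}^{\an}(X, K_\lambda)$ with $\delta_0(f) = 0$, I would use Lemma~\ref{sum} to write $f = \sum_{\nu \in Z_\lambda} f_\nu$ with $f_\nu \in \mathcal{C}^{\pol}(X, K_\lambda)_\nu$, the series converging in the affinoid topology. Each component of $\delta_0$ is one of the maps $\theta^{\la}_{\alpha, \lambda}$, which by Theorem~\ref{psiextends} acts on $\mathcal{C}^{\an}(X, K_\lambda)$ via the formula $\Phi_\mu^{-1} \circ (u_\psi)_L \circ \Phi_\lambda$, i.e.\ as the left-regular differential operator attached to $u_\psi \in \env(\n)$. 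Such a differential operator is bounded on the Tate algebra $\mathcal{C}^{\an}(X, K_\lambda)$ (each basic partial derivative $\partial/\partial T_i$ has operator norm $\leq 1$ in the Gauss-norm basis of Theorem~\ref{polbasis}), so $\delta_0$ is continuous and $\delta_0(f) = \sum_\nu \delta_0(f_\nu)$ as a convergent series in $\bigoplus_{w \in W^{(1)}} \mathcal{C}^{\an}(X, K_{w \cdot \lambda})$.

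By the $\h$-equivariance of $\delta_0$, each $\delta_0(f_\nu)$ lies in the $\nu$-weight space of the target, which by Corollary~\ref{cortosum} is contained in $\bigoplus_w \mathcal{C}^{\pol}(X, K_{w \cdot \lambda})_\nu$. Applying the uniqueness part of Lemma~\ref{sum} to each summand $\mathcal{C}^{\an}(X, K_{w \cdot \lambda})$ forces $\delta_0(f_\nu) = 0$ for every $\nu$. The exactness of \eqref{polES} (transferred to $X$) then yields $f_\nu \in V \subseteq \mathcal{C}^{\pol}(X, K_\lambda)$. Since $V$ is finite-dimensional it has only finitely many weights, so only finitely many $f_\nu$ are nonzero and $f$ is actually a finite sum lying in $V$, as required.

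The main obstacle is the continuity step: one must verify that $\delta_0$ is continuous for the Banach (affinoid) topology on $\mathcal{C}^{\an}(X, K_\lambda)$, since this is what legalizes commuting $\delta_0$ with the convergent weight decomposition of $f$. Once that is in place via the differential-operator description of $\theta^{\la}$, the proof is essentially a formal manipulation of weight spaces combined with the exactness of \eqref{polES} and finite-dimensionality of $V$.
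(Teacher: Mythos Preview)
Your proof is correct and follows essentially the same route as the paper's: decompose $f$ into weight components via Lemma~\ref{sum}, show each component is killed by $\delta_0$ (the paper simply asserts $\theta_{\alpha,\lambda}(\sum_\nu f_\nu) = \sum_\nu \theta_{\alpha,\lambda}(f_\nu)$ where you argue continuity explicitly), invoke the exactness of \eqref{polES} to place each $f_\nu$ in the image of $V$, and use finite-dimensionality of $V$ to conclude. Your final step is slightly more direct than the paper's, which first deduces $f \in \mathcal{C}^{\pol}(X, K_\lambda)$ and then, via \eqref{lpES} and an analytic-continuation argument on the chart $X$, extracts a single $v \in V$ with $\delta_{-1}(v) = f$.
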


\begin{proof}
We define $\delta_{-1}$ by $v \mapsto d_{-1} (v \tensor \mathbf{1}_{X})|_{X}$. Let $\phi$ denote the map $V \to \mathcal{C}^{\pol}(N, K_{\lambda})$ from \eqref{polES}, so $d_{-1} (\sum v_i \tensor \mathbf{1}_{X_i}) = \sum \phi(v_i)_{| X_i}$. It follows that $\delta_{-1}(v) = \phi(v)|_{X}$, from which it is easily seen that $\delta_{-1}$ is well-defined, injective and has $\im \delta_{-1} \subseteq \mathcal{C}^{\pol} (X, K_{\lambda})$.

Given $f \in \mathcal{C}^{\an} (X, K_{\mu})$ we can extend it by $0$ to get $\overline{f} \in \mathcal{C}^{\la}_{\cs} (N, K_{\mu})$. We define $\delta_{0}$ by sending $f \mapsto d_{0}(\overline{f})|_{X}$, where each component is restricted to $X$. As $d_0 = \bigoplus_{\alpha \in \Delta} \theta_{\alpha, \lambda}^{\la}$, this is well-defined by Lemma~\ref{psiproperties}.\ref{partfour}. We know that $d_0 \circ d_{-1} = 0$ from the exactness of \eqref{lpES}, so for any $v \in V$
\[
\delta_{0} (\delta_{-1}(v)) = d_{0}( d_{-1}(v \tensor \mathbf{1}_{X})_{| X}) = d_{0}( d_{-1}(v \tensor \mathbf{1}_{X}))_{| X} = 0_{| X} = 0
\]
using Lemma~\ref{psiproperties}.\ref{partone}. Thus $\delta_{0} \circ \, \delta_{-1} = 0$, and it only remains to show that $\ker \delta_{0} \subseteq \im \delta_{-1}$.

Let $f \in \ker \delta_{0}$ and suppose we can show that $\ker \delta_{0} \subseteq \mathcal{C}^{\pol} (X, K_{\lambda})$. Then $\overline{f} \in \ker d_{0}$ is in $\mathcal{C}^{\lp}_{\cs} (N, K_{\lambda})$, so by exactness of \eqref{lpES} we can find $\sum v_i \tensor \mathbf{1}_{X_i} \in V \tensor \mathcal{C}^{\sm}_{\cs} (N, K)$ such that $d_{-1} (\sum v_i \tensor \mathbf{1}_{X_i}) = \overline{f}$. We may assume that the $X_i$ are disjoint charts of $N$ and that $X_i \subseteq X$ for all $i$. Let us compare $d_{-1} (\sum v_i \tensor \mathbf{1}_{X_i}) = \overline{f}$ with $d_{-1} (v_1 \tensor \mathbf{1}_{X}) = \phi(v_1)_{| X}$. They are both analytic on $X$ and they agree on the open subset $X_1$, so they must agree on all of $X$. Hence $f = \phi(v_1)|_{X} = \delta_{-1}(v_1)$ and we've shown that $f \in \im \delta_{-1}$.

To complete the proof it suffices to show that $\ker \delta_{0} \subseteq \mathcal{C}^{\pol} (X, K_{\lambda})$. Fix $f \in \ker \delta_{0}$. As $\delta_{0} = \bigoplus \theta_{\alpha, \lambda}$ where the sum is over all simple roots $\alpha$, this means $\theta_{\alpha, \lambda} (f) = 0$ for all $\alpha \in \Delta$. By Lemma~\ref{sum} for $\mu = \lambda$ we can write $f$ as $\sum_{\nu \in Z_{\lambda}} f_{\nu}$ with $f_{\nu} \in \mathcal{C}^{\pol}(X, K_{\lambda})_{\nu}$. Now for any $\alpha \in \Delta$, $\theta_{\alpha, \lambda}$ preserves weights and $\theta_{\alpha, \lambda} (\sum_{\nu \in Z_{\lambda}} f_{\nu}) = \sum_{\nu \in Z_{\lambda}} \theta_{\alpha, \lambda}(f_{\nu})$, so by the uniqueness of Lemma~\ref{sum} for $\mu = s_{\alpha} \cdot \lambda$ we must have $\theta_{\alpha, \lambda} (f_{\nu}) = 0$ for each $\nu \in Z_{\lambda}$. This is true for all simple roots, so $\delta_{0} (f_{\nu}) = 0$. By the exactness of \eqref{lpES} we can therefore find $v_{\nu} \in V$ such that $\delta_{-1} (v_{\nu}) = f_{\nu}$. In fact, as $\delta_{-1}$ is $\env(\g)$-equivariant we must have $v_{\nu} \in V_{\nu}$. But since $V$ is finite dimensional $V_{\nu} = 0$ for all but finitely many weights. Hence $f_{\nu} = 0$ for all but finitely many weights, and so $f \in \bigoplus_{\nu \in Z_{\lambda}} \mathcal{C}^{\pol} (X, K_{\lambda})_{\nu} = \mathcal{C}^{\pol} (X, K_{\lambda})$.
\end{proof}

\begin{theorem} \label{laESthreetermthm}
The first three terms of \eqref{laES}
\[
0 \to V \tensor_{K} \mathcal{C}^{\sm}_{\cs} (N, K) \xrightarrow{d_{-1}} \mathcal{C}^{\la}_{\cs} (N, K_{\lambda}) \xrightarrow{d_{0}} \bigoplus_{w \in W^{(1)}} \mathcal{C}^{\la}_{\cs} (N, K_{w \cdot \lambda})
\]
form an exact sequence.
\end{theorem}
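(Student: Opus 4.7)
The plan is to bootstrap from the local, analytic-on-a-chart version of exactness established in Lemma~\ref{anESlemma} to the global, locally-analytic-with-compact-support version. Injectivity of $d_{-1}$ and the identity $d_0 \circ d_{-1} = 0$ are straightforward: writing a general element of $V \tensor_K \mathcal{C}^{\sm}_{\cs}(N, K)$ as a finite sum $\sum v_i \tensor \mathbf{1}_{Y_i}$ with the $Y_i$ pairwise disjoint charts, we have $d_{-1}(\sum v_i \tensor \mathbf{1}_{Y_i}) = \sum \phi(v_i) \mathbf{1}_{Y_i}$, where $\phi: V \hookrightarrow \mathcal{C}^{\pol}(N, K_{\lambda})$ is the injection from \eqref{polES}; vanishing on each $Y_i$ forces each polynomial $\phi(v_i)$ to be zero on a non-empty open set, hence $\phi(v_i) = 0$, hence $v_i = 0$. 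The composition $d_0 \circ d_{-1} = 0$ is inherited from \eqref{lpES} since the natural inclusion $\mathcal{C}^{\lp}_{\cs}(N, K_{\nu}) \hookrightarrow \mathcal{C}^{\la}_{\cs}(N, K_{\nu})$ is $\env(\g)$-equivariant and intertwines the $\theta^{\lp}$ and $\theta^{\la}$ versions of the differentials.

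The heart of the argument is exactness in the middle. Fix $f \in \ker d_0$. Since $f$ is locally analytic with compact support, a standard partition argument produces finitely many pairwise disjoint charts $X_1, \dots, X_m$ of $N$ of the form required in Lemma~\ref{anESlemma} (each locally analytically isomorphic to $\mathbb{B}(L)$ compatibly with all charts of $N$) whose union contains $\supp f$ and on each of which $f$ is analytic. Because $d_0$ is the direct sum of the maps $\theta_{\alpha, \lambda}^{\la}$, Lemma~\ref{psiproperties}.\ref{partone} gives $d_0(f \mathbf{1}_{X_i}) = d_0(f)_{|X_i} = 0$ for each $i$. Identifying $f_{|X_i}$ with an element of $\mathcal{C}^{\an}(X_i, K_{\lambda})$, Lemma~\ref{anESlemma} applied to $X_i$ produces $v_i \in V$ with $\delta_{-1}(v_i) = f_{|X_i}$, i.e.\ $\phi(v_i)_{|X_i} = f_{|X_i}$ as functions on $X_i$. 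Therefore $d_{-1}(v_i \tensor \mathbf{1}_{X_i}) = \phi(v_i) \mathbf{1}_{X_i} = f \mathbf{1}_{X_i}$, and summing yields
\[
d_{-1}\!\left( \sum_{i=1}^{m} v_i \tensor \mathbf{1}_{X_i} \right) = \sum_{i=1}^{m} f \mathbf{1}_{X_i} = f,
\]
so $f \in \im d_{-1}$.

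The only technical point requiring care is the partition step at the start of the second paragraph: one must choose the covering charts pairwise disjoint, of the particular rigid-analytic form required to invoke Lemma~\ref{anESlemma}, and fine enough that $f$ is analytic on each. This follows from standard structure theory of locally $L$-analytic manifolds (any compact open subset is a finite disjoint union of charts isomorphic to $\mathbb{B}(L)$, and any such decomposition may be refined so that a prescribed locally analytic function becomes analytic on every piece). Granted this, everything else is a bookkeeping matter in the commutation of $d_0$, $d_{-1}$ and $\phi$ with restriction to open-closed subsets, already encapsulated in Lemma~\ref{psiproperties}.
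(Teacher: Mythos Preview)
Your proof is correct and follows essentially the same approach as the paper: reduce to charts on which $f$ is analytic and apply Lemma~\ref{anESlemma} on each piece, then sum. Your write-up is in fact slightly more explicit than the paper's (you spell out why $d_0(f\mathbf{1}_{X_i})=0$ via Lemma~\ref{psiproperties}.\ref{partone} and give a direct argument for injectivity of $d_{-1}$, whereas the paper simply cites the exactness of \eqref{lpES}), but there is no substantive difference.
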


\begin{proof}
It follows from the exactness of \eqref{lpES} that $d_{-1}$ is injective and $d_{0} \circ d_{-1} = 0$. It only remains to prove that $\ker d_{0} \subseteq \im d_{-1}$.

Let us fix $f \in \ker d_{0}$. By the definition of $f$ being locally analytic with compact support, we can find a finite set of disjoint charts $\{ X_i : i \in I \}$ of $N$ such that $f|_{X_i}$ is analytic for each $i$ and $f$ is $0$ outside $\bigcup X_i$. Applying Lemma~\ref{anESlemma} with $X = X_i$ we get $v_i \in V$ such that $f_{| X_i} = d_{-1} (v_i \tensor \mathbf{1}_{X_i})$. Hence $f = d_{-1} (\sum_{i \in I} v_i \tensor \mathbf{1}_{X_i})$ and we deduce that $\ker d_{0} \subseteq \im d_{-1}$.
\end{proof}

\section{Locally analytic principal series for subgroups with an Iwahori factorisation} \label{slocan}

In this section we complete the proof that \eqref{laES} is exact. To do this we have to introduce a particular kind of open compact subgroup of $G$.

\begin{definition}
We say an open compact subgroup $G_1 \subseteq G$ \textbf{admits an Iwahori factorisation} (with respect to $B$ and $\Bbar$) if multiplication induces an isomorphism of $L$-analytic manifolds
\[
(\Nbar_1) \times (T_1) \times (N_1) \overset{\sim}\longrightarrow G_1
\]
where $\Nbar_1 = \Nbar \cap G_1$, $T_1 = T \cap G_1$ and $N_1 = N \cap G_1$.
\end{definition}

The canonical example of an open compact subgroup of $G$ with an Iwahori factorisation is the Iwahori subgroup contained in a given special good maximal compact subgroup of $G$, and of type a given Borel subgroup. These are far from the only examples -- indeed Proposition~4.1.6 of \cite{emertonjacquet1} proves that we can find arbitrarily small such subgroups. Let us fix an open compact subgroup $G_1 \subseteq G$ which admits an Iwahori factorisation.

\begin{definition}
Let $\nu: T_1 \to K^{\times}$ be a locally analytic character. We extend it to a character $\nu: \Bbar_1 \to K^{\times}$ using $\Bbar \to \Bbar / \Nbar \cong T$. The \textbf{locally analytic principal series} associated to $G_1$ and $\nu$ is $\Ind_{\Bbar_1}^{G_1} (\nu)$.
\end{definition}

This has an action of $G_1$ by right translation. Since $(\Bbar_1) \backslash G_1 \cong N_1$ is compact, it follows from 4.1.5 of \cite{feauxdelacroix} that this is a locally analytic representation of $G_1$, and so we can differentiate the $G_1$-action to get an action of $\env(\g)$.

\begin{lemma} \label{admissible}
The locally analytic principal series $\Ind_{\Bbar_1}^{G_1} (\nu)$ is an admissible representation of $G_1$.
\end{lemma}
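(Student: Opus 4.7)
The plan is to verify admissibility directly using the Iwahori factorisation of $G_1$. The first step is to note that the decomposition $\Nbar_1 \times T_1 \times N_1 \overset{\sim}{\to} G_1$ induces an isomorphism of compact $L$-analytic manifolds $N_1 \overset{\sim}{\to} \Bbar_1 \backslash G_1$, sending $n \mapsto \Bbar_1 n$. Restricting to $N_1$ along this inclusion gives a topological isomorphism
\[
\Ind_{\Bbar_1}^{G_1}(\nu) \overset{\sim}{\longrightarrow} \mathcal{C}^{\la}(N_1, K_{\nu|T_1}),
\]
so $\Ind_{\Bbar_1}^{G_1}(\nu)$ is automatically of compact type, being the space of locally analytic functions on a compact $L$-analytic manifold.

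Next I would compute the strong dual as a $D(G_1, K)$-module. For a compact $L$-analytic group the distribution algebra $D(G_1, K)$ is a Fr\'echet-Stein algebra by Theorem~5.1 of \cite{schnteit5}, and admissibility of a representation $U$ of compact type of $G_1$ is by definition coadmissibility of $U'_b$ over $D(G_1, K)$. Under the identification above, $\Ind_{\Bbar_1}^{G_1}(\nu)'_b \cong D(N_1, K)$ as a topological $K$-vector space. The Iwahori factorisation on the group side lifts to a topological factorisation $D(G_1, K) \cong D(\Nbar_1, K) \, \widehat{\otimes}_K \, D(T_1, K) \, \widehat{\otimes}_K \, D(N_1, K)$ of the distribution algebra, which in turn exhibits $D(G_1, K)$ as a right-free $D(\Bbar_1, K)$-module on the cyclic generator corresponding to $1 \in D(N_1, K)$. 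This yields a canonical isomorphism of left $D(G_1, K)$-modules
\[
\Ind_{\Bbar_1}^{G_1}(\nu)'_b \cong D(G_1, K) \otimes_{D(\Bbar_1, K)} (K_{\nu})',
\]
and the right-hand side is cyclic, hence a fortiori finitely generated, over $D(G_1, K)$.

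Finally, any finitely generated module over the Fr\'echet-Stein algebra $D(G_1, K)$ is coadmissible (Corollary~3.4v of \cite{schnteit5}), so $\Ind_{\Bbar_1}^{G_1}(\nu)$ is admissible as claimed. The main obstacle is verifying the factorisation of the distribution algebra corresponding to the Iwahori factorisation, and identifying the strong dual with the indicated cyclic module; the rest is formal once Schneider-Teitelbaum's machinery is in place. Alternatively, one could invoke a result of Emerton from \cite{emertonjacquet2} (where induction of admissible representations from closed subgroups of compact $L$-analytic groups is shown to preserve admissibility) and apply it to the one-dimensional representation $K_\nu$ of $\Bbar_1$.
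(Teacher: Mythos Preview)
Your approach is valid but takes a substantially different route from the paper's. The paper does not compute the dual at all: it simply observes that extension by zero identifies $\Ind_{\Bbar_1}^{G_1}(\nu)$ with the closed $G_1$-invariant subspace $\Ind_{\Bbar}^{G}(\nu)(N_1)$ of the full principal series $\Ind_{\Bbar}^{G}(\nu)$, whose admissibility (as a $G$-representation, hence as a $G_1$-representation) is established at the beginning of \S6 of \cite{schnteit5}; one then invokes Proposition~6.4.iii of \cite{schnteit4}, which says that closed invariant subspaces of admissible representations are admissible. This is two citations and no computation.

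Your direct route via the Iwahori factorisation of the distribution algebra is the one taken by Orlik--Strauch and others, and has the merit of being more self-contained; it also makes explicit the $D(G_1,K)$-module structure of the dual, which is useful elsewhere. Two small corrections: the Fr\'echet--Stein structure of $D(G_1,K)$ (your Theorem~5.1) and the coadmissibility criterion (your Corollary~3.4) are both in \cite{schnteit4}, not \cite{schnteit5}; and Corollary~3.4 yields coadmissibility for finitely \emph{presented} modules, not merely finitely generated or cyclic ones. In practice one checks coadmissibility of $D(G_1,K)\otimes_{D(\Bbar_1,K)}(K_\nu)'$ directly at each Banach level $D_r$ using the factorisation, rather than appealing to cyclicity alone. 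Your alternative suggestion of invoking Emerton's result that induction from a closed subgroup preserves admissibility is the closest in spirit to what the paper actually does.
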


\begin{proof}
Proposition~6.4.iii of \cite{schnteit4} says that a closed $G_1$-invariant subspace of an admissible $G_1$-representation is an admissible $G_1$-representation. Since $\Ind_{\Bbar_1}^{G_1} (\nu)$ is isomorphic as a $G_1$-representation to a closed, $G_1$-invariant subspace of $\Ind_{\Bbar}^{G} (\nu)$, it is sufficient to show that $\Ind_{\Bbar}^{G} (\nu)$ is an admissible $G_1$-representation. By the definition of admissibility for locally analytic representations, this is equivalent to it being an admissible $G$-representation, which is shown at the beginning of \S6 of \cite{schnteit5}.
\end{proof}

In fact $\Ind_{\Bbar_1}^{G_1} (\nu)$ has an action of a monoid containing $G_1$. We define $T^{-} = \{ t \in T: t^{-1} N_1 t \subseteq N_1 \}$, which is a submonoid of $T$. We define $M$ to be $G_1 T^{-} G_1$, a submonoid of $G$ containing $G_1$. Then $\Bbar N_1 M \subseteq \Bbar N_1$, so the action of $M$ on $\Ind_{\Bbar}^{G} (\nu)$ preserves $\Ind_{\Bbar}^{G} (\nu)(N_1)$. We can identify $\Ind_{\Bbar_1}^{G_1} (\nu)$ with $\Ind_{\Bbar}^{G} (\nu) (N_1)$ using extension by $0$, which gives us an action of $M$ on $\Ind_{\Bbar_1}^{G_1}(\nu)$. We can also using the $\env(\g)$-equivariant isomorphism $\Ind_{\Bbar}^{G} (\nu) (N) \cong \mathcal{C}^{\la}_{\cs} (N, K_{\nu})$ to identify $\Ind_{\Bbar}^{G} (\nu)(N_1)$ with $\mathcal{C}^{\la}_{\cs} (N, K_{\nu})(N_1)$ and so transfer the $M$-action to $\mathcal{C}^{\la}_{\cs} (N, K_{\nu})(N_1)$.

\begin{lemma} \label{Mequivariant}
Given a non-zero morphism $\psi: \verma{\bbar}{\mu} \to \verma{\bbar}{\lambda}$, let $\phi: \mathcal{C}^{\la}_{\cs} (N, K_{\lambda})(N_1) \to \mathcal{C}^{\la}_{\cs} (N, K_{\mu})(N_1)$ denote the function obtained by restricting $\psi^{\la}$. Then $\phi$ is $M$-equivariant.
\end{lemma}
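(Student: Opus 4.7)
The plan is to reduce the claim to the elementary fact that the left regular $\env(\g)$-action and the right regular $G$-action on $\mathcal{C}^{\la}(G, K)$ commute, using the concrete realisation $\Phi_{\mu} \circ \psi^{\la} = (u_{\psi})_L \circ \Phi_{\lambda}$ from Theorem~\ref{psiextends}. First I would record that $\phi$ is well-defined: if $f \in \mathcal{C}^{\la}_{\cs}(N, K_{\lambda})(N_1)$, then by Lemma~\ref{psiproperties}.\ref{parttwo} we have $\supp \psi^{\la}(f) \subseteq \supp f \subseteq N_1$, whence $\psi^{\la}(f) \in \mathcal{C}^{\la}_{\cs}(N, K_{\mu})(N_1)$.

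Next, the $M$-action on $\mathcal{C}^{\la}_{\cs}(N, K_{\nu})(N_1)$ is, by its definition given just before the statement of the lemma, nothing other than the right translation $M$-action on $\Ind_{\Bbar}^{G}(\nu)(N_1) \subseteq \mathcal{C}^{\la}(G, K)$ transported along the isomorphism $\Phi_{\nu}$. Thus $\Phi_{\nu}$ is tautologically $M$-equivariant on the $N_1$-supported subspaces. Since $(u_{\psi})_L$ is obtained by differentiating the left regular $G$-action, it commutes with right translation by every element of $G$, and in particular by every $m \in M$. Combining these observations, for $f \in \mathcal{C}^{\la}_{\cs}(N, K_{\lambda})(N_1)$ and $m \in M$ I would compute
\[
\Phi_{\mu}(\phi(mf)) = (u_{\psi})_L \Phi_{\lambda}(mf) = (u_{\psi})_L (m \Phi_{\lambda}(f)) = m (u_{\psi})_L \Phi_{\lambda}(f) = m \Phi_{\mu}(\phi(f)) = \Phi_{\mu}(m \phi(f))
\]
and conclude by the injectivity of $\Phi_{\mu}$ that $\phi(mf) = m \phi(f)$.

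The only nonroutine aspect of the argument is conceptual: one has to keep in mind that the $M$-action on $\mathcal{C}^{\la}_{\cs}(N, K_{\nu})(N_1)$ admits no description purely in terms of the $N$-coordinate (it is not merely translation by $N$, since $M$ need not preserve $N$), and is instead defined indirectly through $\Phi_{\nu}$. Once this is unwound, no analytic input is needed: both the commutation of the left and right regular actions of $G$ and the fact that differential operators commute with right translation are immediate.
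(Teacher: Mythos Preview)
Your proof is correct and follows essentially the same approach as the paper: the paper's proof also invokes Lemma~\ref{psiproperties}.\ref{parttwo} for well-definedness, transports $\phi$ through the $M$-equivariant isomorphism $\mathcal{C}^{\la}_{\cs}(N,K_{\nu})(N_1)\cong \Ind_{\Bbar}^{G}(\nu)(N_1)$ to identify it with $(u_{\psi})_L$, and then observes that the L action of $\g$ commutes with the right regular action of $M$. Your version is simply a more explicit unpacking of this via $\Phi_{\nu}$ and the identity $\Phi_{\mu}\circ\psi^{\la}=(u_{\psi})_L\circ\Phi_{\lambda}$ from Theorem~\ref{psiextends}.
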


\begin{proof}
That $\phi$ is well-defined follows from Lemma~\ref{psiproperties}.\ref{parttwo}. Using the $M$-equivariant isomorphism $\mathcal{C}^{\la}_{\cs} (N, K_{\nu})(N_1) \to \Ind_{\Bbar}^{G} (\nu)(N_1)$ we can turn $\phi$ into a map $\Ind_{\Bbar}^{G} (\lambda)(N_1) \to \Ind_{\Bbar}^{G} (\mu)(N_1)$. This map is precisely $(u_{\psi})_L$, and the L action of $\g$ commutes with the right regular action of $M$.
\end{proof}

We define the smooth induction of the trivial character
\[
\smInd_{\Bbar}^{G} (\mathbf{1}) = \{ f \in \mathcal{C}^{\sm}(G, K): f(\overline{b} g) = f(g) \text{ for all } \overline{b} \in \Bbar, g \in G \}
\]
and we have $\mathcal{C}^{\sm}(N_1, K) \cong \smInd_{\Bbar_1}^{G_1} (\mathbf{1}) \cong \smInd_{\Bbar}^{G} (\mathbf{1})(N_1)$ as $\env(\g)$-modules. The same argument as for $\Ind_{\Bbar_1}^{G_1} (\nu)$ gives us an action of $M$ on all of these spaces.

\begin{proposition} \label{mainthm1}
When we restrict \eqref{laES} to functions with support in $N_1$ we get an exact sequence of $M$-representations
\begin{multline*}
0 \to V \tensor \mathcal{C}^{\sm}_{\cs} (N, K)(N_1) \xrightarrow{\delta_{-1}} \mathcal{C}^{\la}_{\cs} (N, K_{\lambda})(N_1) \xrightarrow{\delta_{0}} \bigoplus_{w \in W^{(1)}} \mathcal{C}^{\la}_{\cs} (N, K_{w \cdot \lambda})(N_1) \\
\cdots \xrightarrow{\delta_{i - 1}} \bigoplus_{w \in W^{(i)}} \mathcal{C}^{\la}_{\cs} (N, K_{w \cdot \lambda})(N_1) \xrightarrow{\delta_{i}} \cdots \xrightarrow{\delta_{r - 1}} \mathcal{C}^{\la}_{\cs} (N, K_{w_0 \cdot \lambda})(N_1) \to 0\end{multline*}
\end{proposition}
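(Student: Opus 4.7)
The plan is to verify three claims about the restricted sequence: well-definedness of each differential, $M$-equivariance of the restricted maps, and exactness.

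Well-definedness and $M$-equivariance are largely handled by the preceding lemmas. For $i \geq 0$, the differential $d_i$ is a finite sum of maps $\theta^{\la}_{\alpha, w \cdot \lambda}$, which preserve $N_1$-support by Lemma~\ref{psiproperties}.\ref{parttwo} and whose restrictions are $M$-equivariant by Lemma~\ref{Mequivariant}. For $d_{-1}$, the description $v \otimes \mathbf{1}_\Omega \mapsto \phi(v)\mathbf{1}_\Omega$ with $\phi: V \to \mathcal{C}^{\pol}(N, K_\lambda)$ the $\env(\g)$-equivariant injection from \eqref{polES} gives support preservation immediately. For $M$-equivariance of its restriction to $N_1$-supported functions, one transports $d_{-1}$ across the $M$-equivariant isomorphism $\mathcal{C}^{\la}_{\cs}(N, K_\lambda)(N_1) \cong \Ind_{\Bbar_1}^{G_1}(\lambda)$, where it becomes the natural composite $V \tensor_K \smInd_{\Bbar_1}^{G_1}(\mathbf{1}) \to \Ind_{\Bbar_1}^{G_1}(V) \to \Ind_{\Bbar_1}^{G_1}(\lambda)$ built from Frobenius reciprocity and the $\Bbar_1$-equivariant projection of $V$ onto its $\lambda$-eigenline; since this construction uses only the $G$-structure on $V$ and the right translation action on induced spaces, the restriction to $N_1$-support is automatically $M$-equivariant.

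For exactness, the positions $-1$ and $0$ follow from Theorem~\ref{laESthreetermthm} together with a cutoff argument: if $f \in \ker d_0$ is $N_1$-supported, three-term exactness of the unrestricted sequence yields $\xi \in V \tensor_K \mathcal{C}^{\sm}_{\cs}(N, K)$ with $d_{-1}(\xi) = f$, and one may replace $\xi$ by its restriction on the $\mathcal{C}^{\sm}_{\cs}$-factor to $N_1$ without changing its image under $d_{-1}$, since $f$ already has $N_1$-support.

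The hard part is exactness at positions $i \geq 1$, where only the polynomial version \eqref{lpES} has been shown exact. The strategy parallels the proof of Lemma~\ref{anESlemma}: given $f \in \ker d_i$ supported in $N_1$, partition $N_1$ into finitely many disjoint charts on which $f$ is analytic, decompose $f$ into weight components chart-by-chart via Lemma~\ref{sum} and Corollary~\ref{cortosum}, and use the weight-by-weight exactness of \eqref{polES} (a direct consequence of the BGG resolution of $V^{*}$) to produce a polynomial preimage on each weight space. The main obstacle is reassembly: the BGG differentials amount to iterated partial derivatives, whose inverses involve small $p$-adic denominators, so the weight-summed polynomial preimage need not converge to an analytic function on the original chart. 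This obstacle is overcome by refining the partition of the compact set $N_1$ until each piece is small enough that the inverse BGG operators are bounded; the weight sums then converge analytically on each piece, and gluing yields the required locally analytic preimage with support in $N_1$.
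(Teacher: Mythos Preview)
Your treatment of well-definedness, $M$-equivariance, and exactness at positions $-1$ and $0$ matches the paper's. The real divergence is at positions $i \geq 1$.

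The paper argues quite differently there. Instead of constructing a convergent preimage directly, it observes that $\mathcal{C}^{\la}_{\cs}(N, K_{w\cdot\lambda})(N_1) \cong \Ind_{\Bbar_1}^{G_1}(w\cdot\lambda)$ is an admissible $G_1$-representation (Lemma~\ref{admissible}), and then invokes the Schneider--Teitelbaum result that a continuous $G_1$-equivariant map between admissible representations has closed image. Thus $\im\delta_{i-1}$ is closed. Given $f \in \ker\delta_i$ analytic on a chart $X \subseteq N_1$, the paper decomposes it into weight components and forms the finite partial sums; each partial sum is locally polynomial, hence lies in $\im\delta_{i-1}$ by exactness of \eqref{lpES} (together with Lemma~\ref{psiproperties}(\ref{partthree}) to control supports), and the limit $f$ then lies in $\im\delta_{i-1}$ by closedness. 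No explicit preimage is ever produced.

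Your approach, by contrast, has a genuine gap. When you solve weight-by-weight in the polynomial complex at an intermediate position $1 \leq i \leq r-1$, the preimage of each $f_\nu$ is determined only modulo $(\ker d_{i-1})_\nu$, which is typically nonzero. You give no mechanism for choosing these preimages coherently across the infinitely many weights so that their sum converges. The phrase ``the inverse BGG operators are bounded'' glosses over this: since the differentials have nontrivial kernels there are no inverses, only sections, and exhibiting sections with uniform norm bounds across all weight spaces is exactly the hard step you have not carried out. In rank one the section is essentially unique (antidifferentiate with zero constants) and your shrinking argument can be made to work; but in higher rank the complex has several summands at each degree and it is not at all clear how to write down the required bounded sections. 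The paper's admissibility argument sidesteps all of this by replacing the construction of a preimage with the topological statement $\overline{\im\delta_{i-1}} = \im\delta_{i-1}$.
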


\begin{proof}
For $i \geq 0$, $\delta_i$ is well-defined by Lemma~\ref{psiproperties}.\ref{parttwo} and $M$-equivariant by Lemma~\ref{Mequivariant}. That $\delta_{-1}$ is well-defined and $M$-equivariant follows immediately from the definition of $d_{-1}$. (We use $d_i$ to refer to the maps in \eqref{laES}.)

By Theorem~\ref{laESthreetermthm} it is immediate that $\delta_{-1}$ is an injection and $\delta_{0} \circ \delta_{-1} = 0$. It also implies that $\ker \delta_{0} \subseteq \im \delta_{-1}$, by the following argument. Suppose that $f \in \mathcal{C}^{\la}_{\cs} (N, K_{\lambda})(N_1)$ is in $\ker \delta_{0}$. Then we know that it has a preimage $\sum v_i \tensor \mathbf{1}_{X_i} \in V \tensor_{K} \mathcal{C}^{\sm}_{\cs} (N, K)$ where the $X_i$ are disjoint charts of $N$. If we let $\phi$ denote the injection $V \to \mathcal{C}^{\pol} (N, K_{\lambda})$ from \eqref{polES} then $d_{-1} (\sum v_i \tensor \mathbf{1}_{X_i}) = \sum \phi(v_i) \mathbf{1}_{X_i}$, whence it follows that $X_i \subseteq N_1$ for all $i$ and $f \in \im \delta_{-1}$.

We now prove exactness at $\bigoplus_{w \in W^{(i)}} \mathcal{C}^{\la}_{\cs} (N, K_{w \cdot \lambda})(N_1)$ for $i \geq 1$. Since \eqref{lpES} is exact and $\mathcal{C}^{\la}_{\cs} (N, K_{\mu})$ is dense in $\mathcal{C}^{\la}_{\cs} (N, K_{\mu})$ we know that $d_{i} \circ d_{i - 1} = 0$, and hence $\delta_{i} \circ \delta_{i - 1} = 0$.  Thus, it suffices to prove that $\ker \delta_{i} \subseteq \im \delta_{i - 1}$.

Fix $(f_w)_{w \in W^{i}} \in \ker \delta_{i}$. Let us first suppose that we have a chart $X \subseteq N_1$ such that each $f_w$ is analytic on $X$ and $0$ outside it. Since $Z_{\lambda} \supseteq Z_{w \cdot \lambda}$ for all $w \in W$, by Lemma~\ref{sum}, we can write each $f_w$ uniquely as $\sum_{\nu \in Z_{\lambda}} f_{w, \nu}$ where $f_{w, \nu}|_{X} \in \mathcal{C}^{\pol} (X, w \cdot \lambda)_{\nu}$ and $f_{w, \nu}$ is $0$ outside $X$. Using the fact that $\delta_i$ is $\env(\g)$-equivariant, and applying Lemma~\ref{sum} with $\mu = w \cdot \lambda$ for each $w \in W^{(i + 1)}$, we see that $(f_{w, \nu})_{w \in W^{(i)}} \in \ker \delta_i$ for each $\nu \in Z_{\lambda}$. Since $Z_{\lambda}$ is countable let us choose an increasing sequence of finite subsets $A_n \subseteq Z_{\lambda}$ such that $\bigcup_{n=1}^{\infty} A_n = Z_{\lambda}$ and set $f_{w, n} = \sum_{\nu \in A_n} f_{w, \nu}$. Then $(f_{w, n})_{w \in W^{(i)}}$ tends to $(f_w)_{w \in W^{(i)}}$ as $n \to \infty$. By the exactness of \eqref{lpES}, each $(f_{w, n})_{w \in W^{(i)}}$ is in $\im d_{i - 1}$, and hence in $\im \delta_{i - 1}$ by Lemma~\ref{psiproperties}.\ref{partthree}. We want to show that their limit must therefore also be in $\im \delta_{i - 1}$. It is sufficient to demonstrate that $\im \delta_{i - 1}$ is closed.

As explained in Lemma~\ref{admissible}, $\mathcal{C}^{\la}_{\cs} (N, K_{\nu})(N_1) \cong  \Ind_{\Bbar_1}^{G_1} (\nu)$ is an admissible $G_1$-representation, and hence $\delta_{i - 1}$ is a $G_1$-equivariant, $K$-linear map between two admissible $G_1$-representations. By Proposition~6.4.ii in \cite{schnteit4}, the image of $\delta_{i - 1}$ is closed.

This deals with the case that there is a chart $X \subseteq N_1$ such that each $f_w$ is analytic on $X$ and $0$ outside of it. For a general $(f_w)_{w \in W^{(i)}} \in \ker \delta_i$ we can find a finite set of disjoint charts $\{ X_j \}$ which cover $N_1$ and such that for all $w \in W^{(i)}$ and all $j$, $f_w$ is analytic on $X_j$. We know that $((f_w)_{|X_j})_{w \in W^{(i)}}$ is still in $\ker \delta_{i}$, so by the above arguments we can find a preimage for it, and adding these all together we get a preimage for $(f_w)_{w \in W^{(i)}}$.
\end{proof}

\begin{corollary} \label{laEScor}
The sequence \eqref{laES} is exact.
\end{corollary}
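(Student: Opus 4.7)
The plan is to deduce the exactness of \eqref{laES} from Proposition~\ref{mainthm1} by passing to a directed union over the family of open compact subgroups $G_1 \subseteq G$ admitting an Iwahori factorisation.

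First I would observe that any $f \in \mathcal{C}^{\la}_{\cs}(N, K_{\nu})$ has compact support, and that $V \tensor_K \mathcal{C}^{\sm}_{\cs}(N, K)$ is, term by term, the directed union of the subspaces $V \tensor_K \mathcal{C}^{\sm}_{\cs}(N, K)(N_1)$ as $N_1$ runs through compact open subgroups of $N$. So all terms of \eqref{laES} are directed unions of the corresponding terms of the sequence in Proposition~\ref{mainthm1} as $N_1$ grows. By Lemma~\ref{psiproperties}.\ref{parttwo} the maps in \eqref{laES} preserve supports, hence restrict compatibly to give exactly the maps in Proposition~\ref{mainthm1} at each stage. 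Since directed colimits of exact sequences of abelian groups are exact, it therefore suffices to show that the set of compact open subgroups of $N$ of the form $N_1 = N \cap G_1$, for $G_1 \subseteq G$ admitting an Iwahori factorisation, is cofinal in the set of all compact open subgroups of $N$.

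To see this cofinality, I would fix an Iwahori subgroup $I \subseteq G$ with Iwahori factorisation $I = \Nbar_I T_I N_I$. Given an arbitrary compact open subgroup $N_1' \subseteq N$, I would choose $t \in T$ in the strictly dominant cone (so that conjugation by $t$ expands $N$ and contracts $\Nbar$) with $t N_I t^{-1} \supseteq N_1'$; this is possible because $\bigcup_{t} t N_I t^{-1} = N$. Then $G_1 := t I t^{-1}$ is open compact, and since $T$ is commutative its Iwahori factorisation is $G_1 = (t \Nbar_I t^{-1}) \cdot T_I \cdot (t N_I t^{-1})$, with $N$-part $t N_I t^{-1} \supseteq N_1'$.

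Combining the two steps: any $f$ in any term of \eqref{laES} lies in the corresponding term of the Proposition~\ref{mainthm1} sequence for some suitable $G_1$, and the maps agree under inclusion, so exactness at each position in \eqref{laES} follows from exactness at the corresponding position in each Proposition~\ref{mainthm1} sequence. The one step that deserves care, rather than difficulty, is the cofinality claim: it is standard but must invoke the ``expanding'' conjugation by a dominant element of $T$ rather than the contracting $T^{-}$ that features elsewhere in the paper.
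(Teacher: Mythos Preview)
Your argument is correct but follows a different route from the paper. The paper fixes a single $G_1$ with an Iwahori factorisation and, for $(f_w)_{w\in W^{(i)}}\in\ker d_i$, covers the union of the supports by finitely many right cosets $N_1 n$; it then uses the $N$-equivariance of the maps (Lemma~\ref{psiproperties}.\ref{partfive}) to translate each piece into $N_1$, applies Proposition~\ref{mainthm1} there, and translates back. Your approach instead varies $G_1$, taking $T$-conjugates $tIt^{-1}$ of a fixed Iwahori so that the $N$-part $tN_It^{-1}$ swallows any prescribed compact set, and then passes to the filtered colimit.

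Both arguments reduce to Proposition~\ref{mainthm1}; the paper's version is slightly more self-contained because the $N$-equivariance in Lemma~\ref{psiproperties}.\ref{partfive} is already established, whereas your cofinality step imports the expansion property $\bigcup_k t^k N_I t^{-k}=N$ for a strictly dominant $t$, which is standard but not proved in the paper (and, as you note, requires working with the opposite direction to the monoid $T^{-}$ used elsewhere). On the other hand, your colimit argument handles all positions uniformly, while the paper treats the first three terms separately via Theorem~\ref{laESthreetermthm}.
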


\begin{proof}
Theorem~\ref{laESthreetermthm} deals with exactness at $V \tensor_{K} \mathcal{C}^{\sm}_{\cs} (N, K)$ and $\mathcal{C}^{\la}_{\cs} (N, K_{\lambda})$. Let $i \geq 1$. We explained in Proposition~\ref{mainthm1} that $d_i \circ d_{i - 1} = 0$, so it only remains to show that $\ker d_i \subseteq \im d_{i - 1}$.

Consider $(f_w)_{w \in W^{(i)}}$ in $\ker d_i$ such that for some $n \in N$, $\supp f_w \subseteq N_1 n$ for all $w \in W^{(i)}$. Then for all $w \in W^{(i)}$, $\supp n f_w \subseteq N_1$, and by Lemma~\ref{psiproperties}.\ref{partfive} we have $d_i ((n f_w)_{w \in W^{(i)}}) = n d_i ((f_w)_{w \in W^{(i)}}) = 0$. We proved in Proposition~\ref{mainthm1} that we therefore have a preimage $(g_w)_{w \in W^{(i - 1)}}$ of $(n f_w)_{w \in W^{(i)}}$. Then $((n^{-1} g_w)_{w \in W^{(i - 1)}}$ is a preimage of $(f_w)_{w \in W^{(i)}}$.

A general $(f_w)_{w \in W^{(i)}} \in \ker d_i$ can be written as a finite sum of such functions, so by linearity we are done.
\end{proof}

\begin{theorem} \label{laindEScor}
We have an exact sequence of $M$-representations
\begin{multline*}
0 \to V \tensor_{K} \smInd_{\Bbar_1}^{G_1} (\mathbf{1}) \to \Ind_{\Bbar_1}^{G_1} (\lambda) \to \bigoplus_{w \in W^{(1)}} \Ind_{\Bbar_1}^{G_1} (w \cdot \lambda) \to \\
\cdots \to \bigoplus_{w \in W^{(i)}} \Ind_{\Bbar_1}^{G_1} (w \cdot \lambda) \to \cdots \to \Ind_{\Bbar_1}^{G_1} (w_0 \cdot \lambda) \to 0.
\end{multline*}
coming from the BGG resolution for $V^{*}$.
\end{theorem}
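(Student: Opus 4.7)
My plan is that this theorem should follow almost directly from Proposition~\ref{mainthm1} by transport of structure along $M$-equivariant isomorphisms, so the work reduces to identifying each term of the sequence from Proposition~\ref{mainthm1} with the corresponding term in the statement.

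First, I would recall that the isomorphism \eqref{suppN} restricts to an isomorphism $\mathcal{C}^{\la}_{\cs}(N, K_\nu)(N_1) \xrightarrow{\sim} \Ind_{\Bbar}^{G}(\nu)(N_1)$, and that extension by zero identifies $\Ind_{\Bbar}^{G}(\nu)(N_1)$ with $\Ind_{\Bbar_1}^{G_1}(\nu)$. As was spelled out in \S\ref{slocan}, all of these identifications are $M$-equivariant once we transfer the $M$-action across them. So for each $w \in W$ we obtain an $M$-equivariant isomorphism
\[
\mathcal{C}^{\la}_{\cs}(N, K_{w \cdot \lambda})(N_1) \xrightarrow{\sim} \Ind_{\Bbar_1}^{G_1}(w \cdot \lambda).
\]
The analogous discussion for the smooth induction of the trivial character gives an $M$-equivariant isomorphism $\mathcal{C}^{\sm}_{\cs}(N, K)(N_1) \xrightarrow{\sim} \smInd_{\Bbar_1}^{G_1}(\mathbf{1})$, and tensoring with the algebraic $M$-representation $V$ yields the first term of the desired sequence.

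Next, I would observe that the maps $\delta_i$ in Proposition~\ref{mainthm1} transport, under these isomorphisms, to well-defined $M$-equivariant maps between the principal series: for $i \geq 0$ the map $\delta_i$ is obtained by restricting $d_i = \bigoplus \theta_{\alpha, w \cdot \lambda}^{\la}$, and each $\theta_{\alpha, w \cdot \lambda}^{\la}$ preserves support by Lemma~\ref{psiproperties}.\ref{parttwo} and is $M$-equivariant on the $N_1$-supported subspace by Lemma~\ref{Mequivariant}; the map $\delta_{-1}$ is $M$-equivariant by inspection of its definition from the BGG resolution and the fact that $V$ is an algebraic $G$-representation.

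Finally, the exactness of the resulting sequence is literally the exactness proven in Proposition~\ref{mainthm1}, since exactness is preserved by any chain of $K$-linear isomorphisms. There is no genuine obstacle left at this stage: all the real work (injectivity of the first map, vanishing of consecutive compositions, and the delicate approximation-plus-closedness argument for exactness at the middle terms) was carried out in Proposition~\ref{mainthm1}; here one merely assembles the translated sequence and cites that result.
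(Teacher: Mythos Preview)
Your proposal is correct and is exactly the paper's approach: the paper's own proof is the single sentence ``This follows immediately from Proposition~\ref{mainthm1},'' and what you have written simply spells out the $M$-equivariant identifications (already established in \S\ref{slocan}) that make this immediate.
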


\begin{proof}
This follows immediately from Proposition~\ref{mainthm1}.
\end{proof}

\section{Locally analytic principal series for $G$} \label{sG}

Using the results of \S\ref{slocan} we now construct an exact series analogous that in Theorem~\ref{laindEScor} for locally analytic principal series for $G$. Let $G_0$ be a special good maximal compact subgroup of $G$. The Iwasawa decomposition says that $G = \Bbar G_0$ (cf. \S3.5 in \cite{Cartier}), which gives us an isomorphism of $G_0$-representations
\[
\Ind_{\Bbar}^{G} (\nu) \cong \Ind_{\Bbar_0}^{G_0} (\nu)
\]
where $\Bbar_0 = \Bbar \cap G_0$.

We may fix representatives of $W$ which are in $G_0$, by 4.2.3 of \cite{BruhatTits}.

Let $G_1 \subseteq G_0$ be the Iwahori subgroup of the same type as $\Bbar$. This has an Iwahori factorisation with respect to $B$ and $\Bbar$ and we have the Bruhat-Iwahori decomposition
\[
G_0 = \bigsqcup_{w \in W} \Bbar_0 w G_1.
\]
It follows that we have an isomorphism of $G_1$-representations
\begin{align*}
\Ind_{\Bbar_0}^{G_0} (\nu) \longrightarrow \bigoplus_{w \in W} \Ind_{\Bbar \cap w G_1 w^{-1}}^{w G_1 w^{-1}} (\nu) && f \longmapsto ((w f)|_{w G_1 w^{-1}})_{w \in W}
\end{align*}
where the action of $G_1$ on $\Ind_{\Bbar \cap w G_1 w^{-1}}^{w G_1 w^{-1}} (\nu)$ is via $G_1 \to w G_1 w^{-1}$, $g \mapsto w g w^{-1}$.

\begin{lemma} \label{iwahorifac}
For any $w \in W$, $w G_1 w^{-1}$ has an Iwahori factorisation
\[
(w G_1 w^{-1} \cap \Nbar) \times (w G_1 w^{-1} \cap T) \times (w G_1 w^{-1} \cap N) \overset{\sim}\longrightarrow w G_1 w^{-1}
\]
with respect to $B$ and $\Bbar$.
\end{lemma}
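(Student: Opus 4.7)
The plan is to deduce the Iwahori factorisation of $wG_1 w^{-1}$ from that of $G_1$ by conjugating each of the root subgroup factors individually and then re-sorting them by sign.

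First, the Iwahori factorisation hypothesis on $G_1$, combined with the root space decomposition of $N$ and $\Nbar$, furnishes for each $\alpha \in \Phi$ a compact open subgroup $U_{\alpha,1} := G_1 \cap U_\alpha$ of the root subgroup $U_\alpha \subseteq G$, together with $T_1 := G_1 \cap T$, such that for any fixed orderings of $\Phi^+$ and $\Phi^-$ multiplication gives
\[
G_1 \;\cong\; \prod_{\alpha \in \Phi^-} U_{\alpha,1} \;\times\; T_1 \;\times\; \prod_{\alpha \in \Phi^+} U_{\alpha,1}.
\]

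Next, since $w$ may be taken to be a representative in $G_0$ normalising $T$, conjugation by $w$ preserves $T$ and sends $U_\alpha$ isomorphically onto $U_{w\alpha}$, so $w U_{\alpha,1} w^{-1}$ is a compact open subgroup of $U_{w\alpha}$ and $w T_1 w^{-1}$ is a compact open subgroup of $T$. Applying this to the factorisation above writes $wG_1 w^{-1}$ as an ordered product of these subgroups indexed by $w\Phi^-$, then $T$, then $w\Phi^+$. The task is to reshuffle this product so that the indexing sets become $\Phi^-$ and $\Phi^+$ respectively, i.e.\ to move any factor indexed by a root in $w\Phi^+ \cap \Phi^-$ across to the left of the $T$-block and any factor indexed by a root in $w\Phi^- \cap \Phi^+$ across to the right.

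Such interchanges are carried out using the Chevalley commutator relations $[U_\beta, U_\gamma] \subseteq \prod_{i,j \geq 1} U_{i\beta + j\gamma}$, together with the fact that both $\Phi^+$ and $\Phi^-$ are closed under taking integral sums that happen to lie in $\Phi$. The main obstacle is verifying that, after each such swap, the commutator correction terms still live in $wG_1 w^{-1}$ and in a compact open subgroup of the appropriate root subgroup; this requires working with the filtration on root groups induced by $G_1$ and tracking how commutator brackets interact with it. Doing this carefully yields compact open subgroups $V_\beta \subseteq U_\beta$ for every $\beta \in \Phi$ such that
\[
wG_1 w^{-1} \;=\; \prod_{\beta \in \Phi^-} V_\beta \;\cdot\; (w T_1 w^{-1}) \;\cdot\; \prod_{\beta \in \Phi^+} V_\beta,
\]
forcing $V_\beta = wG_1 w^{-1} \cap U_\beta$, which is the desired Iwahori factorisation. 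Injectivity of the multiplication map is inherited from the global injection $\Nbar \times T \times N \hookrightarrow G$.
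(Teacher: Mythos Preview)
The paper's own proof is a one-line citation of Lemme~5.4.2 in Matsumoto's Springer Lecture Notes volume~590, so your direct argument is genuinely different in presentation, though it is essentially what underlies that reference.

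Your outline is the right strategy, but two steps are not actually established. First, the refined product decomposition $G_1 \cong \prod_{\alpha \in \Phi^-} U_{\alpha,1} \times T_1 \times \prod_{\alpha \in \Phi^+} U_{\alpha,1}$ does \emph{not} follow from the bare Iwahori factorisation $\Nbar_1 \times T_1 \times N_1 \cong G_1$ together with the root-space decomposition of $\n$; a compact open subgroup $N_1$ of $N$ need not be the product of its intersections with the individual root subgroups. What you need is that $G_1$ is the Iwahori subgroup attached to an alcove in the Bruhat--Tits building, so that it arises from a valuated root datum in the sense of Bruhat--Tits (this is the context of \S9, but it must be invoked). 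Second, and more seriously, the reshuffling step is asserted rather than proved: you correctly identify that ``tracking how commutator brackets interact with the filtration'' is the main obstacle, but then write ``Doing this carefully yields\ldots'' without doing it. The substance of the argument is precisely that the Chevalley commutator constants, combined with the specific filtration values defining the Iwahori, force the correction terms to be absorbed into the appropriate factors; this is a genuine computation (carried out in Bruhat--Tits \S6.4 or Matsumoto \S5) and cannot be elided. Without it there is no argument that the reshuffling terminates in a product of the claimed form, and your final clause ``forcing $V_\beta = wG_1 w^{-1} \cap U_\beta$'' is then circular.
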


\begin{proof}
This follows from Lemme~5.4.2 in \cite{matsumoto}.
\end{proof}

In \S\ref{sdefmaps} we started with a map $\psi: \verma{\bbar}{\mu} \to \verma{\bbar}{\lambda}$ and produced $\psi^{\la}: \mathcal{C}^{\la}_{\cs} (N, K_{\lambda}) \to \mathcal{C}^{\la}_{\cs} (N, K_{\mu})$. This leads to a $G_1$-equivariant map
\[
(u_{\psi})_{L}: \Ind_{\Bbar \cap w G_1 w^{-1}}^{w G_1 w^{-1}} (\lambda) \to \Ind_{\Bbar \cap w G_1 w^{-1}}^{w G_1 w^{-1}} (\mu)
\]

\begin{lemma} \label{psiGequiv}
Using $\Ind_{\Bbar}^{G}(\nu) \cong \Ind_{\Bbar_0}^{G_0} (\nu) \cong \bigoplus_{w \in W} \Ind_{\Bbar \cap w G_1 w^{-1}}^{w G_1 w^{-1}} (\nu)$, the above maps give us a $G_1$-equivariant map $\Ind_{\Bbar}^{G} (\lambda) \to \Ind_{\Bbar}^{G} (\mu)$. It is in fact $G$-equivariant.
\end{lemma}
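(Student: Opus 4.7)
The strategy is to show that the map $\phi: \Ind_{\Bbar}^{G}(\lambda) \to \Ind_{\Bbar}^{G}(\mu)$ built from the decomposition is simply given by $f \mapsto (u_{\psi})_{L} f$, i.e., by applying the left regular action of $u_{\psi} \in \env(\g)$ globally. Since the left and right regular actions of $G$ on $\mathcal{C}^{\la}(G, K)$ commute, any map of this form is automatically $G$-equivariant with respect to the right regular action of $G$, which is exactly the action we are trying to intertwine.

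First I would verify that the prescription $\tilde{\phi}(f) = (u_{\psi})_{L} f$ is well-defined as a map $\Ind_{\Bbar}^{G}(\lambda) \to \Ind_{\Bbar}^{G}(\mu)$. From the Iwasawa/Bruhat--Iwahori decomposition, $\Bbar \backslash G$ is covered by finitely many right-translates $\Bbar N g_{i}^{-1}$ of the big cell. Using a smooth partition of unity on $\Bbar \backslash G$, any $f \in \Ind_{\Bbar}^{G}(\lambda)$ may be written as a finite sum $f = \sum_{i} g_{i} \cdot_{R} \tilde{f}_{i}$ with $\tilde{f}_{i} \in \Ind_{\Bbar}^{G}(\lambda)(N)$. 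Theorem~\ref{psiextends} shows that $(u_{\psi})_{L}$ sends $\Ind_{\Bbar}^{G}(\lambda)(N)$ into $\Ind_{\Bbar}^{G}(\mu)(N)$, and since $(u_{\psi})_{L}$ commutes with right translation we get $(u_{\psi})_{L} f = \sum_{i} g_{i} \cdot_{R} (u_{\psi})_{L} \tilde{f}_{i}$, which lies in $\Ind_{\Bbar}^{G}(\mu)$.

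Next I would check that $\tilde{\phi}$ agrees with $\phi$. Under the isomorphism $\Ind_{\Bbar_{0}}^{G_{0}}(\mu) \xrightarrow{\sim} \bigoplus_{w \in W} \Ind_{\Bbar \cap w G_{1} w^{-1}}^{w G_{1} w^{-1}}(\mu)$, a function $F$ is sent to $((wF)|_{w G_{1} w^{-1}})_{w}$. Taking $F = (u_{\psi})_{L} f$ and using that the right translation action of $w$ commutes with $(u_{\psi})_{L}$, together with the fact that the differential operator $(u_{\psi})_{L}$ is local (so commutes with restriction to open sets), one obtains
\[
(w F)|_{w G_{1} w^{-1}} = (u_{\psi})_{L}\bigl( (w f)|_{w G_{1} w^{-1}} \bigr),
\]
which is precisely the $w$-component of the decomposition-based image of $f$. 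Hence $\tilde{\phi}(f)$ and $\phi(f)$ are both in $\Ind_{\Bbar}^{G}(\mu)$ and have the same image under the decomposition isomorphism, so they coincide.

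Combining these steps gives $\phi = \tilde{\phi} = (u_{\psi})_{L}$, which commutes with right translation by arbitrary $g \in G$, proving $G$-equivariance. The only mildly delicate step is the first one: converting the local result of Theorem~\ref{psiextends} (where the target support is in $N$) into a global statement on $\Ind_{\Bbar}^{G}(\lambda)$. Once one has the finite cover of $\Bbar \backslash G$ by right-translates of $\Bbar N$ in hand, the rest is formal, resting on the commutation between the left and right regular actions of $G$ and on the locality of differential operators.
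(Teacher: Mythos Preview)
Your proposal is correct and follows essentially the same strategy as the paper: both identify the decomposition-built map with the global operator $(u_{\psi})_{L}$ and then invoke the commutation of the left and right regular actions. The only cosmetic difference is in verifying that $(u_{\psi})_{L}$ lands in $\Ind_{\Bbar}^{G}(\mu)$: you use a smooth partition of unity subordinate to a finite cover of $\Bbar\backslash G$ by right-translates of the big cell, whereas the paper checks the $\Bbar$-transformation law directly on each piece of the Bruhat--Iwahori decomposition $G=\bigsqcup_{w}\Bbar w G_{1}$, using Lemma~\ref{iwahorifac} to place $wG_{1}w^{-1}$ inside $\Bbar N$ and then appealing to Theorem~\ref{psiextends} and Lemma~\ref{uLrestrict}.
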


\begin{proof}
We will show that this map is precisely $(u_{\psi})_{L}$, which is $G$-equivariant. Let $f \in \Ind_{\Bbar}^{G} (\lambda)$. This corresponds to
\[
((wf)|_{w G_1 w^{-1}})_{w \in W} \in \bigoplus_{w \in W} \Ind_{\Bbar \cap w G_1 w^{-1}}^{w G_1 w^{-1}} (\lambda)
\]
which is in turn sent to
\[((u_{\psi})_{L} (wf)|_{w G_1 w^{-1}})_{w \in W} \in \bigoplus_{w \in W} \Ind_{\Bbar \cap w G_1 w^{-1}}^{w G_1 w^{-1}} (\mu).
\]
There is a unique $f' \in \Ind_{\Bbar}^{G} (\mu)$ such that $(w f')|_{w G_1 w^{-1}} = (u_{\psi})_{L} (wf)|_{w G_1 w^{-1}}$ for all $w \in W$. Since
\[
(u_{\psi})_{L} (wf)|_{w G_1 w^{-1}} = ((u_{\psi})_{L} wf)|_{w G_1 w^{-1}} = (w (u_{\psi})_{L} f)|_{w G_1 w^{-1}}
\]
the obvious candidate for $f'$ is $(u_{\psi})_{L} f$. We must show that $(u_{\psi})_{L} f \in \Ind_{\Bbar}^{G} (\mu)$. Since $G = \Bbar G_0 = \Bbar \left( \bigsqcup_{w \in W} \Bbar_0 w G_1 \right)  = \bigsqcup_{w \in W} \Bbar w G_1$, it suffices to prove that $(u_{\psi})_{L} f(\overline{b} w g) = \mu (\overline{b}) (u_{\psi})_{L} f(w g)$ for all $\overline{b} \in \Bbar$, $w \in W$ and $g \in G_{1}$.

Fix $w \in W$. We have $w f \in \Ind_{\Bbar}^{G} (\lambda)$ and hence $(wf)_{| N} \in \Ind_{\Bbar}^{G} (\lambda) (N)$. In the proof of Theorem~\ref{psiextends} we showed that $(u_{\psi})_{L} \Ind_{\Bbar}^{G} (\lambda) (N) \subseteq \Ind_{\Bbar}^{G} (\mu)(N)$, so $(u_{\psi})_{L} ((w f)_{| N}) \in \Ind_{\Bbar}^{G} (\mu)(N)$. Let $\overline{b} \in \Bbar$ and $g \in G_1$. By Lemma~\ref{uLrestrict}, $(u_{\psi})_{L} ((w f)_{| N}) = ((u_{\psi})_{L} w f)_{| N})$, and $w g w^{-1} \in \Bbar N$ by Lemma~\ref{iwahorifac}, so
\begin{align*}
((u_{\psi})_{L} w f)) (\overline{b} w g w^{-1}) & = ((u_{\psi})_{L} w f)_{| N}) (\overline{b} w g w^{-1}) \\
& = \mu(\overline{b}) ((u_{\psi})_{L} w f)_{| N}) (w g w^{-1}) \\
& = \mu(\overline{b}) ((u_{\psi})_{L} w f)) (w g w^{-1})
\end{align*}
whence it immediately follows that $(u_{\psi})_{L} f(\overline{b} w g) = \mu (\overline{b}) (u_{\psi})_{L} f(w g)$.
\end{proof}

We can now prove an analogue of Theorem~\ref{laindEScor} for locally analytic principal series for all of $G$. This has been done independently by different methods in \S4.9 of \cite{OrlikStrauch2}.

\begin{theorem}
We have an exact sequence of $G$-representations
\begin{multline*}
0 \to V \tensor \smInd_{\Bbar}^{G} (\mathbf{1}) \to \Ind_{\Bbar}^{G} (\lambda) \to \bigoplus_{w \in W^{(1)}} \Ind_{\Bbar}^{G} (w \cdot \lambda) \to \\
\cdots \to \bigoplus_{w \in W^{(i)}} \Ind_{\Bbar}^{G} (w \cdot \lambda) \to \cdots \to \Ind_{\Bbar}^{G} (w_0 \cdot \lambda) \to 0
\end{multline*}
coming from the BGG resolution for $V^{*}$.
\end{theorem}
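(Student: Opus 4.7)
The plan is to deduce the theorem from Theorem~\ref{laindEScor} by decomposing $\Ind_{\Bbar}^{G}(\nu)$ along the Bruhat-Iwahori decomposition. Combining the Iwasawa decomposition $G = \Bbar G_0$ with $G_0 = \bigsqcup_{w \in W} \Bbar_0 w G_1$, where $G_1$ is the Iwahori subgroup of $G_0$ of type $\Bbar$, we obtain for any locally analytic character $\nu$ a canonical isomorphism
\[
\Ind_{\Bbar}^{G}(\nu) \cong \bigoplus_{w \in W} \Ind_{\Bbar \cap wG_1w^{-1}}^{wG_1w^{-1}}(\nu),
\]
exactly as set up just before Lemma~\ref{iwahorifac}. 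By Lemma~\ref{iwahorifac}, each $wG_1w^{-1}$ admits an Iwahori factorisation with respect to $B$ and $\Bbar$, so Theorem~\ref{laindEScor} applies to each and yields an exact sequence of $wG_1w^{-1}$-representations of the required shape, with $G_1$ replaced by $wG_1w^{-1}$ throughout.

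Taking the direct sum of these sequences over $w \in W$ and applying the isomorphism above termwise produces a sequence of $K$-vector spaces of the desired form; it is exact because direct sums preserve exactness. For the maps between induced representations in the sequence, which by construction are the restrictions of operators $(u_\psi)_L$ arising from morphisms $\psi: \verma{\bbar}{\mu} \to \verma{\bbar}{\lambda}$ of Verma modules, $G$-equivariance (not merely $G_1$-equivariance) follows directly from Lemma~\ref{psiGequiv}.

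The one piece not covered by Lemma~\ref{psiGequiv} is $G$-equivariance of the first map $V \tensor_K \smInd_{\Bbar}^{G}(\mathbf{1}) \to \Ind_{\Bbar}^{G}(\lambda)$. The plan here is to identify it globally as $v \tensor f \mapsto (g \mapsto \phi_v(g) f(g))$, where $\phi_v \in \Ind_{\Bbar}^{G}(\lambda)$ is the algebraic function associated to $v$ under the $G$-equivariant Borel-Weil type embedding of $V$ into $\Ind_{\Bbar}^{G}(\lambda)$ as the subspace of algebraic functions transforming by $\lambda$. Both this embedding and pointwise multiplication by elements of $\smInd_{\Bbar}^{G}(\mathbf{1})$ are $G$-equivariant, so their combination is as well. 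The main obstacle is verifying this global identification: concretely, one must check that the piece-by-piece first maps coming from the $wG_1w^{-1}$-sequences agree, under the isomorphism displayed above, with the natural multiplication map. This reduces to the fact that the image of $V$ in $\mathcal{C}^{\pol}(N, K_{\lambda})$ constructed in \S\ref{sBGG} coincides with the restriction to $N$ of the algebraic functions on $G$ realising the embedding $V \hookrightarrow \Ind_{\Bbar}^{G}(\lambda)$.
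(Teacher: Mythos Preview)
Your proposal is correct and follows essentially the same approach as the paper: decompose $\Ind_{\Bbar}^{G}(\nu)$ via the Bruhat--Iwahori decomposition into a direct sum over $w\in W$ of inductions to $wG_1w^{-1}$, apply Theorem~\ref{laindEScor} (with Lemma~\ref{iwahorifac}) to each summand, take the direct sum for exactness, use Lemma~\ref{psiGequiv} for $G$-equivariance of the maps $d_i$ with $i\geq 0$, and identify $d_{-1}$ globally as $v\tensor f\mapsto \phi(v)f$ for the $G$-equivariant algebraic embedding $\phi:V\hookrightarrow \Ind_{\Bbar}^{G}(\lambda)$. The paper carries out explicitly the verification you flag as the ``main obstacle'' by tracing $v\tensor f$ through the isomorphism $\Ind_{\Bbar}^{G}(\nu)\cong\bigoplus_{w}\Ind_{\Bbar\cap wG_1w^{-1}}^{wG_1w^{-1}}(\nu)$ and using the $G$-equivariance of $\phi$ to rewrite $(\phi(wv)(wf)|_{wG_1w^{-1}})_w$ as $(w(\phi(v)f)|_{wG_1w^{-1}})_w$, which is exactly the image of $\phi(v)f$; your sketch of this step is accurate.
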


\begin{proof}
For each $w \in W$ we have an exact sequence of $w G_1 w^{-1}$-representations
\begin{multline*}
0 \to V \tensor \smInd_{\Bbar \cap w G_1 w^{-1}}^{w G_1 w^{-1}} (\mathbf{1}) \to \Ind_{\Bbar \cap w G_1 w^{-1}}^{w G_1 w^{-1}} (\lambda) \to \bigoplus_{w \in W^{(1)}} \Ind_{\Bbar \cap w G_1 w^{-1}}^{w G_1 w^{-1}} (w \cdot \lambda) \\
\cdots \to \bigoplus_{w \in W^{(i)}} \Ind_{\Bbar \cap w G_1 w^{-1}}^{w G_1 w^{-1}} (w \cdot \lambda) \to \cdots \to \Ind_{\Bbar \cap w G_1 w^{-1}}^{w G_1 w^{-1}} (w_0 \cdot \lambda) \to 0
\end{multline*}
by Lemma~\ref{iwahorifac} and the results of \S\ref{slocan}. Letting $G_1$ act via $G_1 \to w G_1 w^{-1}$, taking the direct sum of these exact sequences over all $w \in W$ and using the isomorphism of $G_1$-representations $\Ind_{\Bbar}^{G} (\nu) \cong \bigoplus_{w \in W} \Ind_{\Bbar \cap w G_1 w^{-1}}^{w G_1 w^{-1}} (\nu)$ and its smooth analogue we get the required exact sequence, but only as an exact sequence of $G_1$-representations. It remains to show that the maps are $G$-equivariant.

First consider $d_{-1}: V \tensor \smInd_{\Bbar}^{G} (\mathbf{1}) \to \Ind_{\Bbar}^{G} (\lambda)$. Given $v \tensor f \in V \tensor \smInd_{\Bbar}^{G} (\mathbf{1})$ we construct $d_{-1} (v \tensor f)$ as follows. First we send $v \tensor f$ to
\[
(wv \tensor wf|_{w G_1 w^{-1}})_{w \in W} \in \bigoplus_{w \in W} V \tensor \smInd_{\Bbar \cap w G_1 w^{-1}}^{w G_1 w^{-1}} (\mathbf{1}).
\]
We then apply the maps
\begin{align*}
V \tensor \smInd_{\Bbar \cap w G_1 w^{-1}}^{w G_1 w^{-1}} (\mathbf{1}) \to \Ind_{\Bbar \cap w G_1 w^{-1}}^{w G_1 w^{-1}} (\lambda) && v \tensor f \mapsto \phi(v)|_{w G_1 w^{-1}} f
\end{align*}
where $\phi$ is the $G$-equivariant isomorphism from $V$ to the algebraic induction of $\lambda$ from $\Bbar$ to $G$. This gives us $(\phi(wv)(wf)|_{w G_1 w^{-1}})_{w \in W}$, which can be expressed as $(w (\phi(v) f)|_{w G_1 w^{-1}})_{w \in W}$. Applying the inverse of the isomorphism $\Ind_{\Bbar_0}^{G_0} (\lambda) \cong \bigoplus_{w \in W} \Ind_{\Bbar \cap w G_1 w^{-1}}^{w G_1 w^{-1}} (\lambda)$ we get that $d_{-1} (v \tensor f) = \phi(v) f$ and hence $d_{-1}$ is $G$-equivariant.

The $G$-equivariance of $d_{i}: \bigoplus_{w \in W^{(i)}} \Ind_{\Bbar}^{G} (w \cdot \lambda) \to \bigoplus_{w \in W^{(i + 1)}} \Ind_{\Bbar}^{G} (w \cdot \lambda)$ for $i \geq 0$ follows easily from Lemma~\ref{psiGequiv} and the fact that the maps $\bigoplus_{w \in W^{(i)}} \Ind_{\Bbar \cap w G_1 w^{-1}}^{w G_1 w^{-1}} (w \cdot \lambda) \to \bigoplus_{w \in W^{(i)}} \Ind_{\Bbar \cap w G_1 w^{-1}}^{w G_1 w^{-1}} (w \cdot \lambda)$ are constructed from maps of the form $(u_{\theta_{\alpha, w \cdot \lambda}})_{L}$.
\end{proof}

\section{Analytic principal series for subgroups with an Iwahori factorisation} \label{san}

Let $G_1$ be an open compact subgroup of $G$ which admits an Iwahori factorisation, and such that there is a locally analytic isomorphism between $N_1$ and the $L$-points of a rigid analytic space which is compatible with all charts of $N$. Let $\nu: T_1 \to K^{\times}$ be a locally analytic character. We extend it to a character $\nu: \Bbar_1 \to K^{\times}$ using $\Bbar \to \Bbar / \Nbar \cong T$.

\begin{definition}
The \textbf{analytic principal series} associated to $G_1$ and $\nu$ is
\[
\anInd_{\Bbar_1}^{G_1} (\nu) = \{ f \in \Ind_{\Bbar_1}^{G_1} (\nu): f \text{ is analytic on } N_1 \}.
\]
\end{definition}

From now on we assume that $\nu$ comes from restricting an element of $X(\mathbf{T})$, as this is the case we are interested in.

The action of $\env(\g)$ on $\Ind_{\Bbar_1}^{G_1} (\nu)$ preserves $\anInd_{\Bbar_1}^{G_1} (\nu)$ because the right regular action of $\g$ on $\mathcal{C}^{\la}(G, K)$ is via differential operators, which preserve the property of being analytic on $N_1$. We use this to give $\anInd_{\Bbar_1}^{G_1} (\nu)$ an action of $\env(\g)$.

\begin{lemma} \label{ManInd}
The action of $M$ on $\Ind_{\Bbar_1}^{G_1} (\nu)$ preserves $\anInd_{\Bbar_1}^{G_1} (\nu)$.
\end{lemma}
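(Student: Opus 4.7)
The plan is to exploit the decomposition $M = G_1 T^{-} G_1$ and reduce the claim to separately checking that $G_1$ and $T^{-}$ preserve analyticity on $N_1$ under the right regular action.

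First I would dispose of $T^{-}$, which is the easy case. For $t \in T^{-}$ and $n \in N_1$, the defining condition $t^{-1} N_1 t \subseteq N_1$ gives $nt = t \cdot (t^{-1} n t)$ with $t^{-1} n t \in N_1$, so
\[
(tf)(n) = f(nt) = \nu(t) f(t^{-1} n t).
\]
Conjugation by $t$ is (the restriction of) an algebraic self-map of $\mathbf{N}$ defined over $L$, hence is a rigid analytic morphism on the rigid analytic ball corresponding to $N_1$. Precomposing $f \in \anInd_{\Bbar_1}^{G_1}(\nu)$ with this morphism preserves analyticity on $N_1$, which settles the $T^{-}$ case.

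For $g_1 \in G_1$ I would use the Iwahori factorisation to decompose each $n g_1 \in N_1 g_1 \subseteq G_1 = \Nbar_1 T_1 N_1$ uniquely as $n g_1 = \bar{n}(n) \, t(n) \, n'(n)$ with $\bar{n}(n) \in \Nbar_1$, $t(n) \in T_1$, $n'(n) \in N_1$. Then
\[
(g_1 f)(n) = \nu(t(n)) \, f(n'(n)),
\]
so analyticity of $g_1 f$ on $N_1$ reduces to checking that the maps $t$, $n'$ (and $\nu \circ t$) are analytic on $N_1$.

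The main obstacle is precisely that last point: the Iwahori factorisation is stated as an isomorphism of $L$-analytic manifolds, whereas \emph{analytic on $N_1$} means analytic for the chosen rigid analytic structure (a ball) on $N_1$. I would address this by observing that $G_1$ carries the rigid analytic group structure induced from the Iwahori factorisation, under which the multiplication map $\Nbar_1 \times T_1 \times N_1 \overset{\sim}{\to} G_1$ and the right translation $n \mapsto n g_1$ are rigid analytic. The coordinate maps $\bar n, t, n'$ are then the components of the composition of two rigid analytic morphisms, hence analytic in the required sense; this in turn forces $g_1 f \in \anInd_{\Bbar_1}^{G_1}(\nu)$, completing the proof.
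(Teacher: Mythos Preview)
Your approach works in outline but is more laborious than the paper's, and the justification in the $G_1$ case has a gap. The paper does not split $M$ into $G_1$ and $T^{-}$ at all; instead it passes to the ambient induced representation via $\Ind_{\Bbar_1}^{G_1}(\nu) \cong \Ind_{\Bbar}^{G}(\nu)(N_1)$, observes that (since $\nu$ is algebraic) the image of $\anInd_{\Bbar_1}^{G_1}(\nu)$ consists exactly of those functions analytic on $\Bbar G_1 = \Bbar N_1$ and zero elsewhere, and then notes that $\Bbar G_1 M \subseteq \Bbar G_1$. Right translation by any $m \in M$ is the restriction of an algebraic morphism of $\mathbf{G}$, hence rigid analytic, so the subspace is preserved by all of $M$ at once. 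This bypasses the Iwahori factorisation entirely and avoids your case split.

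The problematic step in your argument is the claim that ``$G_1$ carries the rigid analytic group structure induced from the Iwahori factorisation, under which multiplication and right translation are rigid analytic.'' The Iwahori factorisation is only assumed to be an isomorphism of locally $L$-analytic manifolds, so neither its inverse nor the group law on $G_1$ is \emph{a priori} rigid analytic for that structure; you are asserting exactly what needs to be proved. The clean fix is essentially the paper's idea applied pointwise: since $G_1 \subseteq \Bbar N$ and the big-cell isomorphism $\Bbar N \xrightarrow{\sim} \Nbar \times T \times N$ is an isomorphism of varieties, your coordinate maps $\bar n(\cdot), t(\cdot), n'(\cdot)$ on $G_1$ are restrictions of algebraic morphisms, and precomposing with the algebraic map $n \mapsto n g_1$ shows they are rigid analytic on $N_1$. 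Once you argue this way, the decomposition $M = G_1 T^{-} G_1$ is no longer needed.
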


\begin{proof}
Consider the image of $\anInd_{\Bbar_1}^{G_1} (\nu)$ under the isomorphism $\Ind_{\Bbar_1}^{G_1} (\nu) \cong \Ind_{\Bbar}^{G} (\nu) (N_1)$. Since $\nu$ is analytic it consists of all functions which are analytic on $\Bbar G_1 = \Bbar N_1$ and $0$ outside it. Since $\Bbar G_1 M \subseteq \Bbar G_1$, this is preserved by the action of $M$.
\end{proof}

We use this to give $\anInd_{\Bbar_1}^{G_1} (\nu)$ an action of $M$.

\begin{theorem} \label{anmainthm}
The sequence \eqref{laES} gives an exact sequence of $M$-representations
\[
0 \longrightarrow V \longrightarrow \anInd_{\Bbar_1}^{G_1} (\lambda) \longrightarrow \bigoplus_{w \in W^{(1)}} \anInd_{\Bbar_1}^{G_1} (w \cdot \lambda).
\]
\end{theorem}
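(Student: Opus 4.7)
The plan is to deduce this theorem from Lemma~\ref{anESlemma} applied with $X = N_1$, after suitably identifying the analytic principal series with spaces of analytic functions on $N_1$. By assumption, $N_1$ is locally analytically isomorphic to the $L$-points of a rigid analytic (affinoid ball) space in a way compatible with all charts of $N$, so Lemma~\ref{anESlemma} indeed applies with $X = N_1$.

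First I would unwind the identifications. Via $\Ind_{\Bbar_1}^{G_1}(\nu) \cong \Ind_{\Bbar}^{G}(\nu)(N_1) \cong \mathcal{C}^{\la}_{\cs}(N, K_{\nu})(N_1)$, and using that $\nu$ is analytic on $\Bbar$, the subspace $\anInd_{\Bbar_1}^{G_1}(\nu)$ corresponds to the subspace of $\mathcal{C}^{\la}_{\cs}(N, K_{\nu})(N_1)$ consisting of functions analytic on $N_1$; under $\eqref{suppN}$ this is precisely $\mathcal{C}^{\an}(N_1, K_{\nu})$ (extended by $0$ outside $N_1$). Under this identification, Lemma~\ref{psiproperties}.\ref{parttwo} together with Lemma~\ref{psiproperties}.\ref{partfour} shows that $d_0$ from \eqref{laES} restricts to a map $\anInd_{\Bbar_1}^{G_1}(\lambda) \to \bigoplus_{w \in W^{(1)}} \anInd_{\Bbar_1}^{G_1}(w \cdot \lambda)$. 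For $d_{-1}$, note that if $\sum v_i \tensor f_i \in V \tensor_{K} \mathcal{C}^{\sm}_{\cs}(N, K)$ has image supported in $N_1$ and analytic on $N_1$, then the image must be a polynomial function on $N_1$ (since a locally constant function that is analytic on $N_1$ is constant there), so the restricted first map is exactly $v \mapsto \phi(v)|_{N_1}$, with $\phi: V \hookrightarrow \mathcal{C}^{\pol}(N, K_{\lambda})$ coming from \eqref{polES}. Exactness of the three-term sequence is then precisely the content of Lemma~\ref{anESlemma} for $X = N_1$.

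It remains to verify $M$-equivariance. For the map $\anInd_{\Bbar_1}^{G_1}(\lambda) \to \bigoplus_{w \in W^{(1)}} \anInd_{\Bbar_1}^{G_1}(w \cdot \lambda)$, Lemma~\ref{Mequivariant} shows the restrictions of $\theta_{\alpha, \lambda}^{\la}$ to $\mathcal{C}^{\la}_{\cs}(N, K_{\lambda})(N_1)$ are $M$-equivariant, while Lemma~\ref{ManInd} guarantees $M$ preserves the analytic subspace, so the restriction to $\anInd_{\Bbar_1}^{G_1}$ is $M$-equivariant. For $V \to \anInd_{\Bbar_1}^{G_1}(\lambda)$, I would identify it with the composition of the $G$-equivariant algebraic embedding $V \hookrightarrow \Ind_{\Bbar}^{G}(\lambda)$ (whose image lies in the $M$-stable subspace $\Ind_{\Bbar}^{G}(\lambda)(\Bbar G_1)$ because $\Bbar G_1 M \subseteq \Bbar G_1$) with projection onto the corresponding summand, making it $M$-equivariant.

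The main obstacle, as I see it, is the bookkeeping to confirm that the first three terms of \eqref{laES}, when restricted to the analytic subspace via the identifications above, yield precisely the sequence stated in the theorem, in particular that $V \tensor_{K} \mathcal{C}^{\sm}_{\cs}(N, K)$ becomes just $V$ and not a larger space. This collapse is explained by the analyticity condition forcing the smooth factor to be constant on $N_1$; once that is clear, everything reduces to invoking Lemma~\ref{anESlemma} plus the $M$-equivariance already established in \S\ref{slocan}.
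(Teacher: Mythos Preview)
Your proposal is correct and follows essentially the same approach as the paper: apply Lemma~\ref{anESlemma} with $X=N_1$ to obtain exactness via the identification $\anInd_{\Bbar_1}^{G_1}(\nu)\cong\mathcal{C}^{\an}(N_1,K_\nu)$, and then deduce $M$-equivariance from the fact that the maps are restrictions of the $M$-equivariant maps in Theorem~\ref{laindEScor} to $M$-stable subspaces (Lemma~\ref{ManInd}). Your extra bookkeeping about the collapse of $V\tensor_K\mathcal{C}^{\sm}_{\cs}(N,K)$ to $V$ and your separate treatment of $M$-equivariance for the first map are more explicit than the paper's two-sentence proof, but the underlying argument is the same.
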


\begin{proof}
Setting $X = N_1$ and using the isomorphism $\anInd_{\Bbar_1}^{G_1} (\nu) \cong \mathcal{C}^{\an} (N_1, K_{\nu})$, we showed this sequence was exact in  Lemma~\ref{anESlemma}. The maps are $M$-equivariant because they are the restriction of maps from the exact sequence in Theorem~\ref{laindEScor}, which are $M$-equivariant, and we have shown the spaces are $M$-stable.
\end{proof}

The analogue of the whole of \eqref{laES} with analytic principal series is a chain complex but is not in general exact.

\section{Applications to overconvergent $p$-adic automorphic forms I} \label{sappi}

In this section we outline the definition of spaces of overconvergent $p$-adic automorphic forms given in \cite{Chenevierfern} and construct an exact sequence between certain such spaces. This has already been done in \cite{Chenevierfern} but is included here for completeness.

Let $F$ be a number field. Let $\mathbf{U}$ be an algebraic group defined over $F$ such that $\mathbf{U}(F_v)$ is compact for all infinite places $v$ of $F$ and $\mathbf{U}(F_v) \cong GL_n(\Q_p)$ for all places $v$ of $F$ dividing $p$. Let $S_p$ denote the set of all places of $F$ dividing $p$ and fix an isomorphism $\mathbf{U}(F_v) \cong GL_n(\Q_p)$ for all $v \in S_p$.

Let $\mathbf{G}$ be the algebraic group $\mathbf{GL}_n^{S_p}$ defined over $\Q_p$. Let $G = \mathbf{G}(\Q_p)$, $B \subseteq G$ the Borel consisting of lower triangular matrices and $T \subseteq G$ the maximal torus consisting of diagonal matrices. Define $G_1 \subseteq G$ to be the Iwahori subgroup of $GL_n(\Z_p)$ coming from $\Bbar$. (Because of differing conventions what we call $\mathbf{B}$ is called $\Bbar$ in \cite{Chenevierfern} and vice versa.)

Let $\mathbb{A}_f$ denote the finite ad\`eles and $\mathbb{A}_f^{S_p}$ the finite ad\`eles away from $v \in S_p$. Fix an open compact subgroup $\mathscr{U}$ of $\mathbf{U}( \mathbb{A}_f )$ of the form $G_1 \times \mathscr{U}^{S_p}$ where $\mathscr{U}^{S_p}$ is an open compact subgroup of $\mathbf{U}(\mathbb{A}_f^{S_p})$. Consider the functor $F$ from representations of $M = G_1 T^{-} G_1$ over $\Q_p$ to $\Q_p$-vector spaces given by setting $F(A)$ to be the set of all functions $\phi: \mathbf{U}(F) \backslash \mathbf{U}(\mathbb{A}_{F, f}) \to A$ such that $\phi (g x) = (\prod_{v | p} x_v)^{-1} \phi(g)$ for all $g \in \mathbf{U}(\mathbb{A}_{F, f})$ and $x \in \mathscr{U}$. This is an exact functor.

For $\chi \in X(\mathbf{T})$ Chenevier defines a representation $\mathcal{C}_{\chi}$ of $M$ which can easily be shown to be isomorphic to $\anInd_{\Bbar_1}^{G_1} (-\chi)$. (Recall the group operation on $X(\mathbf{T})$ is written additively, so $(-\chi)(t) = \chi(t)^{-1}$.) He defines the space of automorphic forms of $\mathbf{U}$ of weight $\chi$ and level $\mathscr{U}$ to be $F( \mathcal{C}_{\chi})$.

\begin{theorem} \label{ChenevierES}
Let $V$ be a finite dimensional irreducible algebraic representation of $\mathbf{G}$, with lowest weight $\lambda \in X(\mathbf{T})$. We have an exact sequence
\[
0 \to F(V^{*}) \to F( \mathcal{C}_{\lambda}) \to \bigoplus_{w \in W^{(1)}} F( \mathcal{C}_{w \cdot \lambda})
\]
\end{theorem}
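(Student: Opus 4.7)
The strategy is to derive the exact sequence by applying the exact functor $F$ to a suitable exact sequence of $M$-representations obtained from Theorem~\ref{anmainthm}.

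Since $V$ is the irreducible algebraic $\mathbf{G}$-representation of lowest weight $\lambda$, its dual $V^{*}$ is irreducible of highest weight $-\lambda$, and $-\lambda$ is dominant (because $\lambda$ is anti-dominant). Theorem~\ref{anmainthm} applied to $V^{*}$ therefore yields an exact sequence of $M$-representations
\[
0 \longrightarrow V^{*} \longrightarrow \anInd_{\Bbar_1}^{G_1}(-\lambda) \longrightarrow \bigoplus_{w \in W^{(1)}} \anInd_{\Bbar_1}^{G_1}(w \cdot (-\lambda)).
\]

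Next, I would use the identification $\mathcal{C}_{\chi} \cong \anInd_{\Bbar_1}^{G_1}(-\chi)$ recalled at the start of the section to rewrite each term. The middle term becomes $\mathcal{C}_{\lambda}$ immediately, and for the right-hand summands one checks that $-(w \cdot \lambda)$ in Chenevier's dot-action convention coincides with $w \cdot (-\lambda)$ in the convention used in the main body of this paper. This works out because Chenevier and Jones use opposite Borels, so their respective half-sums of positive roots differ by a sign, and this sign flip is precisely what makes the two dot actions compatible, giving $\anInd_{\Bbar_1}^{G_1}(w \cdot (-\lambda)) \cong \mathcal{C}_{w \cdot \lambda}$.

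Finally, since $F$ is exact by construction, applying it to the rewritten sequence produces the desired exact sequence
\[
0 \to F(V^{*}) \to F(\mathcal{C}_{\lambda}) \to \bigoplus_{w \in W^{(1)}} F(\mathcal{C}_{w \cdot \lambda}).
\]
The proof is essentially formal once Theorem~\ref{anmainthm} and the exactness of $F$ are in hand; the only real care required is the bookkeeping of dot-action conventions when translating between the induced-representation description of Jones and the $\mathcal{C}$-space description of Chenevier.
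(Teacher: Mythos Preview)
Your proposal is correct and follows the same approach as the paper: apply Theorem~\ref{anmainthm} to $V^{*}$ (which has highest weight $-\lambda$) and then apply the exact functor $F$. The paper's proof is a two-sentence version of what you wrote; your additional care with the dot-action bookkeeping between the two Borel conventions is exactly the point the paper leaves implicit and addresses only in the remark following the theorem.
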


\begin{proof}
Consider the exact sequence in Theorem~\ref{anmainthm} for $V^{*}$, which has highest weight $-\lambda$. Applying the functor $F$ we get the required exact sequence.
\end{proof}

Note that when we talk about highest and lowest weights we mean with respect to the choice of positive roots given by $\mathbf{B}$. Since Chenevier takes our $\mathbf{\Bbar}$ for his choice of positive roots, in his terminology $V$ has highest weight $\lambda$.

Chenevier calls $F(V^{*})$ the space of classical overconvergent $p$-adic automorphic forms.

\section{Applications to overconvergent $p$-adic automorphic forms II} \label{sappii}

In this section we outline the definition of spaces of overconvergent $p$-adic automorphic forms given in \cite{Loeffler} and construct an exact sequence involving them.

Choose a number field $F$ and a prime $\mathfrak{p}$ of $F$. Let $\mathbf{H}$ be a connected reductive algebraic group defined over $F$ such that $\mathbf{H}(F \tensor_{\Q} \R)$ is compact modulo centre. Write $H_{\infty}^{0}$ for the identity component of $\mathbf{H}(F \tensor_{\Q} \R)$. Let $\mathbb{A}_{F}$ denote the ad\`eles of $F$, $\mathbb{A}_{F, f}$ the finite ad\`eles of $F$ and $\mathbb{A}_{F, f}^{(\mathfrak{p})}$ the finite ad\`eles of $F$ away from $\mathfrak{p}$. Let $L = F_{\mathfrak{p}}$ and let $\mathbf{G}$ be the base change of $\mathbf{H}$ to $L$. Assume that $\mathbf{G}$ is quasi-split.

We are now in the situation of \cite{Loeffler}, with the added assumption that the parabolic subgroup $P \subseteq \mathbf{H}(F_{\mathfrak{p}})$ is a Borel. Let us now outline the definition of the space of overconvergent $p$-adic automorphic forms for $\mathbf{H}$ used in \cite{Loeffler}. In the terminology of \cite{Loeffler}, we consider only the case where $X$ in arithmetic weight space is in fact the singleton $\mathbf{1}$ consisting of the trivial weight and $V$ is a one-dimensional representation of $T_1$ of the form $K_{\mu}$ for $\mu \in X(\mathbf{T})$ which is an arithmetical character. The field called $E$ in \cite{Loeffler} we call $K$, the group called $G_0$ we call $G_1$ and the monoid called $\mathbb{I}$ we call $M$. We put the extra condition on $G_1$ that if $t \in T$ such that $| \alpha (t)| < 1$ for all $\alpha \in \Delta$ then $t N_1 t^{-1} \subseteq N_1$ and $t^{-1} \Nbar_1 t \subseteq \Nbar_1$.

A representation of $M = G_1 T^{-} G_1$ over $K$ is said to be arithmetical if there is a finite index subgroup in $\mathbf{Z_H}(\mathfrak{o}_F) \subseteq M$ which acts trivially.

For an arithmetical representation $A$ of $M$ over $K$ we define $\mathcal{L} (A)$ to be the set of all functions $\phi: \mathbf{H}(F) \backslash \mathbf{H}(\mathbb{A}_{F}) \to A$ such that there exists some open subset $U \subseteq \mathbf{H}(\mathbb{A}_{F, f}^{(\mathfrak{p})}) \times G_1$ (which can depend on $\phi$) with $\phi(h u) = u_{\mathfrak{p}}^{-1} \phi(h)$ for all $u \in U \times H_{\infty}^{0}$ and $h \in \mathbf{H}(\mathbb{A}_{F})$.

For a sufficiently large integer $k$ the space of $k$-overconvergent $p$-adic automorphic forms $M(e, \mathbf{1}, V, k)$  for $\mathbf{H}$ with weight $( \mathbf{1}, K_{\mu})$ and type $e$ is defined to be $e \mathcal{L} (\mathcal{C}( \mathbf{1}, K_{\mu}, k))$. Here $e$ is an idempotent in a certain Hecke algebra $\mathcal{H}^{+}(\mathcal{G})$ which corresponds to the tame level -- see \cite{Loeffler} for more details, and for the definition of $\mathcal{C}( \mathbf{1}, K_{\mu}, k)$.

For $k$ large enough that $\mathcal{C}( \mathbf{1}, K_{\mu}, k)$ is defined there is a natural map $\mathcal{C}( \mathbf{1}, K_{\mu}, k) \to \mathcal{C}( \mathbf{1}, K_{\mu}, k + 1)$, so by functoriality there is a natural map $e \mathcal{L} (\mathcal{C}( \mathbf{1}, K_{\mu}, k)) \to e \mathcal{L} (\mathcal{C}( \mathbf{1}, K_{\mu}, k + 1))$ (which is injective with dense image). We make the following definition.

\begin{definition}
The \textbf{space $M(e, K_{\mu})$ of overconvergent $p$-adic automorphic forms} of weight $K_{\mu}$ and type $e$ is defined to be $\varinjlim_k M(e, \mathbf{1}, K_{\mu}, k))$.
\end{definition}

In the proof of Proposition~3.10.1 in \cite{Loeffler} we see that $\varinjlim_k M(e, \mathbf{1}, K_{\mu}, k))$ is isomorphic to $e \mathcal{L} (\Ind_{\Bbar_1}^{G_1} (\mu))$, so we have $M(e, K_{\mu}) = e \mathcal{L} (\Ind_{\Bbar_1}^{G_1} (\mu))$.

We define the classical subspace $M(e, K_{\mu})^{\cl}$ to be $e \mathcal{L} (\Ind_{\Bbar_1}^{G_1} (\mu)^{\cl})$, where $\Ind_{\Bbar_1}^{G_1} (\mu)^{\cl}$ is the intersection of $\Ind_{\Bbar_1}^{G_1} (\mu)$ with the image of $\mathcal{C}^{\pol} (G_1, K) \tensor_{K} \mathcal{C}^{\sm} (G_1, K)$ under the natural multiplication map, i.e. the subrepresentation of locally polynomial vectors in $\Ind_{\Bbar_1}^{G_1} (\mu)$. In particular, $\Ind_{\Bbar_1}^{G_1} (\lambda)^{\cl} = V \tensor_{K} \smInd_{\Bbar_1}^{G_1} (\mathbf{1})$.

\begin{theorem} \label{LoefflerES}
If $\lambda \in X(\mathbf{T})$ is a dominant arithmetical character then we have an exact sequence
\begin{multline*}
0 \to M(e, K_{\lambda})^{\cl} \to M(e, K_{\lambda}) \to \bigoplus_{w \in W^{(1)}} M(e, K_{w \cdot \lambda}) \to \\
\cdots \to \bigoplus_{w \in W^{(i)}} M(e, K_{w \cdot \lambda}) \to \cdots \to M(e, K_{w_0 \cdot \lambda}) \to 0
\end{multline*}
\end{theorem}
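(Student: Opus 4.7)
The plan is to apply the functor $e\mathcal{L}(-)$ to the exact sequence of $M$-representations supplied by Theorem~\ref{laindEScor}, and then invoke the identifications
\[
M(e, K_{\mu}) = e\mathcal{L}(\Ind_{\Bbar_1}^{G_1}(\mu)) \qquad \text{and} \qquad M(e, K_{\lambda})^{\cl} = e\mathcal{L}(V \tensor_{K} \smInd_{\Bbar_1}^{G_1}(\mathbf{1}))
\]
recorded at the end of \S\ref{sappii}. For this to work I must verify two things: that every term in Theorem~\ref{laindEScor} is an arithmetical $M$-representation, so that $\mathcal{L}$ is defined on it, and that $e\mathcal{L}(-)$ is an exact functor on the category of arithmetical representations.

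For arithmeticity, I would argue as in Theorem~\ref{laESred}: the centre $\mathbf{Z_H}$ sits inside $T$, and it acts on $\Ind_{\Bbar_1}^{G_1}(w \cdot \lambda)$ through the character $(w \cdot \lambda)|_{\mathbf{Z_H}}$. Since the Weyl group acts trivially on the centre, $(w \cdot \lambda)|_{\mathbf{Z_H}} = \lambda|_{\mathbf{Z_H}}$ for every $w \in W$. Because $V$ has extremal weight $\lambda$, the same computation shows that $\mathbf{Z_H}$ acts on $V \tensor_{K} \smInd_{\Bbar_1}^{G_1}(\mathbf{1})$ through $\lambda|_{\mathbf{Z_H}}$ as well. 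The hypothesis that $\lambda$ is an arithmetical character then gives a finite-index subgroup of $\mathbf{Z_H}(\mathfrak{o}_F)$ acting trivially on every term, as required.

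For exactness of $\mathcal{L}$, the crucial input is that $\mathbf{H}(F \tensor_{\Q} \R)$ is compact modulo centre, so for every compact open $U \subseteq \mathbf{H}(\mathbb{A}_{F,f})$ the double coset space $\mathbf{H}(F) \backslash \mathbf{H}(\mathbb{A}_{F}) / (U \times H_{\infty}^{0})$ is finite. On an arithmetical representation $A$, $\mathcal{L}(A)$ is therefore a filtered colimit (over shrinking $U$ of the required product form) of finite direct sums indexed by double-coset representatives, each summand being the subspace of $A$ fixed by a finite quotient of the corresponding arithmetic stabilizer. Taking invariants under a finite group in characteristic zero is exact, filtered colimits are exact, and post-composition with the idempotent $e$ preserves exactness; hence $e\mathcal{L}(-)$ is an exact functor on arithmetical $M$-representations. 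Applying it to the exact sequence of Theorem~\ref{laindEScor} and using the identifications above yields the desired sequence.

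The main obstacle is the exactness of $\mathcal{L}$, specifically the surjectivity on the rightmost term at each stage: left exactness is automatic from the formula for $\mathcal{L}(A)$, but right exactness needs the finiteness of the double coset space together with the fact that arithmeticity lets one lift values across the surjection $B \twoheadrightarrow C$ compatibly with the stabilizer action. Once this is in place, the arithmeticity bookkeeping and the application of the Hecke idempotent $e$ are formal.
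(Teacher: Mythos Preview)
Your proposal is correct and follows essentially the same strategy as the paper: verify arithmeticity of every term in the sequence of Theorem~\ref{laindEScor}, then apply the exact functor $e\mathcal{L}$. The only difference is in the justification of exactness of $e\mathcal{L}$: the paper appeals to Corollary~3.3.5 of \cite{Loeffler}, which identifies $e\mathcal{L}$ with the image of an idempotent in a finite-dimensional matrix algebra over $K[G_1]$, whereas you sketch the underlying argument directly (finite double coset spaces, invariants under finite groups in characteristic zero, filtered colimits). These are the same argument in different packaging.
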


\begin{proof}
We first show that all the terms in the exact sequence in Theorem~\ref{laindEScor} are arithmetical. In the proof of Theorem~\ref{laESred} we showed that $w \cdot \lambda|_{\mathbf{Z_G}} = \lambda|_{\mathbf{Z_G}}$ for all $w \in W$. As $K_{\lambda}$ is arithmetical and $\mathbf{Z_H}(\mathfrak{o}_F) \subseteq \mathbf{Z_G}(L)$, we see that $K_{w \cdot \lambda}$ is arithmetical for all $w \in W$. Since $\mathbf{Z_H}(\mathfrak{o}_F)$ acts on $\Ind_{\Bbar_1}^{G_1} (\mu)$ via the same character that it acts on $K_{\mu}$, it follows that $\Ind_{\Bbar_1}^{G_1} (w \cdot \lambda)$ is arithmetical for all $w \in W$. Finally, $V \tensor_{K} \smInd_{\Bbar_1}^{G_1} (\mathbf{1})$ injects into an arithmetical representation and is therefore also arithmetical.

As explained in the proof of Corollary~3.3.5 in \cite{Loeffler}, the functor $e \mathcal{L}$ on the category of arithmetic representations is the same as taking the image of an idempotent in a finite-dimensional matrix algebra over the group ring $K[G_1]$. It is hence exact, and applying it to the exact sequence in Theorem~\ref{laindEScor} we get the required exact sequence.
\end{proof}

The maps in the exact sequence in Theorem~\ref{laindEScor} are made up of maps of the form $(u_{\psi})_L : \Ind_{\Bbar_1}^{G_1} (w \cdot \lambda) \to \Ind_{\Bbar_1}^{G_1} (s_{\alpha} w \cdot \lambda)$. Given one of these maps, we define $\theta_{\alpha, w \cdot \lambda}^{\text{aut}}$ to be $e \mathcal{L} ((u_{\psi})_L) : M(e, K_{w \cdot \lambda}) \to M(e, K_{s_{\alpha} w \cdot \lambda})$. All the maps in the exact sequence in Theorem~\ref{LoefflerES} after the first are made up from these $\theta_{\alpha, w \cdot \lambda}^{\text{aut}}$. In particular, $M(e, K_{\lambda}) \to \bigoplus_{w \in W^{(1)}} M(e, K_{w \cdot \lambda})$ is $\bigoplus_{\alpha \in \Delta} \theta_{\alpha, \lambda}^{\text{aut}}$, from which we deduce that for any dominant weight $\lambda \in X(\mathbf{T})$, $f \in M(e, K_{\lambda})$ is in $M(e, K_{\lambda})^{\cl}$ if and only if $f \in \ker \theta_{\alpha, \lambda}^{\text{aut}}$ for all $\alpha \in \Delta$.

\section*{Acknowledgements}

I would like to thank Ga\"{e}tan Chenevier, David Loeffler, James Newton and my supervisor Kevin Buzzard.







\comment{
\bibliographystyle{plain}
\bibliography{../bib}
}




\end{document}